\title{Connectoids I: a universal end space theory}
\author{Nathan Bowler
			\and
			Florian Reich}
\address{Universit\"at Hamburg, Department of Mathematics, Bundesstrasse 55 (Geomatikum), 20146 Hamburg, Germany}
\email{\{nathan.bowler, florian.reich\}@uni-hamburg.de}
\keywords{connectivity, end, direction, normal tree, infinite graph, infinite digraph, infinite hypergraph, finitary matroid, bidirected graph}
\definecolor{CornflowerBlue}{rgb}{0.39, 0.58, 0.93}
\let\polishlcross=\l
\def\l{\ifmmode\ell\else\polishlcross\fi}
\let\emptyset=\varnothing
\let\theta=\vartheta
\let\rho=\varrho
\let\phi=\varphi
\def\NN{\mathbb N}
\def\cC{{\mathcal C}}
\def\cP{{\mathcal P}}
\def\cK{{\mathcal K}}
\newcommand{\Set}[1]{{\left\lbrace {#1} \right\rbrace}}
\def\set#1:#2{\Set{{#1} \colon {#2}}}
\newcommand{\Up}[1]{\lfloor #1 \rfloor}
\newcommand{\OUp}[1]{\mathring{\lfloor #1 \rfloor}}
\newcommand{\Down}[1]{\lceil #1 \rceil}
\newcommand{\ODown}[1]{\mathring{\lceil #1 \rceil}}
\theoremstyle{plain}
\newtheorem{thm}{Theorem}[section]
\newtheorem{prop}[thm]{Proposition}
\newtheorem{cor}[thm]{Corollary}
\newtheorem{lemma}[thm]{Lemma}
\newtheorem{problem}[thm]{Problem}
\theoremstyle{definition}
\begin{document}

\begin{abstract}    
    In this series we introduce and investigate the concept of \emph{connectoids}, which captures the connectivity structure of various discrete objects like undirected graphs, directed graphs, bidirected graphs, hypergraphs or finitary matroids.
    
    In this paper we develop a universal end space theory based on connectoids: the end spaces of connectoids unify the existing end spaces of undirected and directed graphs and establish end spaces for bidirected graphs, hypergraphs and finitary matroids.
    
    The main result shows that the tangle-like description of ends in undirected graphs, called \emph{directions}, extends to connectoids: there is a one-to-one correspondence between the ``directions'' of a connectoid and its ends.
    Furthermore, we generalise normal trees of undirected graphs to connectoids and show that normal trees represent the ends of a connectoid as they do for undirected graphs.
\end{abstract}

\maketitle

\section{Introduction}
Ends of undirected graphs are one of the most important concepts in infinite graph theory.
Halin gave the common definition of ends~\cite{halin1964unendliche}: An \emph{end} of an undirected graph is an equivalence class of rays that are inseparable by deletion of finite vertex sets.
More informally, one can think of an end as a limit point to which rays converge.
Diestel showed \cite{diestel2017ends} that ends are a special case of the infinite order tangles, as introduced by Robertson and Seymour \cite{robertson1991graph}.
Ends have several notable applications, such as for Diestel and K\"uhn's compactification of locally finite graphs \cite{diestel2004infinite}, the existence of infinite grid minors \cite{halin1965maximalzahl},
the study of ubiquity \cites{bowler2018ubiquity, bowler2018ubiquity2, bowler2020ubiquity, bowler2015edge} or the characterisation of vertex-transitive graphs  \cite{mohar1991some}*{Proposition~6.1}.

For directed graphs B\"urger and Melcher recently introduced a notion of ends \cite{burger2020ends}.
Their seminal results \cites{burger2020ends, burger2020ends2, burger2020ends3} show that ends also play a fundamental role in the directed setting.

In this paper we develop a universal end space theory.
This theory unifies the existing end spaces of undirected graphs and directed graphs under omitting limit edges.
Furthermore, this universal end space theory provides a unified notion of ends for a variety of discrete objects that lacked a notion of ends and establishes in this way a fundamental tool for the investigation of these objects.

The basis for this universal end space theory is an abstract description of connectivity:
We consider a set $S$ and a set $\mathcal{F}$ of finite subsets of $S$ such that
\begin{enumerate}[label=(\roman*)]
    \item\label{itm:bonding_closed} for every $F, F' \in \mathcal{F}$ with $F \cap F' \neq \emptyset$ also $F \cup F'$ is an element of $\mathcal{F}$, and
    \item\label{itm:contains_singletons} $\{s\} \in \mathcal{F}$ for every $s \in S$ and $\emptyset \in \mathcal{F}$.
\end{enumerate}
We call a subset $C \subseteq S$ \emph{connected} if for every two elements $x, y \in C$ there is $F \in \mathcal{F}$ with $F \subseteq C$ and $x, y \in F$.
Let $\cC$ be the set of connected sets and call the tuple $(S, \cC)$ the \emph{connectoid induced by $\mathcal{F}$}.
Note that $\mathcal{F}$ is the set of finite connected sets.

Connectoids represent the connectivity structure of common types of graphs and the connectivity structure of finitary matroids.
For the former, let $G$ be either an undirected, directed or bidirected graph, or a hypergraph with finite edges.
Let $S:= V(G)$.
Further, let $\mathcal{F}$ be the set of vertex sets of all finite connected subgraphs of $G$, where connected refers to connectivity for undirected graphs and hypergraphs, and refers to strong connectivity for directed and bidirected graphs.
Thus in the connectoid induced by $\mathcal{F}$ a subset of $S$ is connected if and only if it is the vertex set of a (strongly) connected subgraph of $G$.
Moreover, for displaying the edge-connectivity of $G$, we set $S:= E(G)$ and let $\mathcal{F}$ be the set of edge-sets of all finite connected subgraphs of $G$.

For the latter, let $(E, \mathcal{I})$ be a finitary matroid.
Further, let $S:= E$ and let $\mathcal{F}$ be the set of ground sets of all finite connected restrictions of $(E, \mathcal{I})$. Then the connected sets of the connectoid induced by $\mathcal{F}$ are exactly the ground sets of all connected restrictions of $(E, \mathcal{I})$.

Finitary matroids contain two special cases:
An undirected graph is $2$-connected if and only if its edge set is connected in the matroid sense.
Thus the $2$-connectivity of undirected graphs is reflected by connectoids.
Furthermore, the dual connectivity structure of a graph-like continuum can be expressed in terms of connectoids since it induces a finitary matroid (see \cite{bowler2018infinite} for details).

The concept of connectoids can be considered from different perspectives.
Given a connectoid $(S, \cC)$ and a subset $S' \subseteq S$ we call a maximal connected subset of $S'$ a \emph{component} of $S'$.
This definition generalises the definition of components for all mentioned applications of connectoids.
The set $\cK(S')$ of components of $S'$ partitions $S'$:
We consider the relation $\sim_{S'}$, where $x \sim_{S'} y$ holds if there exists a finite element $F \in \mathcal{C}$ with $F \subseteq S'$ and $x, y \in F$.
Then $\sim_{S'}$ is an equivalence relation since it is reflexive by \labelcref{itm:contains_singletons}, symmetric by the definition and transitive by~\labelcref{itm:bonding_closed}.
This implies that the maximal connected subsets of $S'$ are the equivalence classes under the relation $\sim_{S'}$, which proves that the components of $S'$ partition the set $S'$.

The family of components $(\cK(S'))_{S' \subseteq S}$ characterises $(S, \cC)$:
Note that
\begin{itemize}
    \item $(\cK(S'))_{S' \subseteq S}$ is \emph{finitary}, i.e.\ for every $K \in \bigcup_{S' \subseteq S} \cK(S')$ and every $x, y \in K$ there is a finite element $K' \in \bigcup_{S' \subseteq S} \cK(S')$ such that $x, y \in K' \subseteq K$, and
    \item $(\cK(S'))_{S' \subseteq S}$ is \emph{consistent}, i.e.\ for every $\hat S \subseteq S' \subseteq S$ the partition $\cK(\hat{S})$ refines the partition $\cK(S')$ and $\{ K \in \cK(S'): K \subseteq \hat S\} \subseteq \cK( \hat S)$.
\end{itemize}
Conversely, every finitary consistent family $(\cP(S'))_{S' \subseteq S}$ of partitions induces a connectoid: The set $\Tilde{\mathcal{F}}:= \bigcup_{S' \in S^{< \infty}} \cP(S')$ satisfies \labelcref{itm:bonding_closed} by consistency and \labelcref{itm:contains_singletons} since singletons are the unique partition classes of one element subsets of $S$.
Then in the connectoid induced by $\Tilde{\mathcal{F}}$ the components of $S'$ are exactly the elements of $\cP(S')$ for every $S' \subseteq S$ by finitarity of $(\cP(S'))_{S' \subseteq S}$.

Connectoids can also be characterised as hypergraphs with finite edges:
On the one hand, a connectoid $(S, \cC)$ generates the hypergraph with vertex set $S$ and edge set $\mathcal{F}$, i.e.\ every finite connected set forms an edge.
On the other hand, the finite connected sets of a hypergraph satisfy \labelcref{itm:bonding_closed} and \labelcref{itm:contains_singletons} (see \cite{dewar2018connectivity} for more details on connectivity of hypergraphs) and therefore induce a connectoid.
In particular, the hypergraph with vertex set $S$ and edge set $\mathcal{F}$ induces $(S, \cC)$.

Before turning our attention to the definition of ends we introduce a counterpart to the graph-theoretic notion of rays, which is inspired by B\"urger and Melcher's~\cite{burger2020ends} necklaces of directed graphs. A connected set $N \in \cC$ is called a \emph{necklace} if there exists a family $(H_n)_{n \in \NN}$ of finite connected sets such that $N = \bigcup_{n \in \NN} H_n$ and $H_i \cap H_j \neq \emptyset$ holds if and only if $|i - j| \leq 1$ for every $i, j \in \NN$. We call the family $(H_n)_{n \in \NN}$ a \emph{witness} of $N$.

Given a necklace $N$ and a finite set $X \subseteq S$ there exists a component in $\cK(N \setminus X)$ containing \emph{almost all}, i.e.\ all but finitely many, elements of $N$: Let $(H_n)_{n \in \NN}$ be a witness of $N$. Then there exists $m \in \NN$ such that $H_n \cap X = \emptyset$ for every $n \geq m$. Therefore there is a component in $\cK(N \setminus X)$ that contains $\bigcup_{n \geq m} H_n$ and thus almost all elements of $N$.
We call this unique component the \emph{$X$-tail} of $N$ and prove that it is again a necklace.

Two necklaces $N$ and $N'$ are \emph{equivalent} if for every finite set $X \subseteq S$ the $X$-tails of $N$ and $N'$ are contained in the same component in $\cK( S \setminus X)$.
An equivalence class of necklaces under this relation is an \emph{end} of $(S, \cC)$.
We define \emph{$\Omega(S, \cC)$} to be the set of ends of $(S, \cC)$.
Given an end $\omega \in \Omega(S, \cC)$ and a finite set $X \subseteq S$ we let \emph{$K(X, \omega)$} be the component containing the $X$-tails of all necklaces in $\omega$.
The topology of the \emph{end space} of $(S, \cC)$ is generated by the basic open sets of the form $\{\omega \in \Omega(S, \cC): K(X, \omega) = K\}$ for all finite sets $X \subseteq S$ and all components $K$ in $\cK(S \setminus X)$.

We show that this notion of ends of connectoids is reasonable:
Firstly, we prove in \cref{sec:relation_ends} that the end spaces of undirected and directed graphs are homeomorphic to the end spaces of the corresponding connectoids. For undirected graphs this homeomorphism can be defined by the observation that the vertex sets of equivalent rays form equivalent necklaces.

Secondly, as for undirected and directed graphs~\cite{diestel2003graph}*{Theorem~2.2}, ends of connectoids correspond to directions: a \emph{direction} of a connectoid $(S, \cC)$ is a map $d$ that sends every finite set $A \subseteq S$ to a component in $\cK(S \setminus A)$ such that $d(B) \subseteq d(A)$ holds for every $A \subseteq B \in S^{< \infty}$. We prove
\begin{restatable}{thm}{DirectionTheorem} \label{thm:direction}
    Let $(S, \cC)$ be a connectoid. For every end $\omega$ of $(S, \cC)$ there is a unique direction $d_\omega$ such that $d_\omega(A) = K(A, \omega)$ for every finite set $A \subseteq S$. The map sending $\omega$ to $d_\omega$ is a bijection between the ends of $(S, \cC)$ and the directions of $(S, \cC)$.
\end{restatable}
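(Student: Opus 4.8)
The plan is to settle everything except surjectivity first, as those parts are formal. For existence and uniqueness of $d_\omega$ I would simply set $d_\omega(A):=K(A,\omega)$; uniqueness is then forced by the displayed identity, and the only thing to check is the monotonicity $d_\omega(B)\subseteq d_\omega(A)$ for finite $A\subseteq B$. Fixing any necklace $N\in\omega$, I would first note that the $B$-tail of $N$ is contained in its $A$-tail: both contain almost all of $N$, and $\cK(N\setminus B)$ refines $\cK(N\setminus A)$ by consistency, so the class containing almost all of $N$ on the finer side sits inside the one on the coarser side. Transporting this through the consistency of $(\cK(S\setminus\,\cdot\,))$ gives the claim, since $K(B,\omega)\in\cK(S\setminus B)$ contains the $B$-tail, which also lies in $K(A,\omega)$, so the unique component of $\cK(S\setminus A)$ containing $K(B,\omega)$ must be $K(A,\omega)$. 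Injectivity is immediate from the definition of end-equivalence: if $d_\omega=d_{\omega'}$ then $K(X,\omega)=K(X,\omega')$ for all finite $X$, whence any $N\in\omega$ and $N'\in\omega'$ have their $X$-tails in a common component for every finite $X$, so $N$ and $N'$ are equivalent and $\omega=\omega'$.

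The substance is surjectivity: given a direction $d$, I would construct a necklace $N$ whose $A$-tail lies in $d(A)$ for every finite $A$, so that $d=d_{\omega_N}$ for the end $\omega_N$ containing $N$. The enabling observation is that each $d(A)$ is infinite: if $d(A)$ were finite then $B:=A\cup d(A)$ is finite with $d(B)\subseteq d(A)$, yet $d(B)$ is a nonempty component of $S\setminus B$ and hence disjoint from $d(A)$, a contradiction. Infiniteness together with connectivity of the nested components $d(A)$ is exactly what lets a necklace thread through them.

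To build $N$ and its witness $(H_n)$ I would proceed recursively, maintaining an increasing sequence of finite sets $A_n$ that contains all previously built beads, together with beads $H_n$ that are deep, i.e.\ $H_n\subseteq d(A_n)$. Because each $d(A_n)$ is infinite and connected, I can choose the next bead $H_{n+1}$ as a finite connected subset of $d(A_n)$ that overlaps $H_n$ in a single chosen element kept temporarily out of the separator, while reaching into the strictly deeper component $d(A_{n+1})$; since the earlier beads have been absorbed into $A_{n+1}$, the component $d(A_{n+1})$ avoids them, which forces the necklace incidence pattern $H_i\cap H_j\neq\emptyset\iff|i-j|\leq 1$. For any finite $A$ that is eventually contained in some $A_m$, the $A$-tail of $N$ then lies in $d(A_m)\subseteq d(A)$, as desired.

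The main obstacle is that the construction only controls finite sets absorbed into $\bigcup_n A_n$, while $d=d_{\omega_N}$ requires the $A$-tail of $N$ to lie in $d(A)$ for \emph{every} finite $A$, including separators meeting elements the necklace never visits. The danger is genuine: a finite set $W$ disjoint from $\bigcup_n A_n$ may split $d(A)$ and strand the tail of $N$ in a component other than $d(A\cup W)$. When $S$ is countable I would remove the difficulty by enumerating $S$ and forcing every element into some $A_n$, so that $\bigcup_n A_n=S$ and no such $W$ exists. For arbitrary $S$ I would instead run the whole construction inside $B:=S\cap M$ for a countable elementary submodel $M$ of a sufficiently large $H_\theta$ with $S,\cC,d\in M$; every element of $B$ is absorbed as before, and the crux reduces to showing that finite separators outside $B$ cannot redirect the tail. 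This I expect to be the real work, and would handle it by reflection: any external $W$ that redirected $d$ at some $A\in M$ would, by elementarity, be shadowed by an internal separator $W'\in B$ with the same effect, and such internal separators have already been taken into account by the construction. Surjectivity together with injectivity then yields the claimed bijection.
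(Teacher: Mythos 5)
The formal parts of your proposal (well-definedness of $d_\omega$, monotonicity via tails, injectivity) are correct and match the paper. Your countable case is also essentially sound, modulo one repair: the invariant ``$H_n\subseteq d(A_n)$ where $A_n$ contains all earlier beads'' is self-contradictory, since then $H_n\cap H_{n-1}\subseteq d(A_n)\cap A_n=\emptyset$; and the patch of keeping the overlap point out of $A_n$ does not obviously work either, because that point need not lie in the component $d(A_n)$ after the rest of $H_{n-1}$ is removed. The working version is the one your next clause actually describes: require only $H_{n+1}\subseteq d(A_n)$, meeting $H_n$ in a point of $H_n\cap d(A_n)$ (which exists by the previous step's ``reaching in'' condition) and meeting $d(A_{n+1})$. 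With that, every finite $W$ is eventually absorbed into some $A_m$, the tail $\bigcup_{k>m}H_k$ lies in $d(A_m)\subseteq d(W)$, and surjectivity follows for countable $S$.

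The genuine gap is the uncountable case, exactly where you flagged ``the real work''. The reflection argument does not go through as described, for a structural reason: the necklace $N$ you build is not an element of $M$ --- its construction consumes an enumeration of $S\cap M$, and $S\cap M\notin M$ when $S$ is uncountable --- so elementarity cannot be applied to any statement mentioning $N$, its witness sequence, or its end; only finite initial segments of the construction live in $M$. Consequently ``shadowed by an internal separator $W'$ with the same effect'' is not something elementarity can deliver: ``the same effect'' means ``strands the tail of $N$ outside $d(\,\cdot\,)$'', which is a property of the external object $N$. Reflection only yields an internal $W'$ with the same first-order properties over parameters in $M$ (say, splitting some $d(A)$ into two infinite pieces); your construction then dutifully routes $N$ into $d(W')$, but this constrains in no way which component of $\cK(S\setminus W)$ the tail of $N$ lands in, since the components of $S\setminus W'$ and of $S\setminus W$ can interleave arbitrarily. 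What you actually need is the claim: a necklace contained in $S\cap M$ whose tails agree with $d$ on every finite subset of $S\cap M$ agrees with $d$ on every finite subset of $S$ --- equivalently, $d$ and the direction induced by $N$ cannot be distinct yet agree on $[S\cap M]^{<\omega}$. That is a substantive assertion (it is the whole difficulty), and your proposal contains no proof of it. The paper avoids this issue entirely by building robustness against \emph{all} finite separators into the construction itself: its pieces $C_n$ are chosen so that $d(X\cup\bigcup_{i<n}C_i)\cap C_n\neq\emptyset$ for \emph{every} finite $X$ from the deeper region, and a dichotomy (whether or not such pieces can be added forever) handles both outcomes; this single device makes the proof uniform in the cardinality of $S$ and is the key idea your approach is missing.
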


Thirdly, we show that ends of connectoids have a close relation to a notion of normal trees.
A \emph{weak normal tree} $T$ of a connectoid $(S, \cC)$ is a rooted, undirected tree $T$ with $V(T) \subseteq S$ such that
\begin{itemize}
    \item for every connected set $C \in \cC$ and every two $\leq_T$-incomparable elements $u, v \in C \cap V(T)$ there exists $w \in C$ such that $w \leq_T u, v$, and
    \item for every two $\leq_T$-comparable elements $u \leq_T v$ there exists a connected set $C \in \cC$ containing $u$ and $v$ that avoids every element $w <_T u$,
\end{itemize}
where $\leq_T$ refers to the tree-order of $T$.
If additionally for every rooted ray $R$ in $T$ there is a necklace that contains almost all elements of $V(R)$ we call $T$ a \emph{normal tree} of $(S, \cC)$.
Furthermore, $T$ is \emph{spanning}  if $V(T) = S$.
We will show later that every weak normal tree that is spanning is normal.

The definition of normal trees of connectoids differs from that of normal trees in undirected graphs in the fact that the edges of the tree are not a substructure of the connectoid since connectoids do not contain edges.
But for an undirected connected graph $G$ and a subset $U \subseteq V(G)$ there is a normal tree containing $U$ if and only if there is a normal tree $T$ in the corresponding connectoid that contains $U$:
For the forward direction the tree $T$ is a normal tree in the corresponding connectoid since its rooted rays form necklaces.
For the backward direction note that $T$ is a normal tree covering $U$ in $G \cup T$.
Since the existence of normal trees is closed under taking connected subgraphs~\cite{pitz2021proof}*{Theorem~1.2}, there exists a normal tree containing $U$ in $G$.

We show that the ends of a normal spanning tree reflect the ends of its connectoid:
\begin{restatable}{thm}{EndFaithful}\label{thm:endfaithfullness}
Let $(S, \cC)$ be a connectoid with a normal spanning tree $T$. Then there is a bijection between the ends of $(S, \cC)$ and the ends of $T$.
\end{restatable}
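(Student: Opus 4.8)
The plan is to route both ends spaces through the directions of \cref{thm:direction} and through the branch structure of $T$. Since $T$ is a tree, every vertex has finite height, so I would first observe that the heights tend to infinity along every ray of $T$: if a ray returned to its $\liminf$ height infinitely often, it would visit two distinct vertices of that height with the segment between them staying at least as high, contradicting the fact that the tree-path between two distinct vertices of equal height passes through their meet, which has strictly smaller height. Hence every end of $T$ contains exactly one rooted ray $R = w_0 <_T w_1 <_T \cdots$, and distinct rooted rays lie in distinct ends because after their first point of divergence they sit in disjoint subtrees separated by a finite set. Thus the ends of $T$ are in canonical bijection with the rooted rays of $T$, and it suffices to biject the rooted rays with the directions of $(S,\cC)$ and then apply \cref{thm:direction}.

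The engine of the argument is a structural description of connected sets relative to $\le_T$. For $t \in S$ write $U_t := \{v \in S : t \le_T v\}$ for its up-set and $D_t := \{v : v <_T t\}$, $\overline D_t := \{v : v \le_T t\}$ for its (finite) strict and closed down-sets. Using the first defining property of a weak normal tree together with $V(T)=S$, I would show that every $C \in \cC$ has a unique $\le_T$-minimum $r_C$ and satisfies $C \subseteq U_{r_C}$: two elements of $C$ of minimal height would be incomparable and hence, by that property, admit a common lower bound in $C$ of smaller height, a contradiction. Combining this with the second property, which joins $t$ to each of its descendants by a connected set avoiding $D_t$, I would prove the key lemma: for every $t$ the set $U_t$ is a single component of $S \setminus D_t$, since no connected set contained in $S \setminus D_t$ can leave $U_t$ once it meets it, while $U_t$ is itself connected there. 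It follows that the components of $S \setminus \overline D_{w_k}$ are exactly the up-sets $U_c$ of the off-chain children $c$ of $w_0,\dots,w_k$ together with the $U_c$ for the children $c$ of $w_k$.

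Given a rooted ray $R$, I would set $d_R(A)$, for finite $A \subseteq S$, to be the component of $\cK(S \setminus A)$ containing $U_{w_K}$ for all sufficiently large $K$; this is well defined because $U_{w_K}$ is connected, avoids $A$ once $K$ exceeds the maximal height occurring in $A$, and decreases with $K$, and the inclusion $d_R(B) \subseteq d_R(A)$ for $A \subseteq B$ is immediate, so $d_R$ is a direction. Injectivity is clear from the previous paragraph: two distinct rays diverge at a first vertex, after which they lie in distinct child-components of $S$ minus their common predecessor chain. For surjectivity, given a direction $d$ I would build a ray greedily, starting from the root $w_0$ and maintaining the invariant $d(\overline D_{w_k}) = U_{w_{k+1}}$: since $\overline D_{w_k} \subseteq \overline D_{w_{k+1}}$ forces $d(\overline D_{w_{k+1}}) \subseteq U_{w_{k+1}}$, and the components of $S \setminus \overline D_{w_{k+1}}$ inside $U_{w_{k+1}}$ are exactly the up-sets of the children of $w_{k+1}$, the value $d(\overline D_{w_{k+1}})$ selects the next vertex $w_{k+2}$. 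The process never stalls because a direction never selects a finite component (adjoining such a component to $A$ would violate $d(B)\subseteq d(A)$), so each chosen $U_{w_{k+1}}$ is infinite and $w_{k+1}$ has further descendants, yielding an infinite rooted ray $R$. A short comparison of components then shows $d = d_R$, and composing $R \mapsto d_R$ with \cref{thm:direction} produces the desired bijection.

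The main obstacle I expect is the structural lemma that $U_t$ is a full component of $S \setminus D_t$, together with the identification of the components of $S \setminus \overline D_{w_k}$ as child up-sets, since every later step rests on this clean picture. The two delicate points are proving that a connected set lies entirely in the cone of its $\le_T$-minimum, where the weak-normal-tree axioms enter, and verifying that directions avoid finite components, which is exactly what keeps the greedily constructed ray infinite. I note that the necklace condition appearing in the definition of a \emph{normal} (as opposed to weak normal) tree is not needed beyond what \cref{thm:direction} already supplies, in agreement with the remark that spanning weak normal trees are automatically normal.
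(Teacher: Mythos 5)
Your proof is correct, but it follows a genuinely different route from the paper's. The paper deduces the theorem from a stronger local statement (\cref{thm:endfaithfullness_local}): for an arbitrary, not necessarily spanning, normal tree $T$, the map sending an end $\omega$ of $T$ to the end of $(S,\cC)$ to which $V(R_\omega)$ converges is a bijection onto the ends lying in the closure of $V(T)$; there the necklace condition in the definition of normal tree is what makes the map well defined, injectivity comes from the fact that $V(R_\omega\cap R_{\omega'})$ separates the tails of the two rays, and surjectivity is obtained by growing a rooted ray $(t_n)_{n\in\NN}$ satisfying $K(\ODown{t_n}_T,\nu)=K_{t_n}^T$. You instead bypass necklaces entirely: you identify the ends of $T$ with its rooted rays, the ends of $(S,\cC)$ with its directions via \cref{thm:direction}, and then match rooted rays with directions through the component structure of a spanning weak normal tree. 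Your ``key lemma'' that $\Up{t}_T$ is a full component of $S\setminus\ODown{t}_T$ is exactly the ``furthermore'' part of \cref{prop:equivalence_weak_normal_tree}, so you could cite it rather than reprove it; your genuinely new ingredients --- that the components of $S\setminus\Down{w_k}_T$ are precisely the up-sets of the off-path children of $w_0,\dots,w_k$ together with those of the children of $w_k$, and that a direction never selects a finite component (which is what keeps your greedy ray from stalling at a leaf) --- are both sound. Your route buys economy and confirms your closing observation: in the spanning case only weak normality is needed, the necklace condition being subsumed by \cref{thm:direction}. What it gives up is generality: identifying components of $S\setminus\Down{w_k}_T$ with child up-sets uses $V(T)=S$ essentially, so your argument does not yield the paper's local version for non-spanning normal trees, which relates $\Omega(T)$ to the ends in the closure of $V(T)$. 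Incidentally, the two bijections coincide: the end whose direction is $d_R$ is exactly the end to which $V(R)$ converges.
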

\noindent
Further, we prove that the ends of every connected connectoid can be approximated by rayless normal trees:
\begin{restatable}{thm}{Approximate}\label{thm:approx} 
Let $(S, \cC)$ be a connected connectoid. For every collection $\mathcal{K} = \{K(X_\omega, \omega): \omega \in \Omega(S, \cC)\}$ there is a rayless normal tree $T$
of $(S, \cC)$ such that every component in $\cK(S \setminus V(T))$ is subset of an element of $\mathcal{K}$.
\end{restatable}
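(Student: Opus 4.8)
The plan is to build the desired tree $T$ greedily by recursion, stopping along a branch as soon as the region it explores is already small, and then to prove that this process cannot run forever. First I would fix the stopping rule. Starting from a root, I maintain a weak normal tree together with, for each leaf $t$, the component $C_t \in \cK(S \setminus \lceil t\rceil)$ that it currently explores (here $\lceil t\rceil$ is the down-closure of $t$). If $C_t \subseteq K$ for some $K \in \mathcal K$, I declare $t$ a leaf and leave $C_t$ untouched; otherwise I extend the tree into $C_t$ by adjoining new children and recursing on the components of $C_t$ minus these children. Adjoining a suitable finite connected set of children keeps the two defining conditions of a weak normal tree intact; this is the connectoid analogue of the usual normal-tree construction and uses only connectedness and finitarity of $\cC$. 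Since the down-closure of every vertex of a rooted tree is finite, each $\lceil t\rceil$ is automatically finite, so all the components $C_t$ and the sets $K(A,\omega)$ are defined.

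I would then present the covering property as a consequence of raylessness. A rayless weak normal tree has no rooted ray, so the extra condition defining a normal tree holds vacuously and $T$ is normal. For the covering property, take a component $M \in \cK(S \setminus V(T))$ and an element $s \in M$. Tracing $s$ down the tree, at each non-stopped node $s$ lies in the explored region but is not itself a tree vertex, so $s$ descends into a strictly smaller explored region; since $T$ is rayless this descent is finite and terminates at a stopped leaf $t$, whose region satisfies $C_t \subseteq K$ for some $K \in \mathcal K$. As $V(T) \supseteq \lceil t\rceil$, consistency of the component family forces $M$ to lie inside a single component of $S \setminus \lceil t\rceil$, namely $C_t$; hence $M \subseteq C_t \subseteq K$, as required.

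The heart of the argument, and the main obstacle, is to guarantee that $T$ is rayless. Suppose for contradiction that $R = v_0 v_1 \cdots$ is a rooted ray, with explored regions $C_0 \supseteq C_1 \supseteq \cdots$, where $C_i$ is the component of $S \setminus \lceil v_i\rceil$ containing the tail $v_{i+1}, v_{i+2}, \dots$; by the stopping rule no $C_i$ lies in any $K \in \mathcal K$. Using the connectivity condition of weak normal trees together with finitarity, the tail vertices along $R$ are joined by finite connected sets, and I would show that these assemble into a necklace, exactly as in the correspondence between rooted rays and ends underlying \cref{thm:endfaithfullness}; this necklace represents an end $\omega$, and via \cref{thm:direction} its direction satisfies $d_\omega(\lceil v_i\rceil) = C_i$ for every $i$. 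The key point is then that $C_i$ must eventually avoid the finite set $X_\omega$: once it does, $C_i$ is a connected subset of $S \setminus X_\omega$ meeting the $X_\omega$-tail of the necklace, so $C_i \subseteq K(X_\omega,\omega) \in \mathcal K$, contradicting that $C_i$ was not stopped.

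This last step is exactly where the construction must be done with care: $C_i$ fails to avoid $X_\omega$ precisely when some vertex lies in $\bigcap_i C_i$, that is, when $\omega$ is ``dominated'' by a vertex which stays attached to every tail. A naive choice of a single representative vertex per region does not prevent this (a dominating vertex may be attached to the whole ray yet never be selected), so I would instead choose the vertices entering $T$ adaptively, guided by the directions of \cref{thm:direction}, so that every vertex attached to $\omega$ throughout the descent, in particular every vertex of $\bigcap_i C_i$, is placed into $T$ below the ray. With $\bigcap_i C_i = \emptyset$ secured along every branch, the finite set $X_\omega$ is disjoint from $C_i$ for all large $i$, the contradiction above goes through, and $T$ is rayless. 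I expect the handling of these dominating vertices to be the only genuinely delicate part; the remaining verifications, namely weak-normality of $T$ and the bookkeeping of the recursion, are routine adaptations of the undirected normal-tree construction.
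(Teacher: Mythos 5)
Your scaffolding --- a greedy extension with a stopping rule, with raylessness yielding both normality and the covering property --- is the same as the paper's, and your covering argument essentially matches its final paragraph. The gap is the raylessness proof, and it sits exactly at the two places you yourself flag as delicate; neither is resolved by your sketch. First, rooted rays in a \emph{weak} normal tree do not in general assemble into necklaces, so you cannot argue ``exactly as in \cref{thm:endfaithfullness}'': that theorem presupposes a normal tree, i.e.\ precisely the convergence you are trying to establish. The paper's own example (the countable star, whose leaves form a ray that is a weak normal tree) shows that the finite connected sets joining consecutive ray vertices may all pass through one common element and hence never form a witness. This first gap is in fact repairable without necklaces: if you knew $\bigcap_i C_i = \emptyset$ along the ray, the decreasing connected sets $C_i$ would eventually avoid every finite set, hence define a direction, hence by \cref{thm:direction} an end $\omega$ with $C_i \subseteq K(X_\omega, \omega)$ for large $i$, contradicting the stopping rule. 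Second, and this is the real obstruction, your mechanism for securing $\bigcap_i C_i = \emptyset$ (``adaptively place every vertex attached to $\omega$ into $T$ below the ray'') is not implementable: at any finite stage the prospective end $\omega$ is not determined and need not even exist, the set of elements attached to it can be infinite (in the connectoid of $K_{\aleph_0}$ every element dominates the unique end), and each extension step must remain finite, both for \cref{prop:finite_neighbourhood} and \cref{prop:extension_normal_tree} to apply and for your own covering argument to go through. Concretely, run your scheme on the star rooted at a leaf: here $\mathcal{K} = \emptyset$, so no region is ever stopped, there are no ends to guide the choices, and nothing prevents picking the next leaf as the only new child at every step --- the output is exactly the ray that is weak normal but not normal. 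So the construction, as specified, can fail.

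What fills this hole in the paper is a dichotomy lemma, proved from \cref{thm:direction} together with \cref{prop:ends_of_subconnectoid}: for every unbounded component $K$ there is a \emph{finite} set $S_K \subseteq K$ such that $\cK(K \setminus S_K)$ has either no unbounded component at all or at least two. (If every finite subset of $K$ left exactly one unbounded component, these components would define a direction of the induced subconnectoid on $K$, hence an end $\omega$ contained in some $\omega' \in \Omega(S, \cC)$, and then $K(X_{\omega'} \cap K, \omega) \subseteq K(X_{\omega'}, \omega')$ would be a bounded value of that direction --- contradiction.) The first horn absorbs domination without ever enumerating dominating vertices: for $K_{\aleph_0}$ one takes $S_K = X_\omega \cap K$, a finite set, not the infinite set of dominators. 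The second horn is what kills rays: along any prospective ray the construction leaves behind, at each step, an unbounded component inside the shell $K(n) \setminus K(n+2)$, and finite connected sets $H_x$ deliberately inserted into the tree tie consecutive shells together into a genuine necklace; its end $\omega$ then traps one left-behind unbounded component inside $K(X_\omega, \omega)$, making it bounded --- contradiction. This dichotomy, the core of Kurkofka--Melcher--Pitz's argument which the paper adapts, is the missing idea; without it, or a substitute of comparable strength, the part you deferred as ``genuinely delicate'' is the entire proof.
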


\noindent
\cref{thm:approx} generalises Kurkofka, Melcher and Pitz' result \cite{kurkofka2021approximating}*{Theorem~1} for undirected graphs.
It enables us to show that the end spaces of connectoids are \emph{ultra-paracompact}, i.e.\ every open cover of an end space can be refined to an open partition. 
Furthermore, using \cref{thm:approx} we will show in the second paper of this series that a normal spanning tree exists if normal spanning trees exist locally at each end.

In the second paper of this series~\cite{connectoids2} we will study normal trees of connectoids by characterising their existence.
This includes a Jung-type \cite{jung1969wurzelbaume}*{Theorem~6} characterisation via dispersed sets.
Furthermore, we show that the existence of a tree satisfying the first condition of weak normal trees suffices for the existence of normal trees.
In this way we show that this notion of normal tree is sensible and establish normal trees for (bi)directed graphs, hypergraphs and finitary matroids.
The second author will investigate this noval concept of normal trees in the setting of directed graphs in~\cite{normaltreesofdigraphs}.

\medskip

This paper is organised as follows. We discuss basic properties of necklaces and weak normal trees in \cref{sec:prelims}.
In \cref{sec:direction_thm} we prove the direction theorem, \cref{thm:direction}, and in \cref{sec:relation_ends} we show that ends of connectoids generalise ends of undirected and directed graphs.
In \cref{sec:normal_tree} we show \cref{thm:endfaithfullness}.
We prove \cref{thm:approx} in \cref{sec:approx} and characterise the class of connectoids that can be compactified in \cref{sec:compactification}.
Finally, we state open problems in \cref{sec:open_problems}.

\section{Preliminaries} \label{sec:prelims}
For standard graph-theoretic notations we refer to Diestel's book \cite{DiestelBook2016}.
Let $\NN:= \{1, 2, 3, \dots \}$ and $\NN_0:= \{0\} \cup \NN$.
Given a partial order $\leq$ of a set $S$ and some element $s \in S$ we define $\Up{s}_\leq:= \{ x \in S: x \geq s \}$ and $\OUp{s}_\leq:= \Up{s}_\leq \backslash \{s\}$.
The sets $\Down{s}_\leq$ and $\ODown{s}_\leq$ are defined analogously.
Given the tree order $\leq_T$ of some rooted tree $T$ we simply write $\Up{s}_T, \OUp{s}_T, \Down{s}_T$ and $\ODown{s}_T$ for the respective sets.
Given a tree $T$, we call every $\leq_T$-down-closed set of pairwise $\leq_T$-comparable elements in $V(T)$ a \emph{branch} of $T$. 
For simplicity, we write $t \in T$ instead of $t \in V(T)$ for a tree $T$.
Given a connectoid $(S, \cC)$, some tree $T$ with $V(T) \subseteq S$ and some $t \in T$, we define $K_t^T$ to be the unique component in $\cK(S \setminus\ODown{t}_T)$ containing $t$.

We call a connectoid $(S, \cC)$ \emph{connected} if $S$ is a connected set.
A connectoid $(S', \cC')$ is a \emph{subconnectoid} of $(S, \cC)$ if $S' \subseteq S$ and $\cC' \subseteq \cC$.
Note that $(S', \cC \cap \cP(S'))$ is a subconnectoid of $(S, \cC)$ for every $S' \subseteq S$, and we refer to it as the \emph{induced subconnectoid} of $(S, \cC)$ on $S'$.

\subsection{Necklaces}
For undirected graphs it is a basic fact that two rays $R, R'$ are equivalent if and only if there are infinitely many disjoint $R$-$R'$ paths~\cite{DiestelBook2016}.
We show:
\begin{prop}\label{prop:necklace_property}
    Let $(S, \cC)$ be a connectoid and $N, N'$ necklaces of $(S, \cC)$.
    Then $N$ and $N'$ are equivalent if and only if there is an infinite family $(C_n)_{n \in \NN}$ of disjoint, finite connected sets such that every $C_n$ contains elements of $N$ and $N'$.
\end{prop}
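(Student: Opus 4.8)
The plan is to prove the two implications separately, following the template of the classical ray result but with the $X$-tails playing the role of ray tails. For the forward implication I would suppose $N$ and $N'$ are equivalent and build the family $(C_n)_{n \in \NN}$ recursively. Having chosen pairwise disjoint finite connected sets $C_1, \dots, C_{n-1}$, set $X := C_1 \cup \dots \cup C_{n-1}$, which is finite. By equivalence the $X$-tails $T$ of $N$ and $T'$ of $N'$ lie in a common component $K \in \cK(S \setminus X)$. Since a necklace is infinite, both tails are nonempty, so I may pick $x \in T \subseteq N$ and $x' \in T' \subseteq N'$. As $K$ is connected there is a finite connected set $C_n \subseteq K$ with $x, x' \in C_n$, and $C_n$ avoids $X$ because $K \subseteq S \setminus X$. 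Thus $C_n$ is finite and connected, meets both $N$ and $N'$, and is disjoint from $C_1, \dots, C_{n-1}$; iterating produces the desired infinite family.

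For the backward implication I would assume such a family $(C_n)_{n \in \NN}$ exists, fix a finite $X \subseteq S$, and show that the $X$-tails $T$ of $N$ and $T'$ of $N'$ lie in a common component of $\cK(S \setminus X)$. The key observation is that, by definition, $T$ contains all but finitely many elements of $N$, so $N \setminus T$ is finite, and likewise $N' \setminus T'$. Hence $F := X \cup (N \setminus T) \cup (N' \setminus T')$ is finite, and since the $C_n$ are pairwise disjoint only finitely many of them meet $F$; choose any $C_m$ with $C_m \cap F = \emptyset$. By hypothesis $C_m$ contains some $y \in N$ and some $y' \in N'$; as $y \notin N \setminus T$ we get $y \in T$, and similarly $y' \in T'$, while $C_m \cap X = \emptyset$. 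Then $C_m \cup T \cup T'$ is a connected subset of $S \setminus X$ meeting both tails, so it is contained in a single component $K$ of $\cK(S \setminus X)$; in particular $T \subseteq K$ and $T' \subseteq K$, which is exactly what equivalence requires.

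The only genuinely delicate point is the claim that $C_m \cup T \cup T'$ is connected, because the tails $T, T'$ may be infinite and so one cannot simply invoke the closure property \labelcref{itm:bonding_closed} for this union. Instead I would argue from the finitary definition of connectedness: given two elements of $C_m \cup T \cup T'$, one joins them by concatenating a finite connected subset of one piece with a finite connected subset of another along the shared elements $y$ or $y'$, each such gluing being legitimate by \labelcref{itm:bonding_closed} since the pieces being glued are finite and meet. The remaining ingredients, namely the recursion in the forward direction and the counting of the finitely many $C_n$ that meet a fixed finite set, are routine.
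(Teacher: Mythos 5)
Your proof is correct and follows essentially the same route as the paper's: the forward direction is the same recursion placing each $C_n$ inside the common component of the previous sets' tails, and the backward direction is the same counting argument choosing a $C_m$ disjoint from $X$ and from the finitely many necklace elements outside the $X$-tails. The only difference is cosmetic: where you explicitly verify that $C_m \cup T \cup T'$ is connected by gluing finite connected sets via property \labelcref{itm:bonding_closed}, the paper leaves this implicit by noting that the tails and $C_m$ are intersecting connected sets avoiding $X$ and hence lie in one component of $\cK(S \setminus X)$.
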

\begin{proof}
    If $N$ and $N'$ are equivalent, we construct the desired infinite family $(C_n)_{n \in \NN}$ recursively: 
    Assume that $(C_n)_{n \leq m}$ has been defined for some $m \in \NN$.
    Then let $C_{n + 1}$ be some finite connected set in the unique component in $\cK(S \setminus (\bigcup_{n \leq m} C_n))$ that contains the $(\bigcup_{n \leq m} C_n)$-tails of $N$ and $N'$ such that $C_{n + 1}$ contains an element of $N$ and $N'$.
    Then $C_{n + 1}$ is as desired.

    If there is an infinite family $(C_n)_{n \in \NN}$ of finite connected sets such that $C_n$ contains a vertex of $N$ and $N'$ for every $n \in \NN$, then for every finite set $X \subseteq S$ there is $m \in \NN$ such that $C_m$ avoids $X$ and such that it has a vertex in the $X$-tails of $N$ and $N'$.
    Thus the $X$-tails of $N$ and $N'$ are contained in the same component in $\cK(S \setminus X)$.
    This implies that $N$ and $N'$ are equivalent.
\end{proof}

We say an infinite set $X \subseteq S$ \emph{converges} to an end $\omega$ if $K(Y, \omega)$ contains almost all elements of $X$ for every finite set $Y \subseteq S$. Note that an infinite set converges to at most one end.

\begin{prop}\label{prop:converging}
	Let $(S, \cC)$ be a connectoid and $X \subseteq S$ a countably infinite set. Then the following properties are equivalent:
	\begin{enumerate}[label=(\roman*)]
		\item\label{itm:converging_1} there exists an end $\omega \in \Omega(S, \cC)$ such that $X$ converges to $\omega$,
		\item\label{itm:converging_2} for every $Y \in S^{< \infty}$ there is a component in $\cK(S \setminus Y)$ that contains almost all elements of $X$, and
		\item\label{itm:converging_3} there exists a necklace that contains almost all elements of $X$.
	\end{enumerate}
	Furthermore, if $(S, \cC)$ is connected, then \labelcref{itm:converging_1,itm:converging_2,itm:converging_3} hold true if and only if there is a necklace that contains all elements of $X$.
\end{prop}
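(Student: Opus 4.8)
The plan is to prove the cycle \labelcref{itm:converging_1}$\Rightarrow$\labelcref{itm:converging_2}$\Rightarrow$\labelcref{itm:converging_3}$\Rightarrow$\labelcref{itm:converging_1}, of which only the middle implication requires real work. The implication \labelcref{itm:converging_1}$\Rightarrow$\labelcref{itm:converging_2} is immediate: if $X$ converges to $\omega$, then for every $Y\in S^{<\infty}$ the set $K(Y,\omega)$ is by definition a component in $\cK(S\setminus Y)$ containing almost all of $X$. For \labelcref{itm:converging_3}$\Rightarrow$\labelcref{itm:converging_1} I would use that every necklace lies in a unique end $\omega$; if a necklace $N$ contains almost all of $X$, then for each $Y\in S^{<\infty}$ the component $K(Y,\omega)$ contains the $Y$-tail of $N$, which contains almost all of $N$ and hence almost all of $X$, so $X$ converges to $\omega$.

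The heart of the matter is the construction of a necklace, and I would set it up so that it simultaneously yields the ``furthermore''. I would first treat the connected case and prove the stronger conclusion that there is a necklace containing all of $X$; the general implication \labelcref{itm:converging_2}$\Rightarrow$\labelcref{itm:converging_3} then follows by restriction. So assume $(S,\cC)$ is connected and that \labelcref{itm:converging_2} holds, and enumerate $X=\{x_1,x_2,\dots\}$. For $Y\in S^{<\infty}$ write $K_Y$ for the unique component of $\cK(S\setminus Y)$ containing almost all of $X$; by consistency $K_Y\supseteq K_{Y'}$ whenever $Y\subseteq Y'$. I would build a witness $(H_n)_{n\in\NN}$ recursively, arranging that each $H_n$ is disjoint from $H_1\cup\dots\cup H_{n-2}$ while meeting $H_{n-1}$, so that the witness condition $H_i\cap H_j\neq\emptyset\iff|i-j|\le1$ holds automatically and $N:=\bigcup_n H_n$ is a necklace. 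Writing $U_n:=H_1\cup\dots\cup H_n$, I would carry a tip $p_n\in H_n$ lying in the tail $K_{U_{n-1}}$ (the component of $\cK(S\setminus U_{n-1})$ carrying almost all of $X$) and maintain the invariant that every not-yet-captured element of $X$ lies in the current tail. Connectedness gives $K_\emptyset=S$, so the invariant holds at the start. To build the next bead I would take the least-indexed uncaptured element $x$, note that $x$ and $p_n$ lie in the same connected tail, and use finitarity of that tail to pick a finite connected set through $p_n$ and $x$ inside it; this set avoids the earlier beads (as it lives in the tail) and meets its predecessor in $p_n$, as required.

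The delicate point, which I expect to be the main obstacle, is re-establishing the invariant after a bead is absorbed into the growing stem: a finite bead routed toward the end can separate previously uncaptured elements of $X$ from the next tail. Here hypothesis \labelcref{itm:converging_2} is exactly what limits the damage, since each tail contains almost all of $X$, so only finitely many uncaptured elements are expelled at any stage. The plan is to capture these finitely many threatened elements immediately, by enlarging the current bead to a larger finite connected subset of the tail that contains them; the subtle point to check is that this absorption can be arranged so that no new elements are lost (equivalently, that the enlargement can be chosen so that all remaining uncaptured elements still lie in a single tail), which needs a short argument that the process of absorbing expelled elements terminates. Granting this, the invariant is preserved, every $x_n$ is captured at the latest when it becomes the least-indexed uncaptured element, and therefore $X\subseteq\bigcup_n U_n=N$, proving the connected case.

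Finally, for the general implication \labelcref{itm:converging_2}$\Rightarrow$\labelcref{itm:converging_3} I would reduce to the connected case. Let $K:=K_\emptyset$ be the component of $\cK(S)$ containing almost all of $X$ and pass to the induced subconnectoid on $K$, which is connected. One checks that $X\cap K$ is countably infinite and still satisfies \labelcref{itm:converging_2} inside this subconnectoid: for finite $Y\subseteq K$ the tail $K_Y$ is a connected subset of $S$ meeting $K$ and hence contained in $K$, so it is also the relevant component of the subconnectoid. The connected case then yields a necklace of the subconnectoid, and therefore of $(S,\cC)$, containing all of $X\cap K$, i.e.\ almost all of $X$. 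Together with the two easy implications this closes the cycle; moreover the connected-case construction, together with the trivial observation that a necklace containing all of $X$ witnesses \labelcref{itm:converging_3}, establishes the final equivalence.
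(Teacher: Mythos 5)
Your overall route is the paper's route: the same cycle of implications with the same two easy steps, the same recursive bead construction with tips, one-behind tails and absorption of expelled elements, and the same way of getting the ``furthermore'' from the connected case. (Your reduction of \labelcref{itm:converging_2}$\Rightarrow$\labelcref{itm:converging_3} by passing to the induced subconnectoid on $K_\emptyset$ is a harmless repackaging of the paper's device of working with $X' = X \cap S'$ inside $(S,\cC)$; both are fine.)

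The one place where your proof is incomplete is exactly the step you flag and then defer: you ``grant'' that the process of absorbing expelled elements terminates. In fact no termination argument is needed, and the reason is already visible in your own setup. State the invariant one step behind, as your choice of tip $p_n \in K_{U_{n-1}}$ suggests: after the beads $H_1, \dots, H_{n+1}$ are built, every uncaptured element of $X$ lies in $K_{U_n}$, the component of $\cK(S \setminus U_n)$ containing almost all of $X$, where $U_n = H_1 \cup \dots \cup H_n$. This reference tail depends only on the beads strictly before $H_{n+1}$, not on $H_{n+1}$ itself. So when you enlarge $H_{n+1}$ inside the connected set $K_{U_{n-1}}$ to swallow the finitely many uncaptured elements of $K_{U_{n-1}} \setminus K_{U_n}$ (and also one element of $K_{U_n}$, to serve as the next tip $p_{n+1}$), the enlargement cannot expel anything further \emph{relative to $K_{U_n}$}: the set you are measuring against does not move. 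Absorption is a single operation per stage, not an iterative process; the cascade you worry about appears only if you measure expulsion against the tail of the already-enlarged bead, i.e.\ against a moving target. This is precisely how the paper organises its recursion: its invariant places $X' \setminus \bigcup_{i \leq n} H_i$ in a component $K_n \in \cK\bigl(S \setminus \bigcup_{i < n} H_i\bigr)$, one index behind the beads, and then $H_{n+1}$ is chosen in one step inside $K_n$ to contain an element of $H_n$, the least-indexed uncaptured element, an element of $K_{n+1}$, and all uncaptured elements outside $K_{n+1}$. With the invariant pinned down this way, your argument closes and coincides with the paper's proof.
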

\begin{proof}
	\begin{description}
		\item[\labelcref{itm:converging_1} implies \labelcref{itm:converging_2}] 
		If $X$ converges to an end $\omega$, then the component $K(Y, \omega)$ is as desired for every finite set $Y \subseteq S$.
		\item[\labelcref{itm:converging_2} implies \labelcref{itm:converging_3}]
		Let $S'$ be the component in $\cK(S)$ that contains almost all elements of $X$.
		We set $X' := S' \cap X$ and let $\{x_n: n \in \NN\}$ be an enumeration of $X'$.
		We build recursively a witness $(H_n)_{n \in \NN}$ of a necklace such that
		\begin{enumerate}[label=(\alph*)]
			\item \label{itm:converging_2_a} the set $X' \setminus \bigcup_{i \leq n} H_i$ is contained in a common component $K_n \in \cK(S \setminus \bigcup_{i < n} H_i)$,
			\item \label{itm:converging_2_b} $H_n$ contains an element of $K_n$, and
			\item \label{itm:converging_2_c} $H_n$ contains the element of $X' \setminus \bigcup_{i < n} H_i$ with least index.
		\end{enumerate}
		Then $\bigcup_{n \in \NN} H_n$ is a necklace that contains $X'$ by \labelcref{itm:converging_2_c}, and thus is as desired.
		
		Let $H_1$ be some finite connected set containing $x_1$.
		We assume that $H_n$ has been constructed for some $n \in \NN$.
		By assumption, there is a unique component $K_{n + 1} \in \cK(S \setminus \bigcup_{i \leq n} H_i)$ that contains almost all elements of $X'$. By \labelcref{itm:converging_2_a}, $K_n$ contains $K_{n + 1}$ and avoids $\bigcup_{i < n} H_i$.
		The connected set $K_n$ contains an element of $H_n$ by \labelcref{itm:converging_2_b} and all elements of $X' \setminus \bigcup_{i \leq n} H_i$ by \labelcref{itm:converging_2_a}.
		Let $H_{n +1}$ be a finite connected set in $K_n$ containing an element of $H_n$, the element of $X' \setminus \bigcup_{i \leq n} H_i$ with least index, an element of $K_{n + 1}$ and all elements of $X' \setminus (\bigcup_{i \leq n} H_i \cup K_{n +1})$.
		Then $(H_i)_{i \leq n+1}$ can indeed be extended to a witness. Furthermore, \labelcref{itm:converging_2_a}, \labelcref{itm:converging_2_b} and \labelcref{itm:converging_2_c} hold by construction of $H_{n + 1}$. This finishes the construction of the witness $(H_n)_{n \in \NN}$.
		\item[\labelcref{itm:converging_3} implies \labelcref{itm:converging_1}]
		If there exists a necklace $N$ that contains almost all elements of $X$, then the $Y$-tail of $N$ contains almost all elements of $X$ for every finite set $Y \subseteq S$ since the $Y$-tail of $N$ contains almost all elements of $N$. Let $\omega \in \Omega(S, \cC)$ be the end that contains $N$. As the $Y$-tail of $N$ is contained in $K(Y, \omega)$, almost all elements of $X$ are contained in $K(Y, \omega)$ for every finite set $Y \subseteq S$.
	\end{description} 
	If $(S, \cC)$ is connected, then $X' =X$ in the proof of `\labelcref{itm:converging_2} implies \labelcref{itm:converging_3}', which provides a necklace containing $X$.
\end{proof}

As tails of rays are rays, tails of necklaces are also necklaces:
\begin{cor}
	Let $N \subseteq S$ be a countably infinite connected set in some connectoid $(S, \cC)$.
	Then $N$ is a necklace if and only if there is a component in $\cK(N \setminus Y)$ that contains almost all elements of $N$ for every finite set $Y \subseteq N$.\\
	Furthermore, the $X$-tail of a necklace $N$ is a necklace for every finite set $X \subseteq S$.
\end{cor}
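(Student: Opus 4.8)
The plan is to reduce the first equivalence to \cref{prop:converging} by passing to the induced subconnectoid on $N$, and then to deduce the ``Furthermore'' statement from the first equivalence.

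First I would consider the induced subconnectoid $(N, \cC_N)$ with $\cC_N := \cC \cap \cP(N)$. Since $N$ is connected in $(S, \cC)$, every two elements of $N$ are joined by a finite connected set contained in $N$, so $N$ is connected in $(N, \cC_N)$; that is, $(N, \cC_N)$ is a \emph{connected} connectoid. The key observation is that the connectivity structure of $(N, \cC_N)$ agrees with that of $(S, \cC)$ on subsets of $N$: for $K \subseteq N$ we have $K \in \cC_N$ if and only if $K \in \cC$. Hence for every $Y \subseteq N$ the components of $N \setminus Y$ taken in $(N, \cC_N)$ are exactly the members of $\cK(N \setminus Y)$, and a subset of $N$ is a necklace of $(N, \cC_N)$ if and only if it is a necklace of $(S, \cC)$, since the defining witness uses only finite connected subsets of $N$.

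Now I would apply \cref{prop:converging} to the connected connectoid $(N, \cC_N)$ with the countably infinite set $X := N$. By the previous paragraph, condition \labelcref{itm:converging_2} for this instance is precisely the right-hand side of the claimed equivalence: for every finite $Y \subseteq N$ some member of $\cK(N \setminus Y)$ contains almost all of $N$. Since $(N, \cC_N)$ is connected, the ``Furthermore'' clause of \cref{prop:converging} turns this into the existence of a necklace of $(N, \cC_N)$ containing \emph{all} of $N$; such a necklace is contained in and contains $N$, hence equals $N$, and is therefore a necklace of $(S, \cC)$. As the equivalence in \cref{prop:converging} runs in both directions, this settles the first claim.

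For the ``Furthermore'' statement, let $T_X$ be the $X$-tail of a necklace $N$; it is a countably infinite connected set, so by the first claim it suffices to produce, for each finite $Y \subseteq T_X$, a member of $\cK(T_X \setminus Y)$ containing almost all of $T_X$. Here I would use that $N$ is a necklace to invoke its $(X \cup Y)$-tail $T'$, a component in $\cK(N \setminus (X \cup Y))$ containing almost all of $N$. Since $N \setminus (X \cup Y) \subseteq N \setminus X$, the consistency of the component partitions forces $T'$ to lie inside a single component of $N \setminus X$; as $T'$ contains almost all of $N$, that component is $T_X$, so $T' \subseteq T_X$. Because $T' \subseteq T_X$ and $T_X \setminus Y = T_X \cap (N \setminus (X \cup Y))$, the component $T'$ of $N \setminus (X \cup Y)$ is also a component of $T_X \setminus Y$, and as it contains almost all of $N$ it contains almost all of $T_X$. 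This completes the reduction.

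I expect the main obstacle to be the bookkeeping that aligns the ``internal'' components --- those of $N \setminus Y$ inside $N$, and those of $T_X \setminus Y$ inside $T_X$ --- with the ambient data, since \cref{prop:converging} speaks about whole-space components whereas the corollary's hypothesis concerns components relative to $N$. The subconnectoid reduction resolves this for the first equivalence, and the consistency of $(\cK(S'))_{S' \subseteq S}$ resolves it for the tail; neither step is deep, but both must be stated carefully to make the application of \cref{prop:converging} legitimate.
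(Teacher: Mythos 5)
Your proposal is correct and follows essentially the same route as the paper's proof: both parts reduce to \cref{prop:converging} via the induced subconnectoid $(N, \cC \cap \cP(N))$ with its connectedness clause, and both establish the tail statement by showing the $(X \cup Y)$-tail of $N$ is a component of $N' \setminus Y$ containing almost all of $N'$. Your write-up merely spells out the bookkeeping (agreement of components and necklaces between $(N,\cC_N)$ and $(S,\cC)$, and the consistency argument for $T' \subseteq T_X$) that the paper leaves implicit.
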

\begin{proof}
	Note that $N$ is a necklace if and only if there is a necklace in the subconnectoid $(N, \cC \cap \mathcal{P}(N))$ that contains $N$.
	By \cref{prop:converging} and since $(N, \cC \cap \mathcal{P}(N))$ is connected, there is a necklace in $(N, \cC \cap \mathcal{P}(N))$ that contains $N$ if and only if there is a component in $\cK(N \setminus Y)$ that contains almost all elements of $N$ for every finite $Y \subseteq N$. 
	
	For the `furthermore'-part, let $X \subseteq N$ be an arbitrary finite set and let $N'$ be the $X$-tail of $N$.
	For every $Y \subseteq N'$ there is a component $K \in \cK(N \setminus (X \cup Y))$ that contains almost all elements of $N$ since $N$ is a necklace.
	By the definition of $N'$, $K$ is contained in $N'$ and thus a component in $\cK(N' \setminus Y)$. This shows that $N'$ is a necklace.
\end{proof}

\subsection{Weak normal trees}
\begin{prop}\label{prop:equivalence_weak_normal_tree}
    Let $(S, \cC)$ be a connectoid and $T$ a rooted, undirected tree with $V(T) \subseteq S$. Then the following properties are equivalent:
    \begin{enumerate}[label=(\arabic*)]
        \item\label{itm:equivalence_weak_normal_tree_1} $T$ is a weak normal tree, and
        \item\label{itm:equivalence_weak_normal_tree_2} $K_t^T \cap V(T) = \Up{t}_T$ holds for every $t \in T$.
    \end{enumerate}
    Furthermore, $K_t^T = \Up{t}_T$ holds for every vertex $t$ of a weak normal spanning tree $T$.
\end{prop}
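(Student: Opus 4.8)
The plan is to match each of the two conditions in the definition of a weak normal tree to one of the two inclusions making up the equality $K_t^T \cap V(T) = \Up{t}_T$, and then to read off the \emph{furthermore} statement from the spanning hypothesis. Throughout I use that every component is a connected set, so that $K_t^T \in \cC$ and the first condition of weak normality may be applied to it, and that in the rooted tree $T$ any two vertices $u,v$ possess a greatest common lower bound $m := u \wedge v$ in the order $\leq_T$, with the descendant sets $\Up{c}_T, \Up{c'}_T$ of two incomparable vertices $c, c'$ being disjoint.

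First I would prove \labelcref{itm:equivalence_weak_normal_tree_1} $\Rightarrow$ \labelcref{itm:equivalence_weak_normal_tree_2}. For the inclusion $\Up{t}_T \subseteq K_t^T$, fix $v \geq_T t$ and apply the second condition to the comparable pair $t \leq_T v$: it yields a connected $C \in \cC$ containing $t$ and $v$ and avoiding every $w <_T t$, so $C \subseteq S \setminus \ODown{t}_T$; as $C$ is connected and contains $t$, it lies in the component $K_t^T$, whence $v \in K_t^T$. For the reverse inclusion $K_t^T \cap V(T) \subseteq \Up{t}_T$, take $x \in K_t^T \cap V(T)$. If $x$ and $t$ are $\leq_T$-comparable then $x <_T t$ is impossible since $K_t^T$ avoids $\ODown{t}_T$, so $x \geq_T t$. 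If they are incomparable, apply the first condition to the connected set $K_t^T$ and the incomparable pair $x, t \in K_t^T \cap V(T)$ to obtain $w \in K_t^T$ with $w \leq_T x, t$; as $w \leq_T t$ and $w \in K_t^T$ avoids $\ODown{t}_T$, necessarily $w = t$, so $t \leq_T x$, contradicting incomparability.

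For \labelcref{itm:equivalence_weak_normal_tree_2} $\Rightarrow$ \labelcref{itm:equivalence_weak_normal_tree_1} the second condition is immediate: for $u \leq_T v$ the set $C := K_u^T$ is connected, avoids $\ODown{u}_T$ by definition, contains $u$, and contains $v$ because $v \in \Up{u}_T = K_u^T \cap V(T)$. The first condition is the crux. Given connected $C \in \cC$ and incomparable $u, v \in C \cap V(T)$, let $m := u \wedge v$ and let $c$ be the child of $m$ on the path to $u$, so $u \in \Up{c}_T$ and $\ODown{c}_T = \Down{m}_T$. By \labelcref{itm:equivalence_weak_normal_tree_2} we have $u \in \Up{c}_T = K_c^T \cap V(T) \subseteq K_c^T$, whereas $v \notin K_c^T$: indeed $v \in \Up{c'}_T$ for the child $c' \neq c$ of $m$ towards $v$, and $\Up{c'}_T$ is disjoint from $\Up{c}_T = K_c^T \cap V(T)$, so the vertex $v$ is not in $K_c^T$. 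Thus $C$ contains a point of $K_c^T$ and a point outside it; were $C$ disjoint from $\ODown{c}_T$, it would be a connected subset of $S \setminus \ODown{c}_T$ meeting the component $K_c^T$ and hence contained in $K_c^T$, contradicting $v \in C \setminus K_c^T$. So $C$ meets $\ODown{c}_T = \Down{m}_T$, giving $w \in C$ with $w \leq_T m \leq_T u, v$, as required.

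Finally, if $T$ is a weak normal spanning tree then $V(T) = S \supseteq K_t^T$, so $K_t^T \cap V(T) = K_t^T$, and the equality just established yields $K_t^T = \Up{t}_T$. I expect the main obstacle to be the first condition in the direction \labelcref{itm:equivalence_weak_normal_tree_2} $\Rightarrow$ \labelcref{itm:equivalence_weak_normal_tree_1}: one must pinpoint the correct separator $\ODown{c}_T = \Down{m}_T$ just below the meet and verify, via disjointness of descendant sets, that it separates $v$ from $u$, so that connectedness of $C$ forces $C$ to reach down to or below $m$.
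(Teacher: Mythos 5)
Your proposal is correct, and two of its three pieces coincide with the paper's proof: the direction \labelcref{itm:equivalence_weak_normal_tree_1}~$\Rightarrow$~\labelcref{itm:equivalence_weak_normal_tree_2} (second condition gives $\Up{t}_T \subseteq K_t^T$; first condition rules out incomparable vertices in $K_t^T$) and the comparable-pair condition in the converse (take $C := K_u^T$) are exactly the paper's arguments. Where you diverge is the crux you yourself identified, the incomparable-pair condition in \labelcref{itm:equivalence_weak_normal_tree_2}~$\Rightarrow$~\labelcref{itm:equivalence_weak_normal_tree_1}. The paper argues by contradiction: given a violating pair $u,v$ in a connected set $C$, it assumes without loss of generality that $u$ is $\leq_T$-minimal in $C$, concludes $C \subseteq K_u^T$, and then $v \in K_u^T \cap V(T) = \Up{u}_T$ contradicts incomparability. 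You instead argue directly: you form the meet $m = u \wedge v$, take the child $c$ of $m$ towards $u$, observe $u \in K_c^T$ but $v \notin K_c^T$ (via disjointness of $\Up{c}_T$ and $\Up{c'}_T$), and conclude that $C$ must meet the separator $\ODown{c}_T = \Down{m}_T$, which produces the witness $w \leq_T u,v$. Both are sound. Your version buys explicitness --- it exhibits where the witness lives (at or below the meet) --- and it sidesteps the paper's WLOG step, which as stated needs the small verification that replacing $u$ by a $\leq_T$-minimal element of $C \cap V(T)$ below it preserves incomparability with $v$. The price is that you invoke the fact that a connected set avoiding $\ODown{c}_T$ and meeting the component $K_c^T$ lies inside it; this is indeed valid in connectoids (a connected set meeting an equivalence class of $\sim_{S \setminus \ODown{c}_T}$ is contained in that class, by the bonding axiom for finite connected sets), but it is worth stating, since maximality of components is the only structural input available here. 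The paper's argument is marginally shorter; yours is the more robust template if one later needs to locate separators rather than merely derive contradictions.
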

\begin{proof}
\begin{description}
    \item[\labelcref{itm:equivalence_weak_normal_tree_1} implies \labelcref{itm:equivalence_weak_normal_tree_2}]
        Let $t \in T$ be an arbitrary vertex. Then $K_t^T \cap V(T)$ is contained in $\Up{t}_T$ since every connected set containing $t$ and some element $v \in T$ that is $\leq_T$-incomparable to $t$ has to contain an element of $\ODown{t}_T$. Furthermore, every element $v \in \Up{t}_T$ is contained in $K_t^T$ as there is a connected set containing $t$ and $v$ that avoids $\ODown{t}_T$. Thus $K_t^T \cap V(T) = \Up{t}_T$ holds for every vertex $t \in T$.
    \item[\labelcref{itm:equivalence_weak_normal_tree_2} implies \labelcref{itm:equivalence_weak_normal_tree_1}]
        Firstly, suppose for a contradiction that there is a connected set $C \in \cC$ containing two $\leq_T$-incomparable elements $u, v$ with $C \cap \ODown{u}_T \cap \ODown{v}_T = \emptyset$. We assume without loss of generality that $u$ is $\leq_T$-minimal with the property $u \in C$.
        Then $C \subseteq K_u^T$ since $C \cap \ODown{u}_T = \emptyset$. But $v \notin K_u^T$ since $K_u^T \cap V(T) = \Up{u}_T$, a contradiction.

        Secondly, for every two $\leq_T$-comparable elements $u \leq_T v$ the connected set $K_u^T$ contains $u$ and $v$ and avoids $\ODown{u}_T$. 
\end{description}
    For the \qq{furthermore}-part we use \labelcref{itm:equivalence_weak_normal_tree_2} together with $V(T) = S$.
\end{proof}

\section{Direction theorem}\label{sec:direction_thm}

\DirectionTheorem*

\begin{proof}
    Let $\omega \in \Omega(S, \cC)$ be arbitrary. The function $d_\omega$ is indeed a direction since $ K(B, \omega) \subseteq K(A, \omega)$ for every $A \subseteq B \in S^{< \infty}$. For $\omega \neq \omega' \in \Omega(S, \cC)$ there exists a finite set $A \subseteq S$ such that $K(A, \omega) \neq K(A, \omega')$ by the definition of end. Thus the map sending $\omega$ to $d_\omega$ is injective.
    It remains to prove that this map is surjective.
    Let $d$ be an arbitrary direction of $(S, \cC)$. We show the existence of an end $\omega \in \Omega(S, \cC)$ such that $d(A) = K(A, \omega)$ for every finite set $A \subseteq S$. 

    We construct a sequence $C_1, C_2, \dots$ of disjoint finite connected sets of $(S, \cC)$ with the properties
    \begin{enumerate}[label=(\roman*)]
        \item\label{itm:direction1} $C_n \subseteq d(\bigcup_{i = 1}^{n - 1} C_i)$,
        \item\label{itm:direction2} $d(X \cup \bigcup_{i = 1}^{n-1} C_i) \cap C_{n} \neq \emptyset$ holds for every finite $X \subseteq d(\bigcup_{i = 1}^n C_i)$
    \end{enumerate}
    for every $n \in \NN$ by recursively adding a suitable finite connected set as long as possible.
	We remark that $\bigcup_{i= 1}^{0} C_i := \emptyset$.
    \begin{description}
        \item[Case 1: the sequence is infinite]
            Set $K_n:= d(\bigcup_{i= 1}^{n - 1} C_i)$ for every $n \in \NN$.
            Then $K_1 \supset K_2 \supset K_3 \supset \dots$ is a strictly decreasing sequence of connected sets with limit $K:= \bigcap_{n \in \NN} K_n$ by property \labelcref{itm:direction1}. We recursively construct a witness $(H_n)_{n \in \NN}$ of a necklace $N$ and a strictly increasing sequence $(m(n))_{n \in \NN_0} \in \NN^{\NN_0}$ such that for every $n \in \NN$
            \begin{enumerate}[label=(\alph*)]
                \item\label{itm:direction_c1_3} $K_{m(n)} \cap \bigcup_{i < n} H_i \subseteq K$,
                \item\label{itm:direction_c1_1} $H_n \subseteq K_{m(n - 1)}$, and
                \item\label{itm:direction_c1_2} $C_{m(n)} \subseteq H_n$.
            \end{enumerate}
            
            Let $\ell \in \NN_0$ and assume that $(H_n)_{n \leq \ell}$ and $(m(n))_{n \leq \ell}$ have been defined. As $\bigcup_{i < \ell + 1} H_i$ is finite, there is $m( \ell + 1) \in \NN$ such that $K_{m(\ell + 1)} \cap \bigcup_{i < \ell + 1} H_i \subseteq K$, which ensures property \labelcref{itm:direction_c1_3}.
            
            Now we consider the set $X:= K_{m(\ell)} \cap \bigcup_{i < \ell} H_i$, which is subset of 
            $K$ by property \labelcref{itm:direction_c1_3}. In particular, $X$ is subset of $K_{m(\ell) + 1}$ and $K_{m(\ell + 1) + 1}$.
            The connected set $d(X \cup \bigcup_{i = 1}^{m(\ell) - 1} C_i)$ intersects $C_{m(\ell)}$ by property \labelcref{itm:direction2}. As $C_{m(\ell)}$ is disjoint to $X \cup \bigcup_{i = 1}^{m(\ell) - 1} C_i$, the set $C_{m(\ell)}$ is even contained in $d(X \cup \bigcup_{i = 1}^{m(\ell) - 1} C_i)$. Similarly, the connected set $d(X \cup \bigcup_{i = 1}^{m(\ell + 1) - 1} C_i) \subseteq d(X \cup \bigcup_{i = 1}^{m(\ell) - 1} C_i)$ contains $C_{m(\ell + 1)}$. Let $H_{\ell + 1}$ be a finite connected set in $d(X \cup \bigcup_{i = 1}^{m(\ell) - 1} C_i)$ containing $C_{m(\ell)}$ and $C_{m(\ell + 1)}$.

            As $H_{\ell + 1}$ contains $C_{m(\ell)}$, it intersects $H_\ell$ by property \labelcref{itm:direction_c1_2}. As $H_{\ell + 1} \subseteq d(X \cup \bigcup_{i = 1}^{m(\ell) - 1} C_i)$, it is contained in $K_{m(\ell)}$ and avoids $X$. This implies that $H_{\ell + 1}$ avoids $H_i$ for every $i < \ell$ by choice of $X$. Therefore, $(H_i)_{i \leq \ell + 1}$ can indeed be an initial segment of a witness of a necklace.
            Furthermore, property \labelcref{itm:direction_c1_1} is ensured and property \labelcref{itm:direction_c1_2} holds by construction. This finishes the construction of the witness $(H_n)_{n \in \NN}$.
            
            It remains to prove that $d(Y)$ contains a tail of $N$ for every finite set $Y \subseteq S$.
            Let $Y \subseteq S$ be an arbitrary finite set and pick $j \in \NN$ such that $Y \cap K_j \subseteq K$ holds. Then $Y$ splits into $Y \setminus K_j$ and $Y \cap K$. Then $d(Y \cup \bigcup_{i = 1}^{j - 1} C_i) = d((Y \cap K) \cup \bigcup_{i = 1}^{j - 1} C_i)$ meets $C_k$ for every $k \geq j$ since $ d((Y \cap K) \cup \bigcup_{i = 1}^{k - 1} C_i) \subseteq d((Y \cap K) \cup \bigcup_{i = 1}^{j - 1} C_i)$ does so by property~\labelcref{itm:direction2}.
            Take $p \in \NN$ such that the tail $\bigcup_{n \geq p} H_n$ of $N$ is disjoint to $Y$. Then the tail $\bigcup_{n \geq p} H_n$ contains $C_{m(n)}$ for every $n \geq p$ and therefore meets $d(Y \cup \bigcup_{i = 1}^{j - 1} C_i)$.
            Since $d(Y \cup \bigcup_{i = 1}^{j - 1} C_i) \subseteq d(Y)$, the tail $\bigcup_{n \geq p} H_n$ is contained in $d(Y)$, which completes the first case.

        \item[Case 2: the sequence is finite]
            Let $k \in \NN$ be the length of the sequence and set $Z:= \bigcup_{i= 1}^{k} C_i$.
            We construct a sequence of disjoint finite connected sets $(B_n)_{n \in \NN}$ in $d(Z)$ such that $d(B_1 \cup Z) \supset d(B_2 \cup Z) \supset d(B_3 \cup Z) \supset \dots$ is a strictly decreasing sequence of connected sets with $\bigcap_{n \in \NN} d(B_n \cup Z) = \emptyset$.

            Set $B_1 := \emptyset$. Let $\ell \in \NN_0$ and assume that $(B_n)_{n \leq \ell}$ has been defined. As $B_\ell \subseteq d(Z)$ has property \labelcref{itm:direction1} but does not extend the sequence $(C_n)_{n \leq k}$, property \labelcref{itm:direction2} does not hold. Thus there exists a finite set $X \subseteq d(B_\ell \cup Z)$ with $d(X \cup Z) \cap B_\ell = \emptyset$. Let $B_{\ell +1}$ be a finite nonempty connected set in $d(B_\ell \cup Z)$ that contains $X$.
            Then $d(B_{\ell + 1} \cup Z) \subseteq d(X \cup Z)$ avoids $B_\ell$ and is therefore contained in $d(B_\ell \cup Z)$. By construction, $B_{\ell + 1} \subseteq d(B_\ell \cup Z)$ holds and therefore $B_{\ell + 1}$ is disjoint to $B_n$ for $n \leq \ell$. This finishes the construction of the sequence $(B_n)_{n \in \NN}$. Note that the sequence $(B_n)_{n \in \NN}$ has the additional property that $B_{n + 1} \subseteq d(Z \cup B_n)$ for every $n \in \NN$.
            
            Suppose for a contradiction that there is $b \in \bigcap_{n \in \NN} d(B_n \cup Z)$ and pick a finite connected set $A \subseteq d(Z)$ containing $b$ and $B_2$. Note that $A$ avoids $Z$. The connected set $A$ contains an element of $B_j$ for all $j \geq 2$, as $b \in A \cap d(B_j \cup Z)$ and $B_2 \subseteq A \setminus d(B_j \cup Z)$, for every $j \in \NN$. This contradicts that $A$ is finite but the elements of $(B_j)_{j \in \NN}$ are disjoint.

            Set $K_1 := d(Z)$ and $K_n := d(B_{n - 1} \cup Z)$ for every $n \geq 2$. We construct a witness $(H_k)_{k \in \NN}$ of a necklace $N$ and a strictly increasing sequence $(m(n))_{n \in \NN_0} \in \NN^{\NN_0}$ such that
            \begin{enumerate}[label=(\alph*)]
                \item\label{itm:direction_c2_1} $K_{m(n)} \cap \bigcup_{i < n} H_i = \emptyset$,
                \item\label{itm:direction_c2_2} $H_n \subseteq K_{m(n - 1)}$, and
                \item\label{itm:direction_c2_3} $B_{m(n)} \subseteq H_n$.
            \end{enumerate}
            Let $\ell \in \NN_0$ and assume that $(H_n)_{n \leq \ell}$ and $(m(n))_{n \leq \ell}$ have been defined. Let $m( \ell + 1) > m(\ell)$ be such that $K_{m( \ell + 1)} \cap \bigcup_{n < \ell + 1} H_n = \emptyset$, which is possible since $\bigcap_{n \in \NN} K_n = \emptyset$. Then property \labelcref{itm:direction_c2_1} holds.
            Let $H_{\ell + 1}$ be some finite connected set in $K_{m(\ell)}$ containing $B_{m(\ell)}$ and $B_{m(\ell + 1)}$. Then properties \labelcref{itm:direction_c2_2} and \labelcref{itm:direction_c2_3} hold. Furthermore, $(H_k)_{k \leq \ell + 1}$ can be an initial segment of a witness since $H_{\ell + 1}$ intersects $H_k$ for $k \leq \ell$ if and only if $k= \ell$. This completes the construction of the witness $(H_k)_{k \in \NN}$.

            It remains to prove that $d(Y)$ contains a tail of $N$ for every finite set $Y \subseteq S$. Let $Y \subseteq S$ be some finite set. Let $n \in \NN$ such that $Y \cap \bigcup_{k \geq n} H_k = \emptyset$ and $Y \cap K_n = \emptyset$. Since $m$ is strictly increasing, $m(n) \geq n$ holds. Thus $B_{m(n)} \subseteq K_{m(n)} \subseteq K_n$ holds. Therefore $\emptyset \neq B_{m(n)} \subseteq K_n \cap \bigcup_{k \geq n} H_k$ holds. Note that $K_n \subseteq d(Y)$ holds, since $Y \cap K_n = \emptyset$. This implies that the tail $\bigcup_{k \geq n} H_k$ of $N$ is contained in $d(Y)$, which finishes the proof. \qedhere
    \end{description}
\end{proof}

\section{Relation to ends of undirected and directed graphs}\label{sec:relation_ends}

In this section we show that the notion of ends in connectoids is indeed a generalisation of ends in undirected and directed graphs.
More precisely, we prove that the end spaces of undirected and directed graphs are homeomorphic to the end spaces of the corresponding connectoids under omitting limit edges for directed graphs.
Furthermore, we explain how edge-ends can be displayed by ends of connectoids.

\subsection{Ends of undirected graphs}
An \emph{end} of an undirected graph $G$ is an equivalence class of rays under the relation of having tails in a common component of $G - A$ for every finite vertex set $A \subseteq V(G)$~\cite{DiestelBook2016}.
For an end $\omega$ and a finite set $A \subseteq V(G)$ we refer to this component as $K(A, \omega)$. The \emph{end space} of $G$ is generated by the basic open sets of the form
$\{ \omega \in \Omega(G): K(A, \omega) = K\}$ for finite sets $A$ and components $K$ of $G - A$~\cite{DiestelBook2016}.
A \emph{direction} $d$ of an undirected graph $G$ is a map $d$ sending every finite set $A \subseteq V(G)$ to a component of $G - A$ such that $d(B) \subseteq d(A)$ holds for every $A \subseteq B \in V(G)^{< \infty}$~\cite{diestel2003graph}.

We use the direction theorem for undirected graphs:
\begin{thm}[\cite{diestel2003graph}*{Theorem~2.2}] \label{thm:direction_undirected}
    Let $G$ be an undirected graph. For every end $\omega$ of $G$ there is a unique direction $d_\omega$ such that $d_\omega(A) = K(A, \omega)$ for every finite set $A \subseteq V(G)$. The map sending $\omega$ to $d_\omega$ is a bijection between the ends of $G$ and the directions of $G$.
\end{thm}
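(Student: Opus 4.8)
The plan is to reduce to the connectoid direction theorem, \cref{thm:direction}, rather than reprove the classical ray construction from scratch. First I would dispose of the easy parts exactly as in the proof of \cref{thm:direction}. For an end $\omega$ the map $d_\omega$ given by $d_\omega(A) = K(A,\omega)$ is a direction: if $A \subseteq B$, then any ray in $\omega$ has a tail in $K(B,\omega)$, and this tail is connected and avoids $A$, so it lies in a single component of $G-A$, namely $K(A,\omega)$; hence $K(B,\omega) \subseteq K(A,\omega)$. The assignment $\omega \mapsto d_\omega$ is injective because distinct ends have, by definition, tails in distinct components of $G-A$ for some finite $A$. Thus the whole content is surjectivity: given a direction $d$ of $G$, I must produce a ray realizing it.

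For this I would pass to the connectoid $(V(G), \cC)$ associated with $G$, whose connected sets are the vertex sets of connected subgraphs of $G$. For every finite $A \subseteq V(G)$ the components in $\cK(V(G) \setminus A)$ are precisely the vertex sets of the components of $G - A$, and this identification respects inclusion. Hence, after identifying components with their vertex sets, $d$ is a direction of the connectoid $(V(G),\cC)$. Applying \cref{thm:direction} yields a connectoid end $\omega'$, that is, an equivalence class of necklaces, with $K(A,\omega') = V(d(A))$ for every finite $A$. Note that this uses only \cref{thm:direction} and the elementary correspondence of components, so it introduces no circularity with the end-space homeomorphism of \cref{sec:relation_ends}.

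It remains to realize $\omega'$ by a genuine ray, which is the main remaining point: a necklace is only a connected vertex set, so I must extract from it an honest ray without repeated vertices while preserving convergence to $\omega'$. Fix a necklace $N \in \omega'$ with witness $(H_n)_{n \in \NN}$ and choose $t_n \in H_n \cap H_{n+1}$ for each $n$. Since $H_{n+1}$ is a connected vertex set, the induced subgraph $G[H_{n+1}]$ is connected and contains $t_n$ and $t_{n+1}$, so there is a $t_n$--$t_{n+1}$ path $Q_n$ inside $G[H_{n+1}]$; set $U = \bigcup_n Q_n$. The witness condition $H_i \cap H_j \neq \emptyset \iff |i-j| \leq 1$ forces each vertex to lie in at most two consecutive $H_n$, hence in only finitely many $Q_n$, and as each $Q_n$ is a finite path, $U$ is locally finite; since $H_n \cap H_{n+2} = \emptyset$ the vertices $t_n$ are pairwise distinct, so $U$ is also connected and infinite, and therefore contains a ray $R$ by K\"onig's lemma. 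Finally $R \subseteq N$ is infinite, so for every finite $A \subseteq V(G)$ all but finitely many vertices of $R$ lie in the $A$-tail of $N$, which avoids $A$ and is contained in $K(A,\omega') = V(d(A))$; thus a tail of $R$ lies in the component $d(A)$ of $G-A$. Letting $\omega$ be the end of $G$ containing $R$, this says $K(A,\omega) = d(A)$ for every finite $A$, i.e.\ $d_\omega = d$, which establishes surjectivity. The only delicate points are the local finiteness of $U$ (so that K\"onig's lemma applies) and the verification that the $t_n$ are distinct and escape every finite set; both follow directly from the defining intersection pattern of a witness.
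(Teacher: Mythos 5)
Your proof is correct, but it does not reproduce the proof the paper relies on: the paper treats \cref{thm:direction_undirected} as a known external result of Diestel and K\"uhn (it is cited, not reproved; the original argument is a direct ray construction inside the graph), and it only remarks at the end of \cref{sec:relation_ends} that \cref{thm:direction} implies \cref{thm:direction_undirected}, without giving any details. What you have written is precisely a correct implementation of that remark. The reduction of a graph direction to a connectoid direction via the component correspondence is sound, and you rightly observe that there is no circularity, since \cref{thm:direction} is proved in \cref{sec:direction_thm} independently of anything in \cref{sec:relation_ends}. The genuinely nontrivial content you supply --- and the step the paper leaves entirely implicit --- is the extraction of an honest ray from a necklace: your verification that $U=\bigcup_n Q_n$ is infinite (the $t_n$ are pairwise distinct because $H_i\cap H_j=\emptyset$ for $|i-j|\geq 2$), connected, and locally finite (each vertex lies in at most two consecutive $H_n$, hence in at most two of the finite paths $Q_n$), so that K\"onig's lemma yields a ray $R$ with $V(R)\subseteq N$, is correct, and the final transfer (almost all of $V(R)$ lies in the $A$-tail of $N$, which lies in $K(A,\omega')=V(d(A))$, hence a tail of $R$ lies in the component $d(A)$ of $G-A$) checks out. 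Compared with the cited Diestel--K\"uhn argument, your route buys a uniform derivation from the connectoid theorem at the cost of this necklace-to-ray step; compared with the paper, you turn an unproved remark into an actual proof. One cosmetic point: to conclude $K(B,\omega)\subseteq K(A,\omega)$ you should say that $K(B,\omega)$ itself is connected and avoids $A$, hence lies in a single component of $G-A$, which must be $K(A,\omega)$ because it contains the common tail; the tail alone lying in $K(A,\omega)$ is not literally that inclusion.
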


For a ray $R$ in $G$ its vertex set $V(R)$ is a necklace in $(V(G), \cC)$ and we refer to it as the \emph{necklace corresponding to $R$}. Given two equivalent rays $R_1$ and $R_2$ and an arbitrary finite set $A \subseteq V(G)$, $R_1$ and $R_2$ have tails in the same component of $G - A$. Thus the $A$-tails of the corresponding necklaces $V(R_1)$ and $V(R_2)$ are contained in the same component in $\cK(V(G) \setminus A)$.
Therefore the necklaces $V(R_1)$ and $V(R_2)$ are equivalent.
This induces a well-defined, canonical map $\alpha$ sending ends of $G$ to ends of the corresponding connectoids.

\begin{lemma}\label{lem:end_spaces_homeo_undirected}
    Let $G$ be an undirected graph and let $(V(G), \cC)$ be the corresponding connectoid. Then the map $\alpha$ sending every end $\omega \in \Omega(G)$ to the end in $\Omega(V(G), \cC)$ that contains the necklaces corresponding to rays in $\omega$ is a homeomorphism between the end space of $G$ and the end space of $(V(G), \cC)$.
\end{lemma}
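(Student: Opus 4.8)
The plan is to reduce the statement to the two direction theorems, \cref{thm:direction_undirected} and \cref{thm:direction}, by identifying the underlying combinatorial data on both sides. First I would observe that in the connectoid $(V(G), \cC)$ a set $C \subseteq V(G)$ is connected precisely when the induced subgraph $G[C]$ is connected: since $\cC$ is generated by the vertex sets of finite connected subgraphs, two vertices $x, y \in C$ lie in a common member of $\mathcal{F}$ contained in $C$ exactly when they are joined by a finite path inside $G[C]$. Consequently, for every finite $A \subseteq V(G)$ the assignment $K \mapsto V(K)$ is a bijection between the components of $G - A$ and the elements of $\cK(V(G) \setminus A)$, and this bijection is compatible with inclusion. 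Hence a direction $d$ of $G$ and the map $A \mapsto V(d(A))$ determine one another, yielding a canonical bijection between the directions of $G$ and the directions of $(V(G), \cC)$.

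Next I would establish that $\alpha$ is a bijection. By \cref{thm:direction_undirected} the ends of $G$ correspond to the directions of $G$, and by \cref{thm:direction} the ends of $(V(G), \cC)$ correspond to the directions of $(V(G), \cC)$; composing these with the identification of directions above produces a bijection $\Omega(G) \to \Omega(V(G), \cC)$. It then remains to check that this composite is exactly $\alpha$. For a ray $R$ of an end $\omega$ of $G$ and a finite set $A$, a tail of $R$ lies in the graph component $K(A, \omega)$, so almost all vertices of the corresponding necklace $V(R)$ lie in the connectoid component $V(K(A, \omega))$; thus $K(A, \alpha(\omega)) = V(K(A, \omega))$ for every finite $A$. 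As this is precisely the image of the direction $d_\omega$ under the identification, and an end of $(V(G), \cC)$ is determined by its direction by \cref{thm:direction}, the end $\alpha(\omega)$ coincides with the end produced by the composite.

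Finally, for the topological statement I would transport the basic open sets along $\alpha$. The equality $K(A, \alpha(\omega)) = V(K(A, \omega))$ shows that $\alpha$ carries the basic open set $\{\omega \in \Omega(G): K(A, \omega) = K\}$ bijectively onto $\{\eta \in \Omega(V(G), \cC): K(A, \eta) = V(K)\}$, and as $K$ ranges over the components of $G - A$ the sets $V(K)$ range over all of $\cK(V(G) \setminus A)$, so these images exhaust the basis of the connectoid end space. Since $\alpha$ is a bijection carrying a basis onto a basis, both $\alpha$ and $\alpha^{-1}$ are continuous, whence $\alpha$ is a homeomorphism. I expect essentially all the content to lie in the first paragraph, namely the identification of connectoid components with graph components and the resulting correspondence of directions; once that is in place, surjectivity of $\alpha$ rests on \cref{thm:direction_undirected} (to extract a genuine end of $G$ from a direction) and the homeomorphism claim is pure bookkeeping about basic open sets.
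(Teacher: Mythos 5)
Your proposal is correct and follows essentially the same route as the paper: the paper also constructs the canonical bijection between directions of $G$ and directions of $(V(G),\cC)$ (your component identification $K \mapsto V(K)$), composes it with the two direction theorems, verifies that the composite coincides with $\alpha$ via the equality $K(A,\alpha(\omega)) = V(K(A,\omega))$, and finishes by matching basic open sets on both sides. The only difference is cosmetic: you spell out why connectoid components are exactly vertex sets of graph components, which the paper takes as given from its construction of $\cC$.
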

\begin{proof}
    There is a canonical bijection $\beta_1$ between the directions of $G$ and the directions of $(V(G), \cC)$:
    Given a direction $d$ of $G$ we obtain a direction of $(V(G), \cC)$ by mapping onto the vertex sets of the graph-theoretic components instead of the graph-theoretic components themself. Conversely, mapping to the induced subgraphs instead of the components transfers directions of $(V(G), \cC)$ to directions of $G$.

    Now concatenating $\beta_1$ with the bijection $\beta_0$ between ends and directions in undirected graphs from \cref{thm:direction_undirected} and the bijection $\beta_2$ between directions and ends in connectoids from \cref{thm:direction} gives a bijection between the ends of $G$ and the ends of $(V(G), \cC)$.
    
   We prove that the basic open sets of $\Omega(G)$ are mapped to basic open sets of $\Omega(V(G), \cC)$ and vice versa.
    Then $\beta_2 \circ \beta_1 \circ \beta_0$ is a homeomorphism between the end space of $G$ and the end space of $(V(G), \cC)$.
    Let $A \subseteq V(G)$ be an arbitrary finite set and let $K$ be some component of $G - A$.
    The function $\beta_1 \circ \beta_0$ maps the elements of the basic open set $O:= \{\omega \in \Omega(G): K(A, \omega) = K \}$ exactly to the directions of $(V(G), \cC)$ that send $A$ to $V(K)$. Thus $\beta_2 \circ \beta_1 \circ \beta_0$ maps the elements of $O$ exactly to the set $\{ \psi \in \Omega(V(G), \cC): K(\psi, A) = V(K) \}$.
    As every basic open set of $\Omega(G)$ is of the form $\{\omega \in \Omega(G): K(A, \omega) = K \}$ and every basic open set of $\Omega(V(G), \cC)$ is of the form $\{\omega \in \Omega(V(G), \cC): K(A, \omega) = V(K) \}$ for some finite set $A \subseteq V(G)$ and some component $K$ of $G - A$, we are done.
    
    Finally, we show that the bijection $\beta_2 \circ \beta_1 \circ \beta_0$ coincides with the map $\alpha$: Let $\omega \in \Omega(G)$ be arbitrary and $R$ be some ray in $\omega$. Then $\beta_0$ maps $\omega$ to the direction $d_\omega$ with $d_\omega(A) = K(A, \omega)$ for every finite set $A \subseteq V(G)$, i.e.\ $d_\omega(A)$ is the component of $G - A$ that contains a tail of $R$.
	Thus $\beta_1 \circ \beta_0$ maps $\omega$ to the direction $d_\omega'$ of $(V(G), \cC)$ such that $d_\omega'(A)$ contains a tail of the necklace $V(R)$ for every finite $A \subseteq V(G)$. By the definition of $\beta_2$, $K(A, \beta_2 \circ \beta_1 \circ \beta_0(\omega))$ contains a tail of $V(R)$ for every finite $A \subseteq V(R)$. Thus $V(R)$ is contained in $\beta_2 \circ \beta_1 \circ \beta_0(\omega)$, which proves $\alpha(\omega) = \beta_2 \circ \beta_1 \circ \beta_0 (\omega)$. Since $\omega$ was chosen arbitrarily, $\alpha = \beta_2 \circ \beta_1 \circ \beta_0$ holds.
\end{proof}

\subsection{Edge-ends of undirected graphs}
An \emph{edge-end} of an undirected graph $G$ is an equivalence class of rays under the relation of having tails in a common component of $G - F$ for every finite edge set $F \subseteq E(G)$~\cite{hahn1997edge}.

Given some ray $R$ in an undirected graph $G$, the edge-set $E(R)$ is a necklace in $(E(G), \cC_E)$, where $\cC_E:= \{C \subseteq E(G): G[C] \text{ is connected} \}$:
The family of connected sets $H_n$ consisting of the two edges incident with the $n+1$-st vertex of $R$ for every $n \in \NN$ is a witness of $E(R)$.

We show that the edge-ends of $G$ form ends of the connectoid $(E(G), \cC_E)$:

\begin{prop}\label{prop:relation_edge_ends}
	Let $G$ be an undirected graph and $\cC_E:= \{C \subseteq E(G): G[C] \text{ is connected} \}$.
	Then two rays $R_1$ and $R_2$ in $G$ are equivalent if and only if $E(R_1)$ and $E(R_2)$ are equivalent in $(E(G), \cC_E)$.
\end{prop}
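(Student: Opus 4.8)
The plan is to reduce both notions of equivalence to a single condition phrased in the graphs $G-F$, and then to observe that, read through the right dictionary, the two conditions are literally the same. The central bookkeeping observation is a correspondence between the connectoid $(E(G),\cC_E)$ and the graphs $G-F$: for a finite edge set $F\subseteq E(G)$, a set $C\subseteq E(G)\setminus F$ is connected in $\cC_E$ precisely when $G[C]$ is connected, so the components in $\cK(E(G)\setminus F)$ are exactly the edge sets of those connected components of $G-F$ that contain at least one edge. I would first record this correspondence together with its effect on tails: writing $R=v_0e_1v_1e_2\dots$, the finite set $F$ meets only finitely many $e_i$, so $E(R)\setminus F$ has a unique infinite component, namely the edge set of a tail of $R$, and this is the $F$-tail of the necklace $E(R)$.

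With this dictionary in hand, I would translate the two sides of the claimed equivalence. The necklaces $E(R_1)$ and $E(R_2)$ are equivalent exactly when, for every finite $F$, their $F$-tails lie in a common component of $\cK(E(G)\setminus F)$; by the dictionary this says that the edge sets of tails of $R_1$ and $R_2$ lie in a common component of $G-F$ containing an edge. On the other hand, $R_1$ and $R_2$ are edge-equivalent exactly when, for every finite $F$, tails of $R_1$ and $R_2$ lie in a common component of $G-F$. Since two tails of rays (which carry edges) lie in the same component of $G-F$ if and only if their edge sets lie in the same component of $\cK(E(G)\setminus F)$ --- a connected edge set avoiding $F$ that joins them is the same datum as a path in $G-F$ joining them --- the two statements coincide, and the proposition follows for both directions simultaneously.

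The work is therefore almost entirely in the dictionary, so the main thing to get right is the correspondence between edge-connectivity in $\cC_E$ and ordinary connectivity in $G-F$, and between the necklace $F$-tail and a graph tail; in particular one must note that tails of rays always lie in components of $G-F$ containing an edge, so the edgeless components (isolated vertices) cause no trouble. If instead I wanted to mirror the undirected ``infinitely many disjoint paths'' argument, I would invoke \cref{prop:necklace_property}: for the forward direction I would recursively build pairwise edge-disjoint finite connected edge sets, each coming from a path in $G-\bigcup_{i\le m}C_i$ that joins late edges of the two rays (available by edge-equivalence); for the backward direction I would use that the $F$-tail is cofinite in $E(R_i)$ to select, among the disjoint connecting sets, one that avoids $F$ and meets both $F$-tails. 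The only genuinely delicate point in that route is precisely this last step --- ensuring the connecting set meets the tails rather than arbitrary edges --- which is exactly what cofiniteness of the $F$-tail delivers.
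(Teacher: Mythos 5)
Your proposal is correct and takes essentially the same route as the paper's proof: the paper likewise establishes the dictionary $\cK(E(G)\setminus F)=\{E(K): K \text{ component of } G-F\}$ and translates both equivalences through it via tails, so your ``dictionary'' argument is the intended one (and you are in fact slightly more careful than the paper in excluding the edgeless, isolated-vertex components of $G-F$). The alternative sketch via \cref{prop:necklace_property} is not needed, but it would also work.
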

\begin{proof}
	Note that for every $F \subseteq E(G)$ we have $\cK(E(G) \setminus F) = \{E(K): K \text{ component in } G - F \}$ by definition of $\cC_E$.
	
	Two rays $R_1$ and $R_2$ are equivalent if and only if for every finite set $F \subseteq E(G)$ there exist tails $R_1^F$ and $R_2^F$ of $R_1$ and $R_2$ respectively that are contained in some component $K$ of $G - F$.
	Given some finite set $F \subseteq E(G)$, there exist tails $R_1^F$ and $R_2^F$ of $R_1$ and $R_2$ respectively that are contained in some component $K$ of $G - F$ if and only if the tails $E(R_1^F)$ and $E(R_2^F)$ of $E(R_1)$ and $E(R_2)$ are contained in the component $E(K) \in \cK(E(G) \setminus F)$.
	We can deduce that $R_1$ and $R_2$ are equivalent if and only if $E(R_1)$ and $E(R_2)$ are equivalent in $(E(G), \cC_E)$.
\end{proof}
\noindent
Further, \cref{prop:relation_edge_ends} shows that the canonical map $\phi$ from the edge-ends of an undirected graph $G$ to the ends of $(E(G), \cC_E)$ is injective.
In general, $\phi$ is not surjective:
Let $G$ be an infinite star with $E(G):=\{e_n: n \in \NN\}$.
Then $G$ does not contain a ray and therefore no edge-end.
But the family $(\{e_n, e_{n + 1}\})_{n \in \NN}$ witnesses that $\{e_n: n \in \NN\}$ is a necklace in $(E(G), \cC_E)$ and thus $(E(G), \cC_E)$ contains some end.

\subsection{Ends of directed graphs}
We turn our attention to ends in directed graphs.
A \emph{directed ray} is a directed graph whose underlying undirected graph is a ray and whose edges are oriented away from the unique vertex of undirected degree $1$~\cite{burger2020ends}.
A \emph{tail} of a directed ray $R$ is a subgraph of $R$ such that its underlying undirected graph is a tail of the underlying undirected graph of $R$~\cite{burger2020ends}.
A directed ray in a directed graph $D$ is \emph{solid} if $R$ has a tail in a strong component of $D - A$ for every finite set $A \subseteq V(D)$~\cite{burger2020ends}. Two solid directed rays are equivalent if they have a tail in the same strong component of $D - A$ for every finite set $A \subseteq V(D)$~\cite{burger2020ends}. The equivalence classes of solid rays are the \emph{ends} of $D$~\cite{burger2020ends}.
For an end $\omega$ and a finite set $A \subseteq V(D)$ we let $K(A, \omega)$ be the strong component containing tails of the directed rays in $\omega$.
The topology of the \emph{end space} of $D$ is generated by the basic open sets of the form
$\{ \omega \in \Omega(D): K(A, \omega) = K\}$ for finite sets $A \subseteq V(D)$ and strong components $K$ of $D - A$~\cite{burger2020ends}.

A \emph{direction $d$} of a directed graph $D$ is a map $d$ sending every finite set $A \subseteq V(D)$ to a strong component of $D - A$ such that $d(B) \subseteq d(A)$ holds for every $A \subseteq B \in V(G)^{< \infty}$~\cite{burger2020ends}.
B\"urger and Melcher showed:
\begin{thm}[\cite{burger2020ends}*{Theorem~2}] \label{thm:direction_directed}
    Let $D$ be a directed graph. For every end $\omega$ of $D$ there is a unique direction $d_\omega$ such that $d_\omega(A) = K(A, \omega)$ for every finite set $A \subseteq V(D)$. The map sending $\omega$ to $d_\omega$ is a bijection between the ends of $D$ and the directions of $D$.   
\end{thm}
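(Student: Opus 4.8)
The plan is to reproduce the two-step pattern of the connectoid direction theorem. First I would verify that $\omega \mapsto d_\omega$ is a well-defined injection. For an end $\omega$ the assignment $d_\omega(A) := K(A,\omega)$ is a direction: if $A \subseteq B$ are finite then a tail of a solid ray in $\omega$ lying in the strong component $K(B,\omega)$ of $D-B$ also lies in a strong component of $D-A$, forcing $K(B,\omega) \subseteq K(A,\omega)$. Distinct ends are by definition separated by some finite $A$ with $K(A,\omega) \neq K(A,\omega')$, so the map is injective. This part is routine and matches the opening of the proof of \cref{thm:direction}.

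The real content is surjectivity: given a direction $d$, I must produce a solid directed ray $R$ with $K(A, \omega_R) = d(A)$ for every finite $A$. Fixing (in the countable case) an exhaustion $A_1 \subseteq A_2 \subseteq \cdots$ of $V(D)$ by finite sets, the strong components $d(A_1) \supseteq d(A_2) \supseteq \cdots$ descend, and I would thread a directed ray through them: maintaining a finite directed path ending at a vertex $v_n \in d(A_n)$, I extend it by a directed path from $v_n$ to a chosen $w \in d(A_{n+1})$ that lies inside $d(A_n)$, which exists because $d(A_n)$ is a strong component and hence strongly connected. The tail of the resulting ray from $v_n$ onwards stays in $d(A_n)$, so for an arbitrary finite $A$, choosing $n$ with $A \subseteq A_n$ gives $d(A_n) \subseteq d(A)$ and shows that $R$ is solid with $K(A,\omega_R) = d(A)$.

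The main obstacle is turning this descent into a genuine, vertex-simple directed ray. Unlike the undirected case, strong connectivity is fragile under vertex deletion: the extending path from $v_n$ to $w$ must avoid the finitely many vertices already used, yet $d(A_n)$ minus a finite set need not remain strongly connected, so the naive concatenation may only yield a directed walk. The clean way around this --- and the route most in keeping with the present framework --- is to pass through necklaces: transport $d$ to a direction of the connectoid $(V(D),\cC)$ whose finite connected sets are the vertex sets of finite strongly connected subgraphs, invoke \cref{thm:direction} to obtain a necklace whose overlapping strongly connected pieces realise $d$, and only then extract a solid directed ray from that necklace. This isolates the delicate directed-path bookkeeping into the final extraction step and, together with a transfinite version of the exhaustion, also covers the uncountable case.
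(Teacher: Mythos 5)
The paper never proves this statement: it is quoted verbatim as B\"urger and Melcher's result (\cite{burger2020ends}*{Theorem~2}), and the only in-paper content bearing on it is the closing remark of \cref{sec:relation_ends} that \cref{thm:direction} implies \cref{thm:direction_directed}. Your proposal is exactly that remark made explicit, so you are on the intended route rather than a divergent one. Your first two steps are fine: the transport of directions works because the strong components of $D-A$ have precisely the components of $\cK(V(D)\setminus A)$ as their vertex sets, injectivity is the routine separation argument, and applying \cref{thm:direction} to the transported direction yields an end of the connectoid, i.e.\ a necklace $N$ whose $A$-tail lies in $V(d(A))$ for every finite $A\subseteq V(D)$.

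What you do not carry out is the one step that contains genuine directed-graph content: extracting a solid directed ray from $N$. Calling it ``bookkeeping'' and stopping there is a gap, because this is precisely where ends-as-necklaces get reconnected to ends-as-solid-rays; fortunately the argument is short and you should include it. Take a witness $(H_n)_{n\in\NN}$ of $N$, pick $v_n \in H_n \cap H_{n+1}$, and join $v_n$ to $v_{n+1}$ by a directed path inside the finite strongly connected subgraph spanned by $H_{n+1}$. The concatenation is a priori only a directed walk, but since $H_i \cap H_j = \emptyset$ whenever $|i-j|\geq 2$, every vertex occurs in this walk finitely often, so shortcutting (always jumping to the last occurrence of the current vertex) produces a genuine directed ray $R$. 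Given any finite $A$, choose $m$ with $\bigcup_{n\geq m} H_n$ disjoint from $A$: this union spans a strongly connected subgraph of $D-A$ containing a tail of $R$, so $R$ is solid, and since the union lies in the $A$-tail of $N$ and hence in $V(d(A))$, we get $K(A,\omega_R)=d(A)$, completing surjectivity. Two smaller corrections: the same shortcutting argument rescues the ``naive'' exhaustion approach that you dismiss --- every vertex lies in some $A_m$ and therefore occurs only finitely often in the concatenated walk --- so that approach does not in fact fail for countable $D$; and no ``transfinite version of the exhaustion'' is needed in the necklace route, because \cref{thm:direction} carries no countability hypothesis and necklaces are countable objects, so the uncountable case comes for free.
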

\noindent

Unlike in undirected graphs, two distinct ends $\omega$ and $\eta$ of a directed graph can contain solid rays $R_1 \in \omega$ and $R_2 \in \eta$ with infinitely many disjoint directed $R_1$--$R_2$~paths.
To capture this, B\"urger and Melcher introduced \emph{limit edges}:
there is a \emph{limit edge} starting in $\omega \in \Omega(D)$ and ending in $\eta \in \Omega(D)$ if there are $R_1 \in \omega$ and $R_2 \in \eta$ with infinitely many disjoint directed $R_1$--$R_2$~paths \cite{burger2020ends}.
In a similar way, they defined \emph{limit edges} between vertices and ends and proved that limit edges correspond one-to-one to edge-directions \cite{burger2020ends}*{Theorem~3}.
The connectoid corresponding to a directed graph $D$ does not capture the edge orientations of $D$ and thus cannot represent the limit edges of $D$.

\begin{lemma}\label{lem:end_spaces_homeo_directed}
    Let $D$ be a directed graph and let $(V(D), \cC)$ be the corresponding connectoid. Then there is a homeomorphism between the end space of $D$ and the end space of $(V(D), \cC)$.   
\end{lemma}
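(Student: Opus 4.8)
The plan is to mirror the proof of \cref{lem:end_spaces_homeo_undirected}, replacing components by strong components throughout and routing the entire argument through the two direction theorems. This detour through directions is essential here: unlike in the undirected case, where $V(R)$ is literally a necklace, the vertex set of a solid directed ray need not be a necklace (a directed ray is not strongly connected), so there is no direct necklace correspondence $\alpha$ to exploit, and the abstract bijections must do all the work.

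First I would establish the structural fact that underpins everything: for every finite set $A \subseteq V(D)$ the components in $\cK(V(D) \setminus A)$ are exactly the vertex sets of the strong components of $D - A$. The inclusion from strong components to connectoid components holds because if $u, v$ lie in a common strong component of $D - A$, then a directed $u$--$v$ path together with a directed $v$--$u$ path forms a finite closed walk whose vertex set is strongly connected, hence a finite element of $\cC$ contained in $V(D) \setminus A$ that witnesses $u \sim_{V(D) \setminus A} v$. The reverse inclusion holds because any finite strongly connected subgraph inside $V(D) \setminus A$ makes its vertices mutually reachable in $D - A$. Matching the two maximality notions then identifies the two families of components.

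Next I would define the bijection $\beta_1$ between the directions of $D$ and the directions of $(V(D), \cC)$ exactly as in the undirected case: send a direction $d$ of $D$ to the map $A \mapsto V(d(A))$, which is a direction of the connectoid by the structural fact and because vertex-set inclusion is preserved under $d(B) \subseteq d(A)$; conversely send a direction of $(V(D), \cC)$ to the map assigning to each finite $A$ the strong component of $D - A$ induced on the chosen connectoid component. Composing $\beta_1$ with the B\"urger--Melcher bijection $\beta_0$ from \cref{thm:direction_directed} and the connectoid bijection $\beta_2$ from \cref{thm:direction} yields a bijection $\beta_2 \circ \beta_1 \circ \beta_0$ between $\Omega(D)$ and $\Omega(V(D), \cC)$. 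To see it is a homeomorphism, I would track basic open sets: fixing a finite $A \subseteq V(D)$ and a strong component $K$ of $D - A$, unwinding the definitions of $\beta_0, \beta_1, \beta_2$ and of $K(A, {-})$ in both end spaces shows that $\{\omega \in \Omega(D): K(A, \omega) = K\}$ is carried exactly onto $\{\psi \in \Omega(V(D), \cC): K(A, \psi) = V(K)\}$. Since the structural fact says the basic open sets of $\Omega(V(D), \cC)$ are precisely the sets of this latter form, while the basic open sets of $\Omega(D)$ are precisely those of the former form, both the bijection and its inverse are open.

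I expect the main obstacle to be the structural identification in the first step. Everything else is a faithful transcription of the undirected argument, but one must be careful that connectoid-connectivity of a vertex set really coincides with mutual reachability in the digraph, which is exactly where the closed-walk observation is needed. Once that is in place, the notions of component, direction, and the component map $K(A, {-})$ translate cleanly between $D$ and $(V(D), \cC)$, and the homeomorphism follows as above.
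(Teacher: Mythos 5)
Your proposal is correct and follows essentially the same route as the paper: the paper likewise composes the canonical bijection between directions of $D$ and directions of $(V(D), \cC)$ with the bijections of \cref{thm:direction_directed} and \cref{thm:direction}, and then verifies that basic open sets correspond, exactly as in the proof of \cref{lem:end_spaces_homeo_undirected}. The only difference is one of detail: you spell out the identification of components of $\cK(V(D)\setminus A)$ with vertex sets of strong components of $D - A$ (via the closed-walk observation), which the paper leaves implicit in the phrase ``similarly as for undirected graphs.''
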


\begin{proof}
    Similarly as for undirected graphs, there is a canonical bijection between the directions of a directed graph and the directions of its corresponding connectoid. Together with the bijections of \cref{thm:direction_directed} and \cref{thm:direction} we obtain a bijection $\alpha$ between the ends of a directed graph and the ends of its corresponding connectoid.
    As in the proof of \cref{lem:end_spaces_homeo_undirected}, we can show that all basic open sets of $\Omega(D)$ are mapped to basic open sets of $\Omega(V(D), \cC)$ and vice versa.
    Thus $\alpha$ is the desired homeomorphism.
\end{proof}

Now we can view ends of directed graphs in terms of necklaces. A \emph{directed necklace} is a directed graph whose vertex set is a necklace in the corresponding connectoid.\footnote{B\"urger and Melcher introduced directed necklaces in \cites{burger2020ends, burger2020ends2, burger2020ends3} under the name \emph{necklaces} using an equivalent definition.}
Two directed necklaces are \emph{equivalent} if their vertex sets are equivalent as necklaces.
By \cref{lem:end_spaces_homeo_directed}, the equivalence classes of directed necklaces give an equivalent definition of ends in directed graphs.

Finally, we remark that our direction theorem for connectoids, \cref{thm:direction}, implies the direction theorems for undirected graphs, \cref{thm:direction_undirected}, and directed graphs, \cref{thm:direction_directed}.

\section{Normal trees}\label{sec:normal_tree}
An essential property of normal trees in undirected graphs is the one-to-one correspondence between the ends of a normal tree and the ends in the closure of its vertex set in its host graph~\cite{burger2022duality}*{Lemma~2.11}.
For weak normal trees there is in general no one-to-one correspondence between its ends and the ends in the closure of its vertex set:
Let $G$ be a countable star and let $T$ be a ray containing all leaves of $G$ but not the center vertex.
Then $T$ is a weak normal tree of the corresponding connectoid $(V(G), \cC)$.
As the tree $T$ contains an end but $G$ does not, there does not exist a one-to-one correspondence between the ends of $T$ and the ends of~$(V(G),\mathcal{C})$.

Hence we demand an additional property for the rooted rays of normal trees:
A weak normal tree $T$ of $(S, \cC)$ is a \emph{normal tree} of $(S, \cC)$ if for every rooted ray $R$ in $T$ there is a necklace in $(S, \cC)$ that contains almost all vertices of $R$.
\cref{prop:converging} characterises when this condition holds for $V(R)$.

Note that every weak normal tree $T$ that is spanning is a normal tree:
Let $R$ be some rooted ray of $T$ and $Y \subseteq S$ some finite set.
As $T$ is spanning, $Y \subseteq S = V(T)$ and thus there is $r \in V(R)$ such that $\Up{r}_T \cap Y = \emptyset$.
By \cref{prop:equivalence_weak_normal_tree}, $\Up{r}$ is a component of $\cK(S \setminus \ODown{r}_T)$ and in particular, contained in a component of $\cK(S \setminus Y)$.
Since almost all elements of $V(R)$ are contained in $\Up{r}_T$, property \labelcref{itm:converging_2} of \cref{prop:converging} is satisfied.
Then \cref{prop:converging} implies that $T$ is indeed a normal tree.

The existence of normal trees of undirected graphs and normal trees of their corresponding connectoids are equivalent in the following sense:
\begin{prop}\label{prop:existence_of_normal_tree}
    Let $G$ be an undirected graph, let $(V(G), \cC)$ be the corresponding connectoid and let $U \subseteq V(G)$ be some set.
    Then the following properties are equivalent:
    \begin{enumerate}[label=(\alph*)]
        \item\label{itm:existenc_of_normal_tree1} there is a normal tree in $G$ that contains $U$,
        \item\label{itm:existenc_of_normal_tree2} there is a normal tree of $(V(G), \cC)$ that contains $U$, and
        \item\label{itm:existenc_of_normal_tree3} there is a weak normal tree of $(V(G), \cC)$ that contains $U$.
    \end{enumerate}
\end{prop}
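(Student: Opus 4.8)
The plan is to prove the cycle of implications \ref{itm:existenc_of_normal_tree1}$\Rightarrow$\ref{itm:existenc_of_normal_tree2}$\Rightarrow$\ref{itm:existenc_of_normal_tree3}$\Rightarrow$\ref{itm:existenc_of_normal_tree1}. The middle implication is immediate, since every normal tree of $(V(G),\cC)$ is by definition a weak normal tree. Throughout I would use that the connected sets of $(V(G),\cC)$ are exactly the vertex sets of connected subgraphs of $G$, so that a set $C\subseteq V(G)$ lies in $\cC$ iff $G[C]$ is connected, and that the components in $\cK(V(G)\setminus A)$ are the vertex sets of the components of $G-A$.

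For \ref{itm:existenc_of_normal_tree1}$\Rightarrow$\ref{itm:existenc_of_normal_tree2}, let $T$ be a normal tree in $G$ containing $U$ and regard $T$ as a rooted tree with $V(T)\subseteq V(G)$. I would verify the two conditions of a weak normal tree using the standard separation property of normal trees in graphs~\cite{DiestelBook2016}: for $\leq_T$-incomparable $u,v\in V(T)$ the set $\Down{u}_T\cap\Down{v}_T$ separates $u$ from $v$ in $G$, and each of its vertices is $\leq_T$-below both $u$ and $v$. Given $C\in\cC$ and $\leq_T$-incomparable $u,v\in C\cap V(T)$, the connected subgraph $G[C]$ contains a $u$--$v$ path, which must meet $\Down{u}_T\cap\Down{v}_T$; any such vertex $w$ lies in $C$ and satisfies $w\leq_T u,v$, giving the first condition. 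The second condition is witnessed by the vertex set of the $T$-path between $u$ and $v$, which lies in $\cC$, contains $u,v$ and avoids every $w<_T u$. Finally, every rooted ray $R$ of $T$ is a ray of $G$, so $V(R)$ is a necklace containing all vertices of $R$; hence $T$ is a normal tree of $(V(G),\cC)$ containing $U$.

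The main work, and the step I expect to be the obstacle, is \ref{itm:existenc_of_normal_tree3}$\Rightarrow$\ref{itm:existenc_of_normal_tree1}. Let $T$ be a weak normal tree of $(V(G),\cC)$ containing $U$. The second weak-normal-tree condition joins every vertex of $T$ to the root by a connected set, so $V(T)$ lies in a single component $G'$ of $G$; I therefore pass to the connected graph $H:=G'\cup T$ obtained by adding the edges of $T$ to $G'$. The crux is to show that $T$ is a genuine normal tree covering $U$ in $H$. Consider a $T$-path $P$ in $H$ with endvertices $u,v\in V(T)$. Since $T$-edges join only vertices of $V(T)$ while the interior of $P$ avoids $V(T)$, the path $P$ uses only edges of $G'$, so $V(P)\in\cC$ and $u,v\in V(P)\cap V(T)$. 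If $u,v$ were $\leq_T$-incomparable, the first weak-normal-tree condition would yield $w\in V(P)$ with $w\leq_T u,v$; but $w\leq_T u$ forces $w\in V(T)$, so $w$ must be an endpoint of $P$, i.e.\ $w\in\{u,v\}$, contradicting incomparability. Hence $T$ is normal in $H$.

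It then remains to descend from $H$ to $G$. Since $G'$ is a connected subgraph of $H$ with $U\subseteq V(T)\subseteq V(G')$, the closure of the existence of normal trees under connected subgraphs~\cite{pitz2021proof}*{Theorem~1.2} provides a normal tree $T'$ covering $U$ in $G'$. Finally $T'$ is also a normal tree covering $U$ in $G$: any $T'$-path in $G$ has both endvertices in $V(G')$, and being connected it stays inside the component $G'$, so its endvertices are $\leq_{T'}$-comparable by normality of $T'$ in $G'$. This gives \ref{itm:existenc_of_normal_tree1} and closes the cycle.
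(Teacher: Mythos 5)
Your proof is correct and takes essentially the same route as the paper: the same cycle of implications, with the first implication verified via the separation property of normal trees in graphs and rooted rays yielding necklaces, and the last implication obtained by observing that $T$ is a normal tree of the graph $G \cup T$ and then invoking Pitz's theorem that the existence of normal trees covering $U$ is preserved under passing to connected subgraphs. The only difference is one of detail: you spell out the $T$-path argument showing $T$ is normal in $G \cup T$ and the reduction to the component containing $U$, both of which the paper merely asserts.
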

\begin{proof}
	\begin{description}
		\item[\labelcref{itm:existenc_of_normal_tree1} implies \labelcref{itm:existenc_of_normal_tree2}]
			Let $T$ be a normal tree in $G$. Then for every two $\leq_T$-incomparable vertices $u, v$ and every connected subgraph $H$ containing $u$ and $v$, $H$ contains a vertex of $\Down{u}_T \cap \Down{v}_T$.
		Every two $\leq_T$-comparable vertices $u \leq_T v$ are connected via the path $T[\Up{u}_T \cap \Down{v}_T]$.
		Furthermore, for every rooted ray $R$ in $T$ the set $V(R)$ forms a necklace and thus $V(R)$ converges to an end. 
		Therefore $T$ is a normal tree of $(V(G), \cC)$.
		\item[\labelcref{itm:existenc_of_normal_tree2} implies \labelcref{itm:existenc_of_normal_tree3}]
			   Every normal tree is a weak normal tree.
		\item[\labelcref{itm:existenc_of_normal_tree3} implies \labelcref{itm:existenc_of_normal_tree1}]
				    Given a weak normal tree $T$ of $(V(G), \cC)$ containing $U$, the tree $T$ is a subgraph of $G \cup T$.
				    The tree $T$ is a normal tree in $G \cup T$. 
				    We can assume without loss of generality that $G$ is connected (otherwise consider the induced subconnectoid on the component containing $U$).
				Since the existence of normal trees is closed under taking connected subgraphs~\cite{pitz2021proof}*{Theorem~1.2}, $G$ also contains a normal tree covering $U$. \qedhere
	\end{description}
\end{proof}
The proof of \cref{prop:existence_of_normal_tree} shows that normal trees of undirected graphs form normal trees in the corresponding connectoid.
But a normal tree of a connectoid that corresponds to an undirected graph $G$ does not form a normal tree in $G$ in general (see \cref{fig:normal_tree_difference}):
Let $G$ be a comb with $V(G):= \{s_n, \ell_n: n \in \NN\}$ and $E(G):=\{\{s_n, s_{n + 1}\}, \{s_n, \ell_n\}: n \in \NN\}$. Then the ray $R$ following the sequence $\ell_1, s_1, \ell_2, s_2, \ell_3, s_3, \dots$ is a normal tree of the corresponding connectoid $(V(G), \cC)$. But the ray $R$ is not a normal tree in $G$ since $\{s_1, \ell_2\} \notin E(G)$.

\begin{center}
	\begin{figure}[ht]
		\begin{tikzpicture}
			
			 \foreach \x in {0,1,2,3,4} \draw[line width=6pt,opacity=0.6,color=CornflowerBlue,bend right=0,line cap=round] (\x,0) to (\x,1);
			 
			 \foreach \x in {0,1,2,3} \draw[line width=6pt,opacity=1,color=white,bend right=0,line cap=round] (\x,0) to (\x+1,1);
			 
			 \foreach \x in {0,1,2,3} \draw[line width=6pt,opacity=0.6,color=CornflowerBlue,bend right=0,line cap=round] (\x,0) to (\x+1,1);
			 
			 \draw[line width=6pt, path fading=east, opacity=0.6,color=CornflowerBlue,bend right=0,line cap=round] (4,0) to (5,1);
			
			  \foreach \x in {0,1,2,3,4} \foreach \y in {0,1} \draw[fill] (\x,\y) circle [radius=.05];
			  
			  \foreach \x in {0,1,2,3} \draw[thick] (\x,0) to (\x+1,0);
			  
			  \foreach \x in {0,1,2,3,4} \draw[thick] (\x,0) to (\x,1);
			  
			  \draw (-0.3,1) node {$r$};
			  
			  \draw[path fading=east] (4,0) -- (5,0);
		\end{tikzpicture}
\caption{A \textcolor{CornflowerBlue}{normal tree} of the comb rooted at $r$ that is not a subgraph of the comb.}
\label{fig:normal_tree_difference}
	\end{figure}
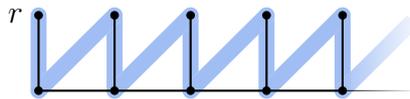
\end{center}

In the second paper of this series we show that for every connectoid (not just connectoids corresponding to undirected graphs) the existence of a normal tree covering a set $U$ is equivalent to the existence of a weak normal tree covering $U$.

\subsection{End-faithfulness of normal trees}
In this subsection we elaborate the correspondence between the ends of a connectoid and the ends of a normal spanning tree of this connectoid:

\EndFaithful*

We show a more general, local version of \cref{thm:endfaithfullness}.
We begin by introducing a topological space $|(S, \cC)|$ for a connectoid $(S, \cC)$ together with its ends similar to the space $|G|$ for an undirected graph $G$~\cite{diestel2004infinite}.
The topological space $|(S, \cC)|$ is defined on the set $S \cup \Omega(S, \cC)$ by the following basic open sets: for every $s \in S$ the set $\{s\}$ is open and for every end $\omega \in \Omega(S, \cC)$ and every finite set $X \subseteq S$ the set $K(X, \omega) \cup \Omega(X, \omega)$ is open.

Now, the \emph{closure} of a set $U \subseteq S \cup \Omega(S, \cC)$ is defined as in $|(S, \cC)|$, i.e.\ an end $\omega$ is in the closure of $U$ if $(K(X, \omega)\cup \Omega(X, \omega)) \cap U \neq \emptyset$ for every finite set $X \subseteq S$.

\begin{thm}\label{thm:endfaithfullness_local}
Let $(S, \cC)$ be a connectoid and $T$ a normal tree of $(S, \cC)$.
Then for every end $\omega \in \Omega(T)$ there exists an end $\eta(\omega) \in \Omega(S, \cC)$ to which the vertex set of the rooted ray $R_\omega \subseteq T$ converges.
The map $\eta$ sending $\omega$ to $\eta(\omega)$ is a bijection between the ends of $T$ and the ends in the closure of $V(T)$.
\end{thm}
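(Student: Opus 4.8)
The plan is to verify the three assertions in turn: that $\eta$ is well-defined, that it is injective, and that its image is exactly the set of ends in the closure of $V(T)$. I use throughout that the ends of the tree $T$ correspond canonically to its rooted rays, so each $\omega \in \Omega(T)$ determines the rooted ray $R_\omega$, and that by \cref{prop:equivalence_weak_normal_tree} we have $K_t^T \cap V(T) = \Up{t}_T$ for every $t \in T$. I also record the immediate observation that, since ends are not elements of $S$, an end $\psi$ lies in the closure of $V(T)$ if and only if $K(X, \psi) \cap V(T) \neq \emptyset$ for every finite $X \subseteq S$. For well-definedness: as $T$ is a \emph{normal} tree, for each $\omega$ there is a necklace containing almost all vertices of $R_\omega$, and since $V(R_\omega)$ is countably infinite, \cref{prop:converging} yields a unique end $\eta(\omega)$ to which $V(R_\omega)$ converges; for every finite $X$ the component $K(X, \eta(\omega))$ then contains almost all of $V(R_\omega) \subseteq V(T)$, so $\eta(\omega)$ lies in the closure of $V(T)$.

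For injectivity, suppose $\omega \neq \omega'$. Their rooted rays share a finite initial segment and then split at a last common vertex $t$, continuing through distinct children $t_1$ (on $R_\omega$) and $t_2$ (on $R_{\omega'}$). Since $\ODown{t_1}_T = \Down{t}_T = \ODown{t_2}_T$, both $K_{t_1}^T$ and $K_{t_2}^T$ are components of $\cK(S \setminus \Down{t}_T)$, and they are distinct because $K_{t_i}^T \cap V(T) = \Up{t_i}_T$ while $\Up{t_1}_T \cap \Up{t_2}_T = \emptyset$. As almost all of $V(R_\omega)$ lies in $\Up{t_1}_T \subseteq K_{t_1}^T$, we get $K(\Down{t}_T, \eta(\omega)) = K_{t_1}^T \neq K_{t_2}^T = K(\Down{t}_T, \eta(\omega'))$, so $\eta(\omega) \neq \eta(\omega')$.

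Surjectivity is the main obstacle. Given $\psi$ in the closure of $V(T)$, I first build a rooted ray $R = r_0 r_1 \cdots$, with $r_0$ the root, by following $\psi$ through the tree: writing $A_n := \Down{r_n}_T$, I maintain $K(A_{n-1}, \psi) = K_{r_n}^T$. This holds at the root, since $V(T) \subseteq K_{r_0}^T$ forces $K(\emptyset, \psi) = K_{r_0}^T$. For the inductive step, $K(A_n, \psi) \subseteq K_{r_n}^T$ avoids $r_n$ and, as $\psi$ lies in the closure, contains a tree vertex, which must therefore lie in $\Up{r_n}_T \setminus \{r_n\}$, hence in $\Up{c}_T \subseteq K_c^T$ for a child $c$ of $r_n$; this identifies the unique admissible child $r_{n+1} := c$ (in particular $r_n$ is not a leaf), and forces $K(A_n, \psi) = K_{r_{n+1}}^T$. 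Let $\omega$ be the end of $T$ containing $R$.

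The crux is to upgrade the equality $K(A_n, \psi) = K_{r_{n+1}}^T$, known only on the sets $A_n$, to $K(X, \psi) = K(X, \eta(\omega))$ for \emph{all} finite $X$, which yields $\psi = \eta(\omega)$ since distinct ends are separated by some finite set. For this I prove the key lemma that every connected set $C$ with $C \cap V(T) \neq \emptyset$ has a unique $\leq_T$-minimal vertex $t_C$: two incomparable minimal vertices would, by the first weak-normal-tree axiom, admit a common $\leq_T$-lower bound inside $C$, contradicting minimality. Write $t_X$ for the minimal tree vertex of $K(X, \psi)$. Applying the lemma to $K(X \cup A_n, \psi) \subseteq K(X, \psi) \cap K_{r_{n+1}}^T$, its minimal vertex is $\geq_T$ both $t_X$ and $r_{n+1}$; since two tree vertices with a common upper bound are $\leq_T$-comparable, and $r_{n+1}$ eventually leaves the finite set $\Down{t_X}_T$, it follows that $t_X \leq_T r_{n+1}$ for large $n$, so $t_X$ lies on $R$. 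Running the same argument with $X \cup A_m$ in place of $X$ shows that $K(X, \psi)$ contains ray vertices $r_{m'}$ with $m'$ arbitrarily large, hence infinitely many vertices of $R$. As $V(R)$ converges to $\eta(\omega)$, almost all $r_m$ lie in $K(X, \eta(\omega))$, and any component of $\cK(S \setminus X)$ other than $K(X, \eta(\omega))$ can meet $V(R)$ only finitely; therefore $K(X, \psi) = K(X, \eta(\omega))$. This gives $\eta(\omega) = \psi$ and completes the proof.
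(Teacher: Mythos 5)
Your proof is correct, and it largely parallels the paper's own: well-definedness is argued identically; your injectivity argument via the two children $t_1,t_2$ at the last common vertex is the paper's argument with the separator $V(R_\omega\cap R_{\omega'})=\Down{t}_T$ made pointwise; and the recursive construction of the rooted ray with invariant $K(\ODown{r_n}_T,\psi)=K_{r_n}^T$ is exactly the paper's. The genuine divergence is in closing the surjectivity argument, i.e.\ showing $\eta(\omega)=\psi$ for the end $\omega$ of the constructed ray. The paper proceeds by contradiction: if a finite $X$ separated $\eta(\omega)$ from $\psi$, it enlarges $X$ by the finitely many ray vertices outside $K(X,\eta(\omega))$, so that $K(X\cup Y,\psi)$ avoids the ray, and then invokes the closure hypothesis twice to produce $u,v\in K(X\cup Y,\psi)\cap V(T)$ whose common $\leq_T$-ancestors all lie on the ray; the first normality axiom then yields the contradiction. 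You instead isolate a key lemma --- for every connected set $C$ meeting $V(T)$, the set $C\cap V(T)$ has a unique $\leq_T$-minimal element --- and argue positively: the least tree vertex of $K(X\cup\Down{r_m}_T,\psi)$ is comparable with the ray vertices and is forced onto the ray above $r_{m+1}$, so $K(X,\psi)$ meets the ray infinitely often and hence equals $K(X,\eta(\omega))$ for every finite $X$. One small point of care: your comparability step uses that all tree vertices of $K(X,\psi)$ lie above $t_X$, i.e.\ that the unique minimal element is in fact a minimum; this does follow from your incomparability argument together with well-foundedness of $\leq_T$, but it deserves a sentence. Both routes rest on the same normality axiom; the paper's is shorter, while yours avoids the contradiction and isolates a reusable structural fact about normal trees of connectoids, in the same spirit as \cref{prop:neighbourhood}.
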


\begin{proof}
    Let $\omega \in \Omega(T)$ be arbitrary. By the definition of normal tree, the set $V(R_\omega)$ converges to an end $\eta(\omega)$. Since $K(X, \eta(\omega))$ contains an element of $V(R_\omega) \subseteq V(T)$ for every finite set $X \subseteq S$, the end $\eta(\omega)$ is in the closure of $V(T)$. Thus $\eta$ maps into the closure of $V(T)$.

    Let $\omega' \in \Omega(T) \setminus \{ \omega \}$ be arbitrary.
    Note that the set $V(R_\omega \cap R_{\omega'})$ separates $V(R_\omega) \setminus V(R_\omega \cap R_{\omega'})$ and $V(R_{\omega'}) \setminus V(R_\omega \cap R_{\omega'})$ in $(S, \cC)$ since $T$ is normal.
    Thus the component $K(V(R_\omega \cap R_{\omega'}), \eta(\omega))$ containing almost all of $V(R_\omega)$ and the component $K(V(R_\omega \cap R_{\omega'}), \eta(\omega'))$ containing almost all of $V(R_{\omega'})$ are distinct.
    In particular, $\eta(\omega) \neq \eta(\omega')$. This shows that $\eta$ is injective.
    
    It remains to prove that $\eta$ is surjective. Let $\nu$ be some end in the closure of $V(T)$. We construct a rooted ray $(t_n)_{n \in \NN}$ in $T$ with the property $K(\ODown{t_n}_T, \nu) = K_{t_n}^T$. We set $t_1:= r$ and note that $K( \emptyset, \nu) = K_r^T$ since $\nu$ is in the closure of $V(T)$ and as $V(T) \subseteq K_r^T$.
    
    We assume that $t_n$ has been defined.
    As $\nu$ is an element in the closure of $V(T)$, the connected set $K(\Down{t_n}_T, \nu)$ contains some element of $V(T)$.
    Note that $K(\Down{t_n}_T, \nu) \cap V(T) \subseteq K(\ODown{t_n}_T, \nu) \cap V(T) = K_{t_n}^T \cap V(T) = \Up{t_n}$ holds.
    Thus there exists $x \in K(\Down{t_n}_T, \nu) \cap \OUp{t_n}$.
    Let $t_{n +1}$ be the child of $t_n$ with $x \in \Up{t_{n +1}}_T$.
    Then $K(\ODown{t_{n + 1}}_T, \nu) = K(\Down{t_n}_T, \nu) = K_{t_{n + 1}}^T$ since $x \in \Up{t_{n+1}}_T \subseteq K_{t_{n + 1}}^T$ by  \cref{prop:equivalence_weak_normal_tree}.
    This finishes the construction of the rooted ray $(t_n)_{n \in \NN}$.

    Let $\omega \in \Omega(T)$ be the end containing the ray $(t_n)_{n \in \NN}$. We show that $\eta(\omega)= \nu$. Then $\eta$ is surjective.
    Suppose for a contradiction that $\eta(\omega) \neq \nu$.
    There is a finite set $X \subseteq S$ such that $K(X, \eta(\omega)) \neq K(X, \nu)$.
    Note that the set $Y:= \{t_n: n \in \NN \} \setminus K(X, \eta(\omega))$ is finite since $\{t_n: n \in \NN \}$ converges to $\eta(\omega)$.
    Thus the component $K(X \cup Y, \nu)$ avoids $\{t_n: n \in \NN \}$ by the choice of $Y$ and since $K(X \cup Y, \nu) \subseteq K(X, \nu)$.
    We show that $K(X \cup Y, \nu)$ contains two elements $u, v \in T$ such that $\Down{u}_T \cap \Down{v}_T \subseteq \{t_n: n \in \NN \}$, contradicting that $K(X \cup Y, \nu)$ is connected as $T$ is normal.

	Since $\nu$ is in the closure of $V(T)$ there exists some $u \in K(X \cup Y, \nu) \cap V(T)$.
	Pick $m \in \NN$ such that
    $u \notin \Up{t_m}_T$.
    Let $v$ be some element of $K(X \cup Y \cup \ODown{t_m}, \nu) \cap V(T)$, which exists since $\nu$ is in the closure of $V(T)$.
    Note that $K(\ODown{t_m}_T, \nu) \cap V(T) = K_{t_m}^T \cap V(T) = \Up{t_m}_T$ by choice of $t_m$ and since $T$ is normal.
    This implies $v \in K(X \cup Y \cup \ODown{t_m}, \nu) \cap V(T) \subseteq K(\ODown{t_m}_T, \nu) \cap V(T) = \Up{t_m}_T$.
    Thus $\Down{u}_T \cap \Down{v}_T \subseteq \{t_n: n \in \NN \}$, which completes the proof.
\end{proof}

\section{Approximating connectoids by normal trees}\label{sec:approx}
In this section we show that normal trees can cover connectoids up to arbitrarily small open sets around their ends:

\Approximate*

\subsection{Extending normal trees}
We begin by investigating how normal trees can be extended.
Let $T$ be a weak normal tree of a connectoid $(S, \cC)$ and let $K$ be a component in $\cK(S \setminus V(T))$.
We call the set $N_K:= \{t \in T: K_t^T \supseteq K \}$ the \emph{neighbourhood} of $K$ in $T$.

\begin{prop}\label{prop:neighbourhood}
	Let $(S, \cC)$ be a connectoid, let $T$ be a weak normal tree of $(S, \cC)$ and let $K$ be a component in $\cK(S \setminus V(T))$.
	Then $K$ is a component in $\cK(S \setminus N_K)$ and $N_K$ is a branch.
\end{prop}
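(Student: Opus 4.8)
The plan is to prove the two assertions in turn: first that $N_K$ is a branch, i.e.\ a $\leq_T$-down-closed chain, and then that $K$ is a component of $\cK(S \setminus N_K)$, reusing along the way the containment facts established for the branch part. Throughout I rely on the characterisation $K_s^T \cap V(T) = \Up{s}_T$ from \cref{prop:equivalence_weak_normal_tree} and on the first defining property of weak normal trees.

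For down-closedness, I take $t \in N_K$ and $s <_T t$ and first show $K_t^T \subseteq K_s^T$. By \cref{prop:equivalence_weak_normal_tree} we have $t \in \Up{s}_T = K_s^T \cap V(T)$, so $t \in K_s^T$; since $\ODown{s}_T \subseteq \ODown{t}_T$, the connected set $K_t^T$ contains $t$ and avoids $\ODown{s}_T$, hence lies in the component of $S \setminus \ODown{s}_T$ through $t$, which is $K_s^T$. Then $K \subseteq K_t^T \subseteq K_s^T$ gives $s \in N_K$. For the chain property I argue by contradiction: if $u,v \in N_K$ are $\leq_T$-incomparable, I fix $k \in K \subseteq K_u^T \cap K_v^T$ and, by connectivity, pick finite connected sets $F \subseteq K_u^T$ and $F' \subseteq K_v^T$ with $u,k \in F$ and $v,k \in F'$. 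Their union $F \cup F'$ is connected and contains the incomparable pair $u,v$, so the first weak-normal-tree condition yields $w \in F \cup F'$ with $w \leq_T u$ and $w \leq_T v$. Incomparability forces $w \in \ODown{u}_T \cap \ODown{v}_T$; but $F$ avoids $\ODown{u}_T$ and $F'$ avoids $\ODown{v}_T$, so $w \notin F \cup F'$, a contradiction. Together these show $N_K$ is a branch.

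For the component statement, I note $K$ is connected with $K \subseteq S \setminus V(T) \subseteq S \setminus N_K$, so I let $\hat K$ be the component of $\cK(S \setminus N_K)$ containing $K$ and aim to prove $\hat K = K$. It suffices to show $\hat K \cap V(T) = \emptyset$: then $\hat K$ is a connected subset of $S \setminus V(T)$ containing $K$, so $\hat K = K$ by maximality of $K$ in $\cK(S \setminus V(T))$. Suppose instead $\hat K \cap V(T) \neq \emptyset$. Since every down-set $\Down{t}_T$ in a rooted tree is finite, the tree order has no infinite descending chains, so $\hat K \cap V(T)$ has a $\leq_T$-minimal element $t$. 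By minimality $\ODown{t}_T \cap \hat K = \emptyset$, so $\hat K$ is a connected subset of $S \setminus \ODown{t}_T$ containing $t$, whence $\hat K \subseteq K_t^T$ and in particular $K \subseteq K_t^T$, i.e.\ $t \in N_K$. This contradicts $t \in \hat K \subseteq S \setminus N_K$, completing the argument.

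I expect the maximality direction to be the main obstacle: one must rule out that the component of $S \setminus N_K$ through $K$ reaches back up into the tree. The decisive device is the well-foundedness of the tree order coming from the finiteness of down-sets, which supplies a $\leq_T$-minimal tree-vertex in the putative larger component and then lets the very definition of $K_t^T$ force that vertex into $N_K$. The branch part, by contrast, is a direct application of the two weak-normal-tree properties.
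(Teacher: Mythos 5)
Your proof is correct and follows essentially the same route as the paper's: the same nested-component argument $K \subseteq K_t^T \subseteq K_s^T$ for down-closedness, the same use of the first weak-normal-tree condition to rule out incomparable elements of $N_K$, and the same key device for the component claim, namely taking a $\leq_T$-minimal tree vertex $t$ in the putative larger component and concluding $t \in N_K$, a contradiction. The only differences are expository (you spell out well-foundedness of $\leq_T$ and replace the paper's implicit use of the union $K_u^T \cup K_v^T$ by finite connected sets through a point of $K$), not structural.
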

\begin{proof}
	Suppose for a contradiction that $K$ is not a component in $\cK(S \setminus N_K)$.
	Then $K$ is a proper subset of a component $K' \in \cK(S \setminus N_K)$.
	Thus $K'$ has to intersect $V(T) \setminus N_K$.
	Let $t \in (V(T) \setminus N_K) \cap K'$ be $\leq_T$-minimal.
	Then $K' \cap \ODown{t}_T = \emptyset$ and thus $K'$ is subset of $K_t^T$.
	This implies $K \subset K' \subseteq K_t^T$ and therefore $t \in N_K$, a contradiction.
	
	For $t \in N_K$ and $s \in \ODown{t}_T$ the inclusions $K_s^T \supseteq K_t^T \supseteq K$ hold, which implies $s \in N_K$.
	Thus $N_K$ is $\leq_T$-down-closed.
	
	For two $\leq_T$-incomparable elements $s, t$, there is no connected set containing $s$ and $t$ that avoids $\Down{s}_T \cap \Down{t}_T$ by normality of $T$.
	This implies that $K_s^T$ and $K_t^T$ are disjoint and in particular, at most one of $s$ and $t$ is contained in $N_K$.
	Thus the elements of $N_K$ are pairwise $\leq_T$-comparable.
	This shows that $N_K$ is a branch.
\end{proof}

Let $(S, \cC)$ be a connected connectoid and let $\hat S \subseteq S$ be some subset.
A component $K \in \cK(S \setminus \hat S)$ has \emph{finite adhesion} to $\hat S$ if there exists a finite set $X \subseteq \hat S$ such that $K$ is a component in $\cK(S \setminus X)$.
Otherwise, the component $K$ has \emph{infinite adhesion} to $\hat S$.

\begin{prop}\label{prop:finite_neighbourhood}
	Let $(S, \cC)$ be a connected connectoid and $T$ a weak normal tree of $(S, \cC)$.
	Then a component $K \in \cK(S \setminus V(T))$ has finite adhesion to $V(T)$ if and only if $N_K$ is finite.
\end{prop}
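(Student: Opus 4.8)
The plan is to prove the two implications separately, with essentially all of the content sitting in the forward direction.

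For the backward direction (if $N_K$ is finite then $K$ has finite adhesion) I would simply invoke \cref{prop:neighbourhood}, which tells us that $K$ is a component in $\cK(S \setminus N_K)$. Since $N_K$ is a branch of $T$ and hence $N_K \subseteq V(T)$, a finite $N_K$ is itself a finite subset $X := N_K$ of $V(T)$ witnessing that $K$ is a component in $\cK(S \setminus X)$; this is exactly finite adhesion to $V(T)$.

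For the forward direction I would assume finite adhesion and fix a finite $X \subseteq V(T)$ with $K$ a component of $\cK(S \setminus X)$. The crucial step is the claim that every $t \in N_K$ satisfies $X \cap \Up{t}_T \neq \emptyset$. To prove it I would argue by contradiction: if some $t \in N_K$ had $X \cap \Up{t}_T = \emptyset$, then, using the identity $K_t^T \cap V(T) = \Up{t}_T$ from \cref{prop:equivalence_weak_normal_tree} together with $X \subseteq V(T)$, we would get $X \cap K_t^T = \emptyset$, so the connected set $K_t^T$ lies in $S \setminus X$. Since $t \in N_K$ gives $K \subseteq K_t^T$, any $k \in K$ witnesses that $K_t^T$ is a connected subset of $S \setminus X$ meeting the component $K$, whence $K_t^T \subseteq K$ by maximality of $K$; combined with $K \subseteq K_t^T$ this forces $K_t^T = K$. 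But then $t \in K_t^T = K \subseteq S \setminus V(T)$, contradicting $t \in V(T)$, which establishes the claim.

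Given the claim, each $t \in N_K$ lies below some $x \in X$, i.e.\ $t \in \Down{x}_T$, so $N_K \subseteq \bigcup_{x \in X} \Down{x}_T$. Each down-set $\Down{x}_T$ is the finite path from the root of $T$ to $x$, and $X$ is finite, so this union is finite and therefore $N_K$ is finite, completing the proof. The main (and really the only) obstacle I expect is isolating the claim $X \cap \Up{t}_T \neq \emptyset$: the underlying point is that a weak normal tree forces the separator $X$ to meet $K_t^T$ only inside the up-set $\Up{t}_T$, so if $X$ missed $\Up{t}_T$ entirely it could not separate $K_t^T$ from $K$; everything after that is bookkeeping with $K_t^T \cap V(T) = \Up{t}_T$ and the finiteness of root-paths in a tree.
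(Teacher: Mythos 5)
Your proof is correct and follows essentially the same route as the paper: the backward direction via \cref{prop:neighbourhood}, and the forward direction via the key observation that any $t \in N_K$ with $X \cap \Up{t}_T = \emptyset$ would give $K_t^T \cap X = \emptyset$ (using $K_t^T \cap V(T) = \Up{t}_T$ from \cref{prop:equivalence_weak_normal_tree}), forcing $K_t^T = K$ by maximality and hence $t \in K$, contradicting $K \cap V(T) = \emptyset$. The only cosmetic difference is that you phrase this as the containment $N_K \subseteq \bigcup_{x \in X} \Down{x}_T$, whereas the paper argues contrapositively by picking $t \in N_K \setminus \Down{X}_T$ when $N_K$ is infinite.
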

\begin{proof}
	Suppose for a contradiction that $K \in \cK(S \setminus V(T))$ has finite adhesion and $N_K$ is infinite.
	Let $X \subseteq V(T)$ be a finite subset of $V(T)$ such that $K \in \cK(S \setminus X)$ and pick $t \in N_K \setminus \Down{X}_T$.
	Then $K_t^T$ is contained in a component in $\cK(S \setminus X)$ since $K_t^T \cap X \subseteq K_t^T \cap V(T) \cap X = \Up{t}_T \cap X = \emptyset$.
	That component must be $K$ since $t \in N_K$, so $t \in N_K$, contradicting $K \cap V(T) = \emptyset$.
	
	If $N_K$ is finite, $K$ has finite adhesion since $K \in \cK(S \setminus N_K)$ by \cref{prop:neighbourhood}.
\end{proof}

\begin{prop} \label{prop:extension_normal_tree}
	Let $(S, \cC)$ be a connected connectoid and let $T$ be a normal tree of $(S, \cC)$ rooted at $r$.
	Let $T_K$ be a (possibly trivial) normal tree of the induced subconnectoid on $K$ for every component $K \in  \cK(S \setminus V(T))$.
	Suppose that every component in $\cK(S \setminus V(T))$ has finite adhesion to $V(T)$.
	Then there exists a normal tree $T'$ of $(S, \cC)$ rooted at $r$ with $T \subseteq T'$ such that $T' - T$ is the disjoint union of the trees $T_K$ for $K \in \cK(S \setminus V(T))$.
	
	Furthermore, if $T$ and all $T_K$ for $K \in \cK(S \setminus V(T))$ are rayless, then $T'$ is rayless.
\end{prop}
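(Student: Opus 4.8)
The plan is to attach each tree $T_K$ to $T$ at a single, canonically chosen vertex, and then to verify normality of the resulting tree $T'$ through the component characterisation of \cref{prop:equivalence_weak_normal_tree}. First I would fix the attachment points. For each component $K \in \cK(S \setminus V(T))$ the neighbourhood $N_K$ is a branch of $T$ by \cref{prop:neighbourhood}, and it is finite by \cref{prop:finite_neighbourhood} since $K$ has finite adhesion to $V(T)$. As $r \in N_K$ (because $K_r^T = S \supseteq K$ by connectedness), the set $N_K$ is a nonempty finite chain and hence has a $\leq_T$-maximal element $t_K$, with $N_K = \Down{t_K}_T$. I define $T'$ by taking the disjoint union of $T$ and all nontrivial $T_K$ and adding, for each such $K$, an edge between $t_K$ and the root $r_K$ of $T_K$, keeping $r$ as the root. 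Then $V(T') = V(T) \cup \bigcup_K V(T_K) \subseteq S$, the only edges joining $T$ to the $T_K$ are the bridges $t_K r_K$, and therefore $T' - T = \bigsqcup_K T_K$, as required. The tree order satisfies $\Up{t}_{T'} = \Up{t}_T \cup \bigcup_{K \colon t \in N_K} V(T_K)$ for $t \in V(T)$ and $\Up{t}_{T'} = \Up{t}_{T_K}$ for $t \in T_K$.

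Next I would check that $T'$ is a weak normal tree by establishing $K_t^{T'} \cap V(T') = \Up{t}_{T'}$ for every $t \in T'$, which by \cref{prop:equivalence_weak_normal_tree} suffices. If $t \in V(T)$, then $\ODown{t}_{T'} = \ODown{t}_T$, so $K_t^{T'} = K_t^T$; since each $V(T_K) \subseteq K$ and, directly from the definition of $N_K$, one has $K \subseteq K_t^T$ exactly when $t \in N_K$, a short computation gives $K_t^T \cap V(T') = \Up{t}_T \cup \bigcup_{K \colon t \in N_K} V(T_K) = \Up{t}_{T'}$. If $t \in T_K$, then $\ODown{t}_{T'} = N_K \cup \ODown{t}_{T_K}$; since $K$ is a component of $\cK(S \setminus N_K)$ by \cref{prop:neighbourhood} and $\ODown{t}_{T_K} \subseteq K$, the component $K_t^{T'}$ is confined to $K$ and thus coincides with the component of $t$ in $K \setminus \ODown{t}_{T_K}$ computed in the induced subconnectoid on $K$. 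Applying \cref{prop:equivalence_weak_normal_tree} to the weak normal tree $T_K$ of that subconnectoid then yields $K_t^{T'} \cap V(T') = K_t^{T'} \cap V(T_K) = \Up{t}_{T_K} = \Up{t}_{T'}$.

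For normality I would analyse an arbitrary rooted ray $R$ of $T'$. It starts at $r$, and since a ray cannot reuse the bridge by which it enters some $T_K$, it either stays in $T$ forever or, after ascending through the finite chain $N_K$, has a tail that is a rooted ray of some $T_K$. In the first case normality of $T$ supplies a necklace containing almost all of $V(R)$. In the second case normality of $T_K$ supplies a necklace in the induced subconnectoid on $K$, which is also a necklace in $(S, \cC)$ because a witness in a subconnectoid is a witness in the ambient connectoid; this necklace contains almost all of the tail, hence almost all of $V(R)$, as $V(R) \cap V(T) \subseteq N_K$ is finite. So $T'$ is a normal tree.

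The same block structure settles the rayless case: the bridges $t_K r_K$ are cut edges, each crossed at most once by any ray, so every ray of $T'$ has a tail contained in a single block $T$ or $T_K$; if all of these are rayless then $T'$ has no ray. I expect the main obstacle to be the second case of the weak-normal verification, namely identifying $K_t^{T'}$ with the component of $t$ computed inside the subconnectoid on $K$. The decisive inputs there are that finite adhesion forces $N_K$ to be finite, so that the attachment vertex $t_K$ exists at all, and that $K$ is a component of $\cK(S \setminus N_K)$, which is exactly what keeps $K_t^{T'}$ inside $K$.
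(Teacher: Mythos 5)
Your construction is exactly the paper's: attach each nontrivial $T_K$ on top of the $\leq_T$-maximal element $t_K$ of the finite branch $N_K$, relying on \cref{prop:neighbourhood} and \cref{prop:finite_neighbourhood} (you are in fact slightly more careful than the paper in noting $N_K \neq \emptyset$ via $K_r^T = S$). Where you genuinely diverge is the verification of weak normality: the paper checks the two defining conditions directly --- for $\leq_{T'}$-incomparable $u_1, u_2$ in a connected set $C$ it projects each $u_i \in T' - T$ down to a vertex $u_i' \in N_{K_i}$ that must lie in $C$, then invokes normality of $T$; for $v \leq_{T'} w$ it shows $\Up{v}_{T'} \subseteq K_v^T$ --- whereas you verify the equivalent component characterisation $K_t^{T'} \cap V(T') = \Up{t}_{T'}$ of \cref{prop:equivalence_weak_normal_tree} by computing $K_t^{T'}$ in the two cases $t \in V(T)$ and $t \in T_K$. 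Your route is cleaner in the second case, since confinement of $K_t^{T'}$ to $K$ (from $K \in \cK(S \setminus N_K)$) lets you quote \cref{prop:equivalence_weak_normal_tree} for $T_K$ inside the induced subconnectoid, subsuming both weak-normality conditions at once; the cost is that your ``short computation'' in the first case silently needs that $t \notin N_K$ forces $K \cap K_t^T = \emptyset$ rather than merely $K \not\subseteq K_t^T$ --- true, but it deserves the one-line justification that $K$ is connected and avoids $\ODown{t}_T$, hence lies in a single component of $\cK(S \setminus \ODown{t}_T)$, which cannot be $K_t^T$. The necklace argument for rooted rays and the rayless conclusion via the block structure coincide with the paper's (which in fact leaves the ``furthermore'' rayless claim implicit, as you nearly do). So: correct, same decomposition and supporting lemmas, with a characterisation-based rather than definition-based check of weak normality.
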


\begin{proof}
	Let $K \in \cK(S \setminus V(T))$ with $T_K \neq \emptyset$.
	By \cref{prop:neighbourhood} and \cref{prop:finite_neighbourhood}, the elements of $N_K$ form a finite rooted path in $T$ and $K$ is a component in $\cK(S \setminus N_K)$.
	Let $t_K$ be the $\leq_T$-maximal element of $N_K$.
	
	Let $T'$ be the tree obtained from $T$ by placing for every such $K \in \cK(S \setminus V(T))$ the tree $T_K$ on top of $t_K$.
	We show that $T'$ is a weak normal tree and deduce that $T'$ is a normal tree.
	
	Firstly, let $C$ be some connected set and let $u_1, u_2 \in T'$ be two $\leq_{T'}$-incomparable elements in $C$.
	If $C \subseteq K$ for some component $K \in \cK(S \setminus V(T))$, then $u_1, u_2 \in K$ and $C$ contains an element of $\Down{u_1}_{T_K} \cap \Down{u_2}_{T_K} \subseteq \Down{u_1}_{T'} \cap \Down{u_2}_{T'}$ since $T_K$ is normal in the induced subconnectoid on $K$.
	Thus we can assume that $C$ is not contained in a component of $\cK(S \setminus V(T))$.
	If $u_i \in T' - T$, then there is a component $K_i \in \cK(S \setminus V(T))$ containing $u_i$ for $i \in \{1,2\}$.
	Then $C$ contains an element $u_i' \in N_{K_i} = \Down{u_i}_{T'} \cap V(T)$ since $K \in \cK(S \setminus N_{K_i})$ and as $C \not\subseteq K_i$.
	If $u_i \in T$, set $u_i':= u_i$ for $i \in \{1,2\}$.
	Since $T$ is normal, $C$ contains an element of $\Down{u_1'} _T \cap \Down{u_2'}_T  \subseteq \Down{u_1}_{T'} \cap \Down{u_2}_{T'}$.
	
	Secondly, let $v, w \in T'$ be arbitrary with $v \leq_{T'} w$.
	If $v \in T' - T$, then $v, w$ are contained in a component $K \in \cK(S \setminus V(T))$ and in particular, $v \leq_{T_K} w$.
	Since $T_K$ is normal in $K$, there is a connected set $C \subseteq K$ avoiding $\ODown{v}_{T_K}$ that contains $v$ and $w$.
	Then $C$ avoids $\ODown{v}_{T'}$ since $\ODown{v}_{T'} \cap K = \ODown{v}_{T_K}$.
	
	If $v \in T$, we consider the connected set $K_v^T$, which avoids $\ODown{v}_T = \ODown{v}_{T'}$.
	By \cref{prop:equivalence_weak_normal_tree}, $K_v^T$ contains $\Up{v}_T$.
	By construction, every element of $\Up{v}_{T'} \setminus \Up{v}_T$ is contained in a component $K \in \cK(S \setminus V(T))$ with $v \in N_K$.
	The property $v \in N_K$ implies that $K \subseteq K_v^T$, which proves that $\Up{v}_{T'} \setminus \Up{v}_T$ is contained in $K_v^T$.
	This shows that every $w \geq_{T'} v$ is contained in the connected set $K_v^T$ and shows that $T'$ is a weak normal tree.
	
	Finally, let $R$ be an arbitrary rooted ray in $T'$.
	The ray $R$ has a tail $R'$ that is either contained in $T$ or contained in some $T_K$, by construction.
	As $T$ and all $T_K$ are normal, there exists a necklace $N$ of $(S, \cC)$ that contains almost all vertices of $R'$.
	In particular, $N$ contains almost all vertices of $R$ and thus $R$ converges to an end.
	This proves that $T'$ is a normal tree.
\end{proof}

\begin{cor}\label{cor:finite_normal_tree}
	Let $(S, \cC)$ be a connectoid, let $s \in S$ be some element and $X \subseteq S$ a finite set.
	Then there exists a finite normal tree of $(S, \cC)$ rooted at $s$ with $V(T) = X \cup \{s\}$.
\end{cor}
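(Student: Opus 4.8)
The plan is to argue by induction on $\cardinality{X \cup \{s\}}$, using \cref{prop:extension_normal_tree} as the engine: starting from the one-vertex tree on $\{s\}$, I would hang recursively constructed finite normal trees, one on each component of $\cK(S \setminus \{s\})$, directly above the root. Since a finite tree contains no rooted ray, the necklace condition in the definition of a normal tree is vacuous; hence any finite weak normal tree is automatically normal, and \cref{prop:equivalence_weak_normal_tree} may be used freely as the working characterisation. Writing $Y := X \cup \{s\}$, the base case $Y = \{s\}$ is the single-vertex tree rooted at $s$, which is trivially normal.

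For the inductive step I would set $T_0$ to be the one-vertex tree on $\{s\}$ and examine $\cK(S \setminus V(T_0)) = \cK(S \setminus \{s\})$. Every component $K$ in this family has finite adhesion to $V(T_0) = \{s\}$, since the finite set $\{s\} \subseteq V(T_0)$ already witnesses $K \in \cK(S \setminus \{s\})$; thus the adhesion hypothesis of \cref{prop:extension_normal_tree} holds automatically. For each component $K$ with $K \cap X = \emptyset$ I would take $T_K$ trivial, and for each $K$ with $K \cap X \neq \emptyset$ I would choose some $s_K \in K \cap X$ and invoke the induction hypothesis inside the induced subconnectoid $(K, \cC \cap \cP(K))$.

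The point to verify is that this recursion is legitimate. The induced subconnectoid on $K$ is connected: any two elements of $K$ lie in a common finite connected set $F \subseteq K$ of $(S, \cC)$, and because $F \subseteq K$ this is also a finite connected set of the subconnectoid, so $K$ is connected there as well. Moreover $s \notin K$, whence $K \cap X \subseteq X \setminus \{s\}$ and $\cardinality{(K \cap X) \cup \{s_K\}} = \cardinality{K \cap X} < \cardinality{Y}$; so the induction hypothesis yields a finite normal tree $T_K$ of $(K, \cC \cap \cP(K))$ rooted at $s_K$ with $V(T_K) = K \cap X$. Applying \cref{prop:extension_normal_tree} to $T_0$ together with the family $(T_K)_K$ then produces a normal tree $T$ of $(S, \cC)$ rooted at $s$ with $T_0 \subseteq T$ and $T - T_0$ the disjoint union of the $T_K$. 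As $X$ is finite, only finitely many components meet $X$, each $T_K$ is finite, and so $V(T) = \{s\} \cup \bigcup_K (K \cap X) = X \cup \{s\}$ is finite; its raylessness also follows from the ``furthermore'' clause of \cref{prop:extension_normal_tree}, since $T_0$ and all $T_K$ are rayless.

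The main obstacle is the role of connectivity. \cref{prop:extension_normal_tree} requires $(S, \cC)$ to be connected, and this is precisely what is needed at the root: the second clause of the definition of a weak normal tree, applied to the root $s$ and any $v$, forces a connected set containing $s$ and $v$, so $X \cup \{s\}$ must lie in a single component of $(S, \cC)$ for such a tree to exist at all. Under this (necessary) condition I would first reduce to the connected case by replacing $(S, \cC)$ with the induced subconnectoid on the component containing $s$, which still contains all of $X$, and then carry out the construction above.
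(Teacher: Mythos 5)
Your proof is correct, and it relies on the same engine as the paper's proof --- \cref{prop:extension_normal_tree} --- but organises the induction differently. The paper simply enumerates $X = \{x_1, \dots, x_n\}$ and applies \cref{prop:extension_normal_tree} $n$ times in a row, at step $m$ hanging the singleton normal tree $\{x_m\}$ onto $T_{m-1}$ (for singleton extensions all hypotheses of the proposition are immediate). You instead apply the proposition once per level of a divide-and-conquer recursion: split $S \setminus \{s\}$ into its components, recurse inside each induced subconnectoid that meets $X$, and hang all the recursively built trees above the root simultaneously. This structural variant needs the extra checks you supply --- that the induced subconnectoid on a component $K$ is connected, that $\cardinality{K \cap X} < \cardinality{X \cup \{s\}}$ so the recursion terminates, and that each $K \in \cK(S \setminus \{s\})$ has finite adhesion to $\{s\}$ --- all of which are correct; the paper's linear iteration avoids them at the cost of being less structural. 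A genuine merit of your write-up is the final paragraph: the corollary as stated carries no connectivity hypothesis, yet \cref{prop:extension_normal_tree} is only stated for connected connectoids, and indeed a normal tree rooted at $s$ with vertex set $X \cup \{s\}$ can only exist when $X$ lies in the component of $s$, by the second clause of the definition of weak normal trees. The paper's one-line proof silently assumes this (and all its applications of the corollary are to connected connectoids); you identify the issue and make the reduction to the connected case explicit, which is exactly the right repair.
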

\begin{proof}
	Enumerate $X:= \Set{x_1, \dots, x_n}$ and set $T_0 := \{s\}$.
	We construct an increasing sequence $(T_m)_{m \in [n]}$ of normal trees with $V(T_m) := \Set{s, x_1, \dots, x_m}$.
	Then $T_n$ is as desired.
	If $T_{m - 1}$ has been constructed, let $T_m$ be the normal tree obtained by applying \cref{prop:extension_normal_tree} to $T_{m - 1}$ and the normal tree $\{x_m\}$.
\end{proof}

\subsection{Proof of \cref{thm:approx}}
The proof of \cref{thm:approx} is based on Kurkofka, Melcher and Pitz' proof of its graph-theoretic counterpart \cite{kurkofka2021approximating}*{Theorem~1}.
As a preparation for this proof we show how ends of subconnectoids relate to the ends of their host connectoid.

\begin{prop}\label{prop:ends_of_subconnectoid}
	Let $(S, \cC)$ be a connectoid and $(S', \cC')$ a subconnectoid of $(S, \cC)$. Then for every end $\omega' \in \Omega(S', \cC')$ there exists an end $\omega \in \Omega(S, \cC)$ such that $\omega' \subseteq \omega$.
	In particular, $K(X \cap S', \omega') \subseteq K(X, \omega)$ holds for every finite set $X \subseteq S$.
\end{prop}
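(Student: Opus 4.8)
The plan is to exploit the inclusion $\cC' \subseteq \cC$ directly: everything that witnesses a necklace or an equivalence of necklaces in the subconnectoid is built from connected sets, and these remain connected in the host connectoid.

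First I would observe that any necklace $N$ of $(S', \cC')$ is also a necklace of $(S, \cC)$. Indeed, if $(H_n)_{n \in \NN}$ is a witness of $N$ in $(S', \cC')$, then each $H_n$ lies in $\cC' \subseteq \cC$, and the defining intersection pattern ``$H_i \cap H_j \neq \emptyset$ if and only if $|i - j| \leq 1$'' is a purely combinatorial condition independent of the ambient connectoid. Hence $(H_n)_{n \in \NN}$ is equally a witness of $N$ in $(S, \cC)$.

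The crucial step is to show that two necklaces $N, N'$ which are equivalent in $(S', \cC')$ remain equivalent in $(S, \cC)$. Here I would invoke \cref{prop:necklace_property}: equivalence of $N$ and $N'$ in $(S', \cC')$ yields an infinite family $(C_n)_{n \in \NN}$ of pairwise disjoint finite connected sets of $(S', \cC')$, each meeting both $N$ and $N'$. Since $\cC' \subseteq \cC$, the same family consists of pairwise disjoint finite connected sets of $(S, \cC)$, each meeting $N$ and $N'$, so \cref{prop:necklace_property} read now in $(S, \cC)$ gives that $N$ and $N'$ are equivalent there. Consequently all necklaces in the equivalence class $\omega'$ are pairwise equivalent in $(S, \cC)$ and thus lie in one common end $\omega$ of $(S, \cC)$; since every necklace of $\omega'$ is a necklace of $\omega$, this yields $\omega' \subseteq \omega$.

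Finally, for the component inclusion I would fix a finite set $X \subseteq S$ and a necklace $N \in \omega' \subseteq \omega$. The set $K(X \cap S', \omega')$ is connected in $(S', \cC')$, hence in $(S, \cC)$ by $\cC' \subseteq \cC$, and it avoids $X$: an element lying in $S'$ but outside $X \cap S'$ cannot lie in $X$. So $K(X \cap S', \omega')$ is contained in a single component of $\cK(S \setminus X)$. To identify that component as $K(X, \omega)$, I would note that the $(X \cap S')$-tail of $N$ (contained in $K(X \cap S', \omega')$) and the $X$-tail of $N$ (contained in $K(X, \omega)$) each contain almost all elements of the infinite set $N$ and therefore share a common element; this forces $K(X \cap S', \omega') \subseteq K(X, \omega)$. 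I expect the whole argument to be essentially routine once \cref{prop:necklace_property} is applied; the only point requiring care is this last component identification, where one must verify that $K(X \cap S', \omega')$ avoids all of $X$ rather than merely $X \cap S'$, and then pin down its $\cK(S \setminus X)$-component through the shared tail element.
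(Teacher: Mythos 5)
Your proof is correct and follows essentially the same route as the paper's: necklaces and their equivalence transfer from $(S', \cC')$ to $(S, \cC)$ because $\cC' \subseteq \cC$, and the component inclusion follows since $K(X \cap S', \omega')$ is $\cC$-connected, avoids $X$, and is pinned to $K(X, \omega)$ via shared tail elements. The only difference is one of detail: where the paper simply asserts that equivalent necklaces of the subconnectoid remain equivalent in the host, you justify this step via \cref{prop:necklace_property}, which is a valid (and arguably cleaner) way to fill in that gap.
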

\begin{proof}
	Every two equivalent necklaces in $(S', \cC')$ are also equivalent in $(S, \cC)$. Therefore every equivalence class $\omega'$ of necklaces in $(S', \cC)$ is contained in an equivalence class $\omega$ of necklaces in $(S, \cC)$.
	Let $X \subseteq S$ be an arbitrary finite set. The connected set $K(X \cap S', \omega') \in \cC'$ avoids $X$ and is therefore contained in a component in $\cK(S \setminus X)$.
	Since $\omega' \subseteq \omega$, $K(X \cap S', \omega') \subseteq K(X, \omega)$ holds.
\end{proof}

\begin{proof}[Proof of \cref{thm:approx}]
Let $\mathcal{K}= \{K(X_\omega, \omega): \omega \in \Omega(S, \cC)\}$ be an arbitrary collection. We call a set $S' \subseteq S$ \emph{bounded} if $S' \subseteq K(X_\omega, \omega)$ for some $\omega \in \Omega(S, \cC)$ and \emph{unbounded} otherwise.

We construct an increasing sequence $(T_n)_{n \in \NN}$ of rayless normal trees of $(S, \cC)$ with common root $r$. Set $T_1:= \{r\}$.
If $T_n$ has been defined for some $n \in \NN$, then we construct $T_{n +1}$ by extending $T_n$ into every unbounded component in $\cK(S \setminus V(T_n))$ finitely.

First of all, we prove that for every unbounded component $K$ in $\cK(S \setminus V(T_n))$ there is a finite set $S_K \subseteq K$ such that $\cK(K \setminus S_K)$ has either at least two unbounded connected sets or else none at all.
Suppose for a contradiction that there is $K \in \cK(S \setminus V(T_n))$ such that exactly one component in $\cK(K \setminus X)$ is unbounded for every finite set $X \subseteq K$. This defines a direction $d$ in the induced subconnectoid $(K, \cC^K)$ on $K$ and by \cref{thm:direction} an end $\omega \in \Omega(K, \cC^K)$.
By \cref{prop:ends_of_subconnectoid}, there exists an end $\omega' \in \Omega(S, \cC)$ such that $\omega \subseteq \omega'$.
Then $d(X_{\omega'} \cap K) = K(X_{\omega'} \cap K, \omega) \subseteq K(X_{\omega'}, \omega')\in \cK$ is bounded, contradicting the definition of $d$.

We define a finite normal tree $T_K$ of $(K, \cC^K)$ as follows:
If there is a finite set $S_K \subseteq K$ such that every component in $\cK(K \setminus S_K)$ is bounded, let $T_K$ be a finite normal tree of the induced subconnectoid $(K, \cK^K)$ on $K$ that contains $S_K$, which exists by \cref{cor:finite_normal_tree}.

Otherwise let $S_K \subseteq K$ be such that $\cK(K \setminus S_K)$ contains at least two unbounded components, which we call $C_K^1$ and $C_K^2$. Let $x_K \in K$ be an arbitrary element. Pick for each element $x$ in the neighbourhood $N_K$ of $K$ in $T_n$ a finite connected set $H_x \subseteq K_x^{T_n}$ that contains $x, x_K$, some element of $C_K^1$ and some element of $C_K^2$.
By \cref{cor:finite_normal_tree}, there is a finite normal tree $T_K$ of $K$ rooted at $x_K$ that contains $S_K \cup \bigcup_{x \in N_K} (H_x \cap K)$.

Every component in $\cK(S \setminus V(T_n))$ has finite adhesion to $V(T_n)$ by \cref{prop:neighbourhood} and \cref{prop:finite_neighbourhood} since $T_n$ is rayless.
Now let $T_{n + 1}$ be the rayless normal tree obtained by applying \cref{prop:extension_normal_tree} to $T_n$ and $T_K$ for all unbounded components $K$ in $\cK(S \setminus V(T_n))$. Finally, we set $T:= \bigcup_{n \in \NN} T_n$.

We show that $T$ is rayless. Suppose for a contradiction that there is a ray $R$ in $T$. Then the ray $R$ is built by extending $T_n$ into an unbounded component $K(n) \in \cK(S \setminus V(T_n))$ for every $n \in \NN$ such that $K(1) \supset K(2) \supset K(3) \supset \dots$, by construction.

Let $n \in \NN$ be arbitrary.
As $K(n+1) \subseteq K(n)$ and $K(n+1)$ avoids $S_{K(n)}$ since $S_{K(n)} \subseteq V(T_{n+1})$, there is a component in $\cK(K(n) \setminus S_{K(n)})$ containing $K(n + 1)$.
The component in $\cK(K(n) \setminus S_{K(n)})$ containing $K(n + 1)$ is unbounded as $K(n + 1)$ is unbounded.
Then there are at least two unbounded components $C_{K(n)}^1, C_{K(n)}^2$ in $\cK(K(n) \setminus S_{K(n)})$ by choice of $S_{K(n)}$.
Thus the tree $T_{K(n)}$ was constructed in the latter way for every $n \in \NN$.
Note that $(x_{K(n)})_{n \in \NN}$ is a strictly $\leq_T$-increasing sequence of vertices in $V(R)$ since $x_{K(n)}$ is the root of $T_{K(n)}$.

By construction, for every $n \in \NN$ there is a connected set $H_n \subseteq K(n)$ containing $x_{K(n)}, x_{K({n + 1})}$, some vertex of $C_{K(n+1)}^{1}$ and some vertex of $C_{K(n+1)}^{2}$ such that $H_n \cap K(n + 1) \subseteq V(T_{n + 2})$.
Thus $H_n \subseteq K(n) \setminus K(n + 2)$ holds since $K(n + 2) \cap V(T_{n + 2}) = \emptyset$.
Therefore $(H_n)_{n \in \NN}$ is the witness of some necklace $N$.
Let $\omega$ be the end containing $N$.

Pick $m \in \NN$ such that the tail $\bigcup_{n \geq m} H_n$ of $N$ and the set $K(m +1 ) \setminus K(m + 2)$ are disjoint to the finite set $X_\omega$.
Let $\epsilon \in \{1,2\}$ such that $C_{K(m+1)}^{\epsilon} \in \cK(K(m+1) \setminus S_{K(m+1)})$ is disjoint to $K(m+2)$, which is possible since $K(m+2)$ is contained in a component in $ \cK(K(m+1) \setminus S_{K(m+1)})$.
This implies that $C_{K(m+1)}^\epsilon \subseteq K(m+1) \setminus K(m+2)$ is disjoint to $X_\omega$. 
By construction, $H_m$ contains an element of $C_{K(m+1)}^\epsilon$ and therefore $C_{K(m+1)}^\epsilon$ is contained in the component $K(X_\omega, \omega) \in \cK(S \setminus X_\omega)$ containing $\bigcup_{n \geq m} H_n$.
Thus $C_{K(m+1)}^\epsilon$ is bounded, a contradiction.
Therefore $T$ is indeed rayless.

It remains to prove that every component in $\cK(S \setminus V(T))$ is bounded. Let $K \in \cK(S \setminus V(T))$ be arbitrary. By \cref{prop:neighbourhood}, the elements of the neighbourhood $N_K$ form a rooted path in $T$. Thus there is $n \in \NN$ such that $K$ is a component in $\cK(S \setminus V(T_n))$. As $T_{n + 1}$ does not contain an element of $K$, the component $K$ is bounded. This finishes the proof.
\end{proof}

\subsection{Ultra-paracompactness}
We deduce from \cref{thm:approx} that the end spaces of connectoids are \emph{ultra-paracompact}, i.e.\ every open cover of an end space can be refined to an open partition. Further, we show that every open subset of an end space is the end space of some connectoid. This implies that every open subset of an end space is ultra-paracompact.

Given a connectoid $(S, \cC)$, a finite set $X \subseteq S$ and some end $\omega \in \Omega(S, \cC)$, we define $\Omega(X, \omega):=  \{ \psi \in \Omega(S, \cC): K(X, \psi) = K(X, \omega) \}$.
\begin{prop}
    Let $(S, \cC)$ be a connected connectoid. For every open cover $U$ of $\Omega(S, \cC)$ there is an open partition of $\Omega(S, \cC)$ that refines $U$.
\end{prop}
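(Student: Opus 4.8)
The plan is to use Theorem~\ref{thm:approx} to extract an open partition directly from the given cover. The statement asks us to refine an arbitrary open cover $U$ of $\Omega(S, \cC)$ into an open \emph{partition}; the natural strategy is to produce, for each end, a small basic open neighbourhood that is contained in some member of $U$, and then to realise these neighbourhoods as the open cells obtained from the components left over by a rayless normal tree.

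First I would set up the right collection to feed into Theorem~\ref{thm:approx}. Every open cover $U$ can be refined so that each end $\omega$ lies in some basic open set $\Omega(X_\omega, \omega)$ with $\Omega(X_\omega, \omega) \subseteq U_\omega$ for some $U_\omega \in U$, using that the sets $\Omega(X, \omega)$ form a neighbourhood basis at $\omega$. Associated to each such basic open set is the component $K(X_\omega, \omega) \in \cK(S \setminus X_\omega)$. This gives precisely a collection $\mathcal{K} = \{K(X_\omega, \omega): \omega \in \Omega(S, \cC)\}$ of the form required by Theorem~\ref{thm:approx}, so there is a rayless normal tree $T$ of $(S, \cC)$ such that every component $K \in \cK(S \setminus V(T))$ is a subset of some $K(X_\omega, \omega) \in \mathcal{K}$.

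Next I would convert the components of $\cK(S \setminus V(T))$ into an open partition of the end space. Since $T$ is rayless and normal, by Theorem~\ref{thm:endfaithfullness_local} it contributes no ends of its own that lie in the closure of $V(T)$, so every end $\omega \in \Omega(S, \cC)$ must ``live in'' some component $K \in \cK(S \setminus V(T))$; concretely, because $V(T)$ is rayless and $T$ is normal, for each end $\omega$ there is a finite set separating $\omega$ from $V(T)$, forcing $\omega$ into exactly one component of $\cK(S \setminus V(T))$. The candidate partition is then $\{ \{\psi \in \Omega(S, \cC) : K(V(T_n), \psi) \subseteq K\} : K \in \cK(S \setminus V(T))\}$, where the membership is witnessed at a finite stage since $N_K$ is a branch by Proposition~\ref{prop:neighbourhood}. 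Each cell is open because it can be written using basic open sets $\Omega(X, \omega)$ with $X$ a suitable finite subset of $V(T)$ capturing the finite adhesion of $K$; the cells are disjoint because distinct components of $\cK(S \setminus V(T))$ are disjoint and an end converges to at most one end's worth of tails; and they cover $\Omega(S, \cC)$ by the raylessness argument above. Finally, each cell refines $U$ because the corresponding component $K$ is a subset of some $K(X_\omega, \omega)$, whence the ends in that cell all lie in $\Omega(X_\omega, \omega) \subseteq U_\omega$.

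\textbf{The main obstacle} will be the step asserting that every end genuinely lands in a single component of $\cK(S \setminus V(T))$ and that this membership is detected by a \emph{finite} truncation $V(T_n)$, so that the resulting cell is actually open in the end-space topology rather than only an intersection of infinitely many basic open sets. The raylessness of $T$ is precisely what makes the neighbourhoods $N_K$ finite (via Proposition~\ref{prop:finite_neighbourhood}), and this finiteness is the crux: it guarantees each component $K$ has finite adhesion to $V(T)$, so the set of ends converging into $K$ equals a single basic open set $\Omega(X, \omega)$ for a finite $X \subseteq V(T)$, securing both openness and the partition property in one stroke.
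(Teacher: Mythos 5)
Your proposal is correct and takes essentially the same route as the paper's proof: reduce to a cover by basic open sets $\Omega(X_\omega,\omega)$, apply \cref{thm:approx} to obtain a rayless normal tree $T$, and use that raylessness forces each neighbourhood $N_K$ to be a finite branch (\cref{prop:neighbourhood}), so that the cells $\{\psi \in \Omega(S,\cC) : K(N_K,\psi) = K\}$ for $K \in \cK(S \setminus V(T))$ form a basic-open partition refining $U$. The only blemish is the stray notation $V(T_n)$ (internal to the proof of \cref{thm:approx} and undefined here); the finite separator you need is exactly $N_K$, as your closing paragraph correctly identifies.
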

\begin{proof}
    We can assume without loss of generality that there is a collection $(X_\omega)_{\omega \in \Omega(S, \cC)}$ of finite subsets of $S$ such that $U = \Set{\Omega(X_\omega, \omega): \omega \in \Omega(S, \cC)}$.
    By \cref{thm:approx} there is a rayless normal tree $T$ of $(S, \cC)$ such that every component in $\cK(S \setminus V(T))$ is contained in some $K(X_\omega, \omega)$.
    Since $T$ is rayless every component $K \in \cK(S \setminus V(T))$ has finite neighbourhood $N_K$ in $T$ by \cref{prop:neighbourhood}.
    Thus the set $\Omega(K):= \{ \omega \in \Omega(S, \cC): K(N_K, \omega) = K \}$ is open.
    Therefore $\{ \Omega(K): K \in \cK(S \setminus V(T))\}$ is an open partition of $\Omega(S, \cC)$.
\end{proof}

Akin to its counterpart for undirected graphs \cite{kurkofka2021approximating}*{Lemma~5.1}, we prove:
\begin{lemma}
    Open subsets of end spaces are again end spaces.
\end{lemma}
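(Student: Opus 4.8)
The plan is to exhibit $O$ as the end space of an \emph{induced subconnectoid} of $(S,\cC)$, obtained by iterating \cref{thm:approx} so as to peel off the part of $S$ that carries the ends of $O$ while discarding the rest. Since $\Omega(S,\cC)=\coprod_{K\in\cK(S)}\Omega(K,\cC^K)$ and a necklace, being connected, lives in a single component of $S$, the whole situation splits over $\cK(S)$; so I may assume $(S,\cC)$ is connected. Using that $O$ is open, I first fix for every $\omega\in O$ a finite set $X_\omega\subseteq S$ with $\Omega(X_\omega,\omega)\subseteq O$. The point of this choice is that $K(X_\omega,\omega)$ then carries no end outside $O$: whenever a component $K\in\cK(S\setminus V(T))$ satisfies $K\subseteq K(X_\omega,\omega)$ for some $\omega\in O$, the ends living in $K$ all lie in $\Omega(X_\omega,\omega)\subseteq O$. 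I call such a $K$ \emph{good}.

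Before the construction I record that for a finite-adhesion component $K\in\cK(S\setminus V(T))$ (which holds for every component whenever $T$ is rayless, by \cref{prop:neighbourhood} and \cref{prop:finite_neighbourhood}) the clopen set $\Omega(K)=\{\psi:K(N_K,\psi)=K\}$ is canonically homeomorphic to the full end space $\Omega(K,\cC^K)$ of the induced subconnectoid on $K$. Indeed, every end $\psi\in\Omega(K)$ has its necklace-tails inside $K$ and so restricts to an end of $(K,\cC^K)$, while conversely every end of $(K,\cC^K)$ extends to one lying in $K$ by \cref{prop:ends_of_subconnectoid}; finite adhesion of $K$ makes the basic open sets of the two spaces correspond.

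The construction now mirrors the proof of \cref{thm:approx}, with ``good'' playing the role of ``bounded''. I build an increasing sequence of rayless normal trees $T_1\subseteq T_2\subseteq\cdots$ of $(S,\cC)$: having $T_n$, I use \cref{cor:finite_normal_tree} and \cref{prop:extension_normal_tree} to extend finitely into every component of $\cK(S\setminus V(T_n))$ that is not good, leaving the good components untouched. Exactly as in the preceding proposition, each stage produces, among its components, a clopen partition of $\Omega(S,\cC)$ into sets $\Omega(K)$. I then set $S'$ to be the union of all good components arising at any stage, with $\cC':=\cC\cap\cP(S')$. These good components are pairwise disjoint and disjoint from every $V(T_n)$ (no tree is extended into a good component), so any connected subset of $S'$ avoids some $V(T_N)$ and hence lies in a single good component; thus $(S',\cC')$ is the disjoint union of the subconnectoids $(K,\cC^K)$, and $\Omega(S',\cC')\cong\coprod_{K\ \mathrm{good}}\Omega(K,\cC^K)\cong\coprod_{K\ \mathrm{good}}\Omega(K)$.

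The main obstacle is to check that the good pieces $\Omega(K)$ partition exactly $O$. That they avoid $\Omega(S,\cC)\setminus O$ is the defining property of goodness, so their union is contained in $O$; a boundary end of $O$ (a complement end that is a limit of $O$) will simply sit in a mixed component at every stage and is correctly omitted. The delicate half is that every $\omega\in O$ must be caught in a good component after \emph{finitely} many stages, and this I would secure by extending the trees fairly: by enumerating $S$ and forcing inclusions through \cref{cor:finite_normal_tree} and \cref{prop:extension_normal_tree}, I arrange that along the branch heading to any fixed end the cumulative vertex set eventually absorbs the finite separator $X_\omega$. Once $X_\omega\subseteq V(T_n)$ on $\omega$'s branch, the partition class of $\omega$ is contained in $K(X_\omega,\omega)$ and is therefore good, so $\omega$ is peeled at stage $n$. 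With this fairness the good pieces form a clopen partition of $O$, and since a partition into clopen sets realises a space as the topological coproduct of its parts, $O\cong\coprod_{K\ \mathrm{good}}\Omega(K,\cC^K)=\Omega(S',\cC')$, as required.
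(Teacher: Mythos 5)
Your strategy---iterating the approximation machinery of \cref{thm:approx} to peel off ``good'' components---is genuinely different from the paper's proof, which instead picks (by Zorn's lemma) a maximal family of disjoint necklaces belonging to ends \emph{outside} the open set, deletes their union, and shows directly that the end space of the resulting induced subconnectoid is homeomorphic to the given open set. Several of your ingredients are sound: goodness of $K$ does imply that all ends living in $K$ lie in $O$; for a component $K$ of finite adhesion the clopen set $\Omega(K)$ is indeed homeomorphic to $\Omega(K,\cC^K)$ (this needs the same injectivity argument, via \cref{prop:necklace_property}, that the paper carries out for its map $\phi$); and $(S',\cC')$ does split as the disjoint union of the good components. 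The genuine gap is exactly at the step you flag as delicate: that every $\omega\in O$ is caught in a good component after finitely many stages. Your mechanism for this, ``fairness by enumerating $S$'', only makes sense when $S$ is countable, whereas the lemma (and the paper's proof) concerns arbitrary connectoids: $S$, the set of ends in $O$, and hence the family of separators $\set{X_\omega}:{\omega\in O}$ may all be uncountable. At a given stage a not-good component $K$ may contain uncountably many ends of $O$ whose separators lie scattered through $K$, while you may absorb only a finite subset of $K$ into $T_{n+1}$; no enumeration distributes this task over countably many stages.

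Note also that the engine driving progress in the proof of \cref{thm:approx} is unavailable relative to $O$: there, for every unbounded component $K$ one finds a finite $S_K\subseteq K$ such that $\cK(K\setminus S_K)$ has zero or at least two unbounded components, and the proof of this dichotomy derives a contradiction from \cref{thm:direction} and \cref{prop:ends_of_subconnectoid} using that \emph{every} end has a prescribed separator. With ``good'' in place of ``bounded'' this argument breaks down precisely when the direction produced by the failure of the dichotomy corresponds to an end outside $O$---and this genuinely happens whenever $O$ is not closed, e.g.\ when the complement of $O$ is a single end that is a limit of ends of $O$. In that situation the not-good components shrink onto the boundary end forever, and whether the $O$-ends inside them all get peeled depends entirely on unguided finite choices (a wrong choice really does strand ends forever). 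For countable $S$ your fairness scheme does repair this: absorbing $\{s_1,\dots,s_n\}\cap K$ at stage $n$ ensures that the component of $\cK(S\setminus V(T_{n+1}))$ containing the tails of $\omega$ avoids $X_\omega$ once $X_\omega\subseteq\{s_1,\dots,s_n\}$, so your proof can be completed under a countability hypothesis. One cannot simply continue the recursion transfinitely either, since the union of the $T_n$ need not be rayless or normal, so finite adhesion and the clopen partition structure are lost at limit steps. Without a cardinality-free replacement for the fairness step, the proposal does not establish the lemma as stated.
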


\begin{proof}
    Let $(S, \cC)$ be a connectoid and $\Gamma \subseteq \Omega(S, \cC)$ be an open subset. We pick a maximal family $\mathcal{N}$ of disjoint necklaces of $\Omega(S, \cC) \setminus \Gamma$, which exists by Zorn's lemma. Set $S':= S \setminus \bigcup \mathcal{N}$ and consider the induced subconnectoid $(S', \cC')$ on $S'$.
    By \cref{prop:ends_of_subconnectoid}, every end $\omega \in \Omega(S', \cC')$ is contained in an end of $(S, \cC)$ and by maximality of $\mathcal{N}$, this end is contained in $\Gamma$.
    We prove that the map $\phi$ sending every $\omega \in \Omega(S', \cC')$ to the unique end of $\Gamma$ containing $\omega$ is a homeomorphism between $\Omega(S', \cC')$ and $\Gamma$.

    Let $\gamma \in \Gamma$ be arbitrary and consider an arbitrary open set $\Omega(Y, \gamma) \subseteq \Gamma$ for some finite set $Y \subseteq S$. We show that $\bigcup \mathcal{N}$ intersects $K(Y, \gamma)$ finitely. As $\Omega(Y, \gamma) \subseteq \Gamma$, no necklace of $\mathcal{N}$ has a tail in $K(Y, \gamma)$. Thus every necklace of $\mathcal{N}$ has finite intersection with $K(Y, \gamma)$. Furthermore, every necklace of $\mathcal{N}$ that intersects $K(Y, \gamma)$ contains an element of $Y$ and therefore there are just finitely many such necklaces. This implies that $K(Y, \gamma) \cap \bigcup \mathcal{N}$ is finite.
    Then $\hat{Y}:= (K(Y, \gamma) \cap \bigcup \mathcal{N}) \cup Y$ is a finite set with the property $K( \hat{Y}, \gamma) \subseteq S'$.
    
    Let $N$ be a necklace of $\gamma$ that avoids $\hat{Y}$. Then $N$ is a necklace in $S'$ and there is $\omega \in \Omega(S', \cC')$ that contains $N$. Thus $\phi(\omega) = \gamma$ holds, which shows that $\phi$ is surjective.    
    Furthermore, $K( \hat{Y}, \gamma)$ is a component in $\cK(S' \setminus (\hat{Y} \cap S'))$ and therefore $\phi^{-1}(\Omega(\hat{Y}, \gamma)) = \Omega(\hat{Y} \cap S', \omega)$ is open. Thus $\phi$ is continuous since $Y$ was chosen arbitrarily.

    Let $\omega, \omega' \in \Omega(S', \cC')$ be ends with $\phi(\omega)= \phi(\omega') = \gamma$. As explained before, there is a finite set $\hat{Y} \subseteq S$ such that $K(\hat{Y}, \gamma) \subseteq S'$. Let $N$ be a necklace of $\omega$ that avoids $\hat{Y}$ and let $N'$ be a necklace of $\omega'$ that avoids $\hat{Y}$. Then $N, N'$ are necklaces of $\gamma$ that are contained in $K(\hat{Y}, \gamma) \subseteq S'$. By \cref{prop:necklace_property}, there are infinitely many disjoint finite connected sets that intersect $N$ and $N'$. Since $\hat{Y}$ is finite, finitely many of these connected sets intersect $\hat{Y}$. Thus almost all of these connected sets are contained in $K(\hat{Y}, \gamma) \subseteq S'$. Therefore $\omega, \omega'$ cannot be finitely separated in $S'$, which implies that $\omega = \omega'$ holds and shows that $\phi$ is injective.

    Finally, let $X \subseteq S'$ be arbitrary.
    Then the open set $\Omega(X \cap S', \gamma)$ gets mapped to the open set $\Omega(X, \gamma)$, which proves that $\phi$ is open.
\end{proof}

\begin{cor}\
    Open subsets of end spaces of connectoids are ultra-paracompact.
\end{cor}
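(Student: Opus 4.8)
The plan is to combine the two preceding results: the lemma that open subsets of end spaces are again end spaces, and the proposition that the end space of a \emph{connected} connectoid is ultra-paracompact. Since the lemma realises an arbitrary open subset $\Gamma$ of an end space $\Omega(S, \cC)$ as the end space $\Omega(S', \cC')$ of some connectoid, it suffices to prove that the end space of every connectoid---connected or not---is ultra-paracompact. The preceding proposition handles only the connected case, so the real work is to remove the connectedness hypothesis.

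To do this I would decompose an arbitrary end space along the components of $S$. For each $K \in \cK(S)$ set $\Omega_K := \{\omega \in \Omega(S, \cC) : K(\emptyset, \omega) = K\}$. First I would observe that these sets partition $\Omega(S, \cC)$: every necklace is connected and hence lies in a single component of $\cK(S)$, so each end is assigned a unique component via $K(\emptyset, \cdot)$. Taking $X = \emptyset$ in the definition of the basic open sets shows that each $\Omega_K$ is open, and since its complement is the union of the remaining $\Omega_{K'}$ it is in fact clopen. Next I would identify $\Omega_K$, as a subspace of $\Omega(S, \cC)$, with the end space $\Omega(K, \cC^K)$ of the induced subconnectoid on $K$: the ends in $\Omega_K$ are exactly the equivalence classes of necklaces contained in $K$, which by \cref{prop:ends_of_subconnectoid} and the consistency of components match the ends of $(K, \cC^K)$, while the basic open sets correspond under this bijection. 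As $(K, \cC^K)$ is connected, the preceding proposition gives that $\Omega_K$ is ultra-paracompact.

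Finally I would glue. Given an open cover $U$ of $\Omega(S, \cC)$, for each component $K$ the family $\{O \cap \Omega_K : O \in U\}$ is an open cover of $\Omega_K$, which refines to an open partition $P_K$ of $\Omega_K$. Because $\Omega_K$ is clopen, each member of $P_K$ is open in the whole space, and since the $\Omega_K$ themselves partition $\Omega(S, \cC)$, the union $\bigcup_{K \in \cK(S)} P_K$ is an open partition refining $U$; this establishes ultra-paracompactness of $\Omega(S, \cC)$, and hence of $\Gamma$. I expect the main obstacle to be the middle step---verifying that each clopen block $\Omega_K$ is genuinely homeomorphic to the end space of the connected subconnectoid $(K, \cC^K)$---since everything else is either immediate from the definitions or a routine disjoint-union argument.
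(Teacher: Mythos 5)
Your proposal is correct, and its overall route is the same as the paper's: the corollary is obtained by combining the lemma (open subsets of end spaces are end spaces) with the proposition on ultra-paracompactness. The paper itself gives no further argument, which leaves a small mismatch implicit: the proposition is stated only for \emph{connected} connectoids, whereas the connectoid $(S', \cC')$ produced by the lemma (an induced subconnectoid on the complement of a family of necklaces) need not be connected. Your middle step -- partitioning $\Omega(S, \cC)$ into the clopen pieces $\Omega_K = \{\omega : K(\emptyset, \omega) = K\}$ for $K \in \cK(S)$, identifying each $\Omega_K$ with the end space of the connected induced subconnectoid $(K, \cC^K)$, and gluing the resulting open partitions -- is exactly the routine argument needed to close that gap, and it works: necklaces are connected, so every end lives in a unique component; connected subsets of $S$ meeting $K$ lie in $K$, so components of $\cK(S \setminus X)$ inside $K$ agree with components of $\cK^K(K \setminus (X \cap K))$, which makes your identification of $\Omega_K$ with $\Omega(K, \cC^K)$ a homeomorphism. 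So your write-up is not just correct but slightly more complete than the paper's own deduction.
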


\section{Characterisation of compactness}\label{sec:compactification}
We characterise the connectoids $(S, \cC)$ whose topological space $|(S, \cC)|$ is compact.
We recall that the topological space $|(S, \cC)|$ is defined on the set $S \cup \Omega(S, \cC)$ by the following basic open sets: for every $s \in S$ the set $\{s\}$ is open and for every end $\omega \in \Omega(S, \cC)$ and every finite set $X \subseteq S$ the set $K(X, \omega) \cup \Omega(X, \omega)$ is open.

We call a connectoid $(S, \cC)$ \emph{solid} if there are finitely many components in $\cK(S \setminus X)$ for every finite set $X \subseteq S$.
\begin{lemma}
	Let $(S, \cC)$ be a connectoid. Then $|(S, \cC)|$ is compact if and only if $(S, \cC)$ is solid.
\end{lemma}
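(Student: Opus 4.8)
The plan is to prove both implications, exploiting that each singleton $\{s\}$ is open, so the points of $S$ form a discrete open subspace whose only possible limit points are ends. For the forward direction I would argue the contrapositive: if $(S,\cC)$ is not solid, I exhibit an infinite closed discrete subspace, which cannot exist in a compact space (a closed subset of a compact space is compact, and a compact discrete space is finite). Fix a finite $X$ with infinitely many components $K_1,K_2,\dots$ in $\cK(S\setminus X)$; note each end $\omega$ is captured by exactly the one component $K(X,\omega)$. I split along a dichotomy. If infinitely many $K_i$ capture some end, pick one captured end $\omega_i$ for each such $i$ and set $E=\{\omega_i\}$; the $\omega_i$ are distinct since the $K_i$ are. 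Any basic neighbourhood $K(X,\psi)\cup\Omega(X,\psi)$ of a point $\psi$ meets $E$ in at most one end, because $\Omega(X,\psi)$ consists of the ends captured by the single component $K(X,\psi)$, which can equal at most one $K_i$; hence $E$ is infinite, closed and discrete. Otherwise infinitely many $K_i$ capture no end; pick $s_i\in K_i$ and set $P=\{s_i\}$. Here the point part of $K(X,\psi)\cup\Omega(X,\psi)$ is the single component $K(X,\psi)$, which captures $\psi$ and so differs from every (end-free) $K_i$; thus such a neighbourhood is disjoint from $P$, making $P$ infinite, closed and discrete. Either way $|(S,\cC)|$ is not compact.

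For the converse assume $(S,\cC)$ is solid and let $\mathcal U$ be an open cover; suppose for contradiction it admits no finite subcover. For a finite $A$ and a component $K\in\cK(S\setminus A)$ put $\hat K:=K\cup\{\omega\in\Omega(S,\cC):K(A,\omega)=K\}$, and observe that $\hat K$ is exactly the basic open set $K(A,\omega)\cup\Omega(A,\omega)$ whenever $K$ captures $\omega$. Call $K$ \emph{bad} if $\hat K$ is not covered by finitely many members of $\mathcal U$. The identity $S\cup\Omega(S,\cC)=A\cup\bigcup_{K\in\cK(S\setminus A)}\hat K$ expresses the whole space as the finite set $A$ together with the \emph{finitely many} sets $\hat K$ (this is where solidity enters). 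Since the whole space is not finitely coverable while $A$ is, at least one component of $S\setminus A$ is bad, for \emph{every} finite $A$.

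Next I would assemble the bad components into an inverse system. For $A\subseteq B$, components of $S\setminus B$ refine those of $S\setminus A$, so each bad component $K'$ of $S\setminus B$ lies in a unique component $K$ of $S\setminus A$, and $\hat{K'}\subseteq\hat K$ forces $K$ to be bad; this gives a well-defined, functorial bonding map $\mathcal B_B\to\mathcal B_A$ between the (finite, nonempty) sets $\mathcal B_A$ of bad components of $S\setminus A$. Conversely, splitting $\hat K$ along the finitely many sub-components of $S\setminus B$ inside $K$, together with the finite set $(B\setminus A)\cap K$, shows that a bad $K$ always contains a bad sub-component, so the maps are even surjective. As the index poset $S^{<\infty}$ is directed under inclusion, the inverse limit of the nonempty finite sets $\mathcal B_A$ is nonempty (standard compactness of inverse limits, i.e.\ a Kőnig/Zorn argument), yielding a choice $A\mapsto d(A)$ of bad component with $d(B)\subseteq d(A)$ whenever $A\subseteq B$; that is, $d$ is a direction of $(S,\cC)$.

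By \cref{thm:direction}, $d=d_\omega$ for some end $\omega$, so $d(A)=K(A,\omega)$ for all finite $A$. Since $\mathcal U$ covers $\omega$, some $U\in\mathcal U$ contains a basic neighbourhood $K(X,\omega)\cup\Omega(X,\omega)=\hat{d(X)}$. But then the single set $U$ covers $\hat{d(X)}$, contradicting that $d(X)$ is bad. Hence $\mathcal U$ has a finite subcover and $|(S,\cC)|$ is compact. I expect the main obstacle to be precisely the backward direction's passage from the merely downward-consistent family of bad components to a genuine, globally consistent direction; the clean remedy is to phrase this as nonemptiness of an inverse limit of finite sets, after which \cref{thm:direction} converts the direction into the end that produces the contradiction.
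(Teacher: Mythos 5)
Your proof is correct, and in the harder (backward) direction it takes a genuinely different route from the paper's. For the forward direction the content is essentially the paper's one-liner: the paper observes that $\cK(S\setminus X)\cup\{\{x\}\colon x\in X\}$, with each component taken together with the ends it captures, is an open partition of $|(S,\cC)|$, which compactness forces to be finite; your dichotomy and closed-discrete-subspace formulation is a slightly longer way of saying the same thing (note that closedness of your sets $E$ and $P$ strictly needs the easily checked fact that $|(S,\cC)|$ is $T_1$, or you can bypass closedness altogether: a finite subcover of your cover by neighbourhoods, each meeting $E$ in at most one point, already contradicts $E$ being infinite). The real divergence is the backward direction. The paper derives it from \cref{thm:approx}: solidity makes every weak normal tree locally finite, so the rayless normal tree supplied by \cref{thm:approx} is finite, and then $\cK(S\setminus V(T))$ is finite with each component (and its ends) inside a cover element. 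You instead run the classical ``bad component'' compactness argument: with no finite subcover, solidity makes each set $\mathcal{B}_A$ of bad components of $S\setminus A$ nonempty and finite, the inverse limit over the directed poset $S^{<\infty}$ is nonempty, its elements are exactly directions consisting of bad components, and \cref{thm:direction} converts the chosen direction into an end whose basic open neighbourhood lies inside a single cover element, contradicting badness. Both arguments are sound, and your refinement/surjectivity observations check out (surjectivity of the bonding maps is not even needed for nonemptiness of the limit). What each buys: your proof is more self-contained, relying only on \cref{thm:direction} rather than on the substantially harder \cref{thm:approx}, and it applies verbatim to disconnected connectoids, whereas the paper's appeal to \cref{thm:approx} (stated for connected connectoids) tacitly requires the extra remark that a solid connectoid has only finitely many components. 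The paper's route, once \cref{thm:approx} is in hand, is shorter and additionally produces a finite normal tree witnessing the finite subcover.
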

\begin{proof}
	If $|(S, \cC)|$ is compact, let $X \subseteq S$ be an arbitrary finite set. The open partition $\cK(S \setminus X) \cup \{ \{x\}: x \in X  \}$ shows that $\cK(S \setminus X)$ is finite. Thus $(S, \cC)$ is solid.
	
	If $(S, \cC)$ is solid, we show that every (weak) normal tree of $(S, \cC)$ is locally finite: Let $T$ be some weak normal tree of $(S, \cC)$ and $t \in T$ arbitrary.
	By the definition of weak normal trees, no two children of $t$ are contained in the same component in $\cK(S \setminus \Down{t}_T)$. Since $\cK(S \setminus \Down{t}_T)$ is finite, $t$ has finitely many children.
	
	Now let $U$ be an arbitrary open cover of $|(S, \cC)|$. For every end $\omega \in \Omega(S, \cC)$ pick a finite set $X_\omega \subseteq S$ such that $K(X_\omega, \omega) \cup \Omega(X_\omega, \omega)$ is contained in an element of $U$.
	We apply \cref{thm:approx} to the collection $\{K(X_\omega, \omega) : \omega \in \Omega(S, \cC))\}$ to obtain a rayless normal tree $T$ such that every component in $\cK(S \setminus V(T))$ is contained in an element of the collection.
	By choice of this collection, each component in $\cK(S \setminus V(T))$ is subset of some element in $U$.
	
	By our observation, $T$ is locally finite. Thus $V(T)$ is finite and in particular $\cK(S \setminus V(T))$ is finite.
	We construct a finite subset $U' \subseteq U$ by picking for each component $K \in \cK(S \setminus V(T))$ an element of $U$ that contains $K$ and for every $t \in T$ an element of $U$ that contains $t$.
	Note that $U'$ covers $|(S, \cC)|$ by construction, which shows that $U'$ is as desired.
\end{proof}

Finally we remark that, akin to locally finite undirected graphs (see e.g.\ \cite{ouborny2023universal}), the topological space $|(S, \cC)|$ of every solid connectoid $(S, \cC)$ with countable ground set $S$ can be represented as an inverse limit of finite connectoids.

\section{Open problems}\label{sec:open_problems}
The end space of a connectoid with a normal spanning tree $T$ is homeomorphic to the end space of the undirected graph $T$, by \cref{thm:endfaithfullness}.
\begin{problem}
	Does there exist for every connectoid $(S, \mathcal{C})$ an undirected graph $G$ such that $\Omega(S, \mathcal{C})$ and $\Omega(G)$ are homeomorphic?
\end{problem}

The star-comb lemma is a fundamental tool for the investigation of infinite undirected graphs~\cite{DiestelBook2016}.
It proves the existence of a certain connected substructure, i.e.\ either a star or a comb, in every infinite connected undirected graph~\cite{DiestelBook2016}*{Lemma~8.2.2}.
We raise the question if the star-comb lemma can be generalised to connectoids:
\begin{problem}
	Is there a finite list $\mathcal{L}$ of coarse connected structures 
	such that for every connectoid $(S, \cC)$ and every infinite set $S' \subseteq S$ an element of $\mathcal{L}$ is a substructure of $(S, \cC)$ containing infinitely many elements of $S'$?
\end{problem}
\noindent
It seems plausible to demand that necklaces form an element of $\mathcal{L}$ as rays do in the star-comb lemma.

\section*{Acknowledgements}

The second author gratefully acknowledges support by a doctoral scholarship of the Studienstiftung des deutschen Volkes.

\bibliography{ref.bib}

\end{document}